\documentclass[11pt]{article}

\usepackage[english]{babel}
\usepackage{amsmath,amssymb,amsthm,amsfonts}
\usepackage{xcolor}

\usepackage[shortlabels]{enumitem}
\usepackage[numbers,sort&compress]{natbib}
\usepackage[a4paper,total={6in,9in}]{geometry}
\usepackage[colorlinks=true,linkcolor=black,citecolor=black,urlcolor=blue]{hyperref}
\usepackage{mathrsfs} 
\numberwithin{equation}{section}
\usepackage{comment}
\newtheorem{theorem}{Theorem}[section]
\newtheorem{lemma}[theorem]{Lemma}
\newtheorem{proposition}[theorem]{Proposition}
\newtheorem{corollary}[theorem]{Corollary}
\usepackage{authblk}
\theoremstyle{definition}
\newtheorem{definition}[theorem]{Definition}
\newtheorem{remark}[theorem]{Remark}
\newtheorem{ass}[theorem]{Assumption}
\newcommand{\C}{\mathbb C}

\newcommand{\Z}{\mathbb Z}
\newcommand{\T}{\mathbb T}

\newcommand{\cL}{\mathcal L}

\newcommand{\cO}{\mathcal O}
\newcommand{\cX}{\mathcal X}

\newcommand{\capc}{\operatorname{cap}}

\hypersetup{
  pdftitle={Extremal Functions and Widom Factors on Compact Riemann Surfaces},
  pdfauthor={Sampad Lahiry}
}

\usepackage[colorlinks=true,linkcolor=black,citecolor=black,urlcolor=blue]{hyperref}

\begin{document}

\title{Extremal Functions and Widom Factors on Compact Riemann Surfaces}
\author{Sampad Lahiry}
\affil{
Department of Mathematics, KU Leuven, Celestijnenlaan
200 B bus 2400, 3001 Leuven,  Belgium}
\affil{School of Mathematics and Statistics, The University of Melbourne, Victoria 3010, Australia}
\affil{ sampad.lahiry@kuleuven.be}
\date{}
\maketitle
\begin{abstract}
We study the Chebyshev extremal problem on a compact Riemann surface
$X$ of genus $g>0$. As an analog to monic polynomials, we consider
admissible meromorphic  functions having no poles away from a marked
point $P_{\infty}$ (with adequate normalisation).  For a nonpolar compact set
$E\subset X\setminus\{P_{\infty}\}$, we show that the $n$-th root of
the Chebyshev constant $t_n(E)$ converges to the capacity of $E$, and
we establish the corresponding Bernstein--Walsh inequality.

In the second part of the paper, using a Cauchy kernel adapted to Riemann surfaces, we study the refined
Szeg\H{o}--Widom asymptotics for the extremals as well as the Widom
factors
\[
    W_n(E)=\frac{t_n(E)}{\operatorname{cap}(E)^n},
\]
assuming that $E$ is a finite union of $p$ closed discs with analytic
boundaries. The geometry of the Schottky double, which has genus
$2g+p-1$, enters explicitly into these asymptotics. We find that there is no analogue of Faber-type asymptotics even for one single boundary curve. We conclude by constructing  explicit examples in genus $1$ using the Weierstrass-$\wp$ function.
\end{abstract}
\tableofcontents
\section{Introduction}

\subsection{The Chebyshev problem in the plane}

In the memoir of 1854, Chebyshev
\cite{Chebyshev1854} was led to ask which real polynomial of degree $n$ with leading
coefficient $1$ deviates least from zero on $[-1,1]$. The answer is
\[
  2^{1-n}\cos\bigl(n\arccos x\bigr),
\]
and the minimal deviation is $2^{1-n}$. The same  question can be posed for a  compact set $E$.
For compact $E\subset\mathbb C$ write $\|f\|_E=\sup_E|f|$, let $\mathcal P_{n-1}$ denote the
polynomials of degree at most $n-1$ for $n\geq 1$, and set
\begin{equation}\label{eq:tn-plane}
  t_n(E)\;:=\;\min_{p\in\mathcal P_{n-1}}\bigl\|z^n+p\bigr\|_E .
\end{equation}
If $E$ is compact and non polar the minimum is attained by a unique monic polynomial $T_n$. It was Faber
\cite{Faber1919} who posed the problem in this generality and who attached Chebyshev's name to
the extremal polynomials of a general set.

    Closed formulas are rare \cite{MDR}. For a closed disc of radius $r$, centered at $a$, one has $T_n(z)=(z-a)^n$ and
$t_n=r^n$; for a lemniscate $E=\{|P|\le\rho\}$ with $P$ monic of degree $k$, Faber observed that
$T_{km}=P^m$ for every $m\ge 1$; and an ellipse has the same Chebyshev polynomials as the segment
joining its foci. Outside such algebraic coincidences one has to be content with asymptotics, and
this is where potential theory enters.

\subsection{Capacity and the theorem of Fekete and Szeg\H{o}}

Fekete \cite{Fekete1923} introduced the transfinite diameter: with
$\delta_n(E)$ the $n$-th diameter computed from an $n$-point Fekete set, the sequence
$(\delta_n(E))_n$ is non-increasing, and has a  limit $d(E)$ . Szeg\H{o} \cite{Szego1924}, building on \cite{Faber1919,Fekete1923}, identified $d(E)$ with
the Chebyshev constant $\lim_n t_n(E)^{1/n}$ and with the logarithmic capacity
$\operatorname{cap}(E)=e^{-V_E}$, $V_E$ the Robin constant. The resulting three-fold identity,
which Saff \cite{SaffSurvey} calls the fundamental theorem of classical potential theory, contains
in particular the root asymptotic
\begin{equation}\label{eq:root-plane}
  \lim_{n\to\infty}t_n(E)^{1/n}=\operatorname{cap}(E) ,
\end{equation}
valid for every non-polar compact $E\subset\mathbb C$; see \cite[Ch.~5]{Ransford} or
\cite[Ch.~I, III]{SaffTotik}.

The two halves of \eqref{eq:root-plane} are of quite different character. The lower bound is soft \cite{CSZ3}.
The set $E$ is contained in the lemniscate $\{|T_n|\le t_n(E)\}$, whose capacity is
$t_n(E)^{1/n}$, and monotonicity  gives Szeg\H{o}'s inequality
\begin{equation}\label{eq:szego}
  t_n(E)\;\ge\;\operatorname{cap}(E)^n ,\qquad n\ge 1;
\end{equation}
equivalently, one may argue from the Bernstein--Walsh inequality
$|P(z)|\le\|P\|_E\,e^{n g_E(z)}$. The upper bound is more geometric, and the classical
route runs through Fekete points: if $q_1,\dots,q_n$ is an $n$-point Fekete set for $E$, then the
monic polynomial $\prod_{j}(z-q_j)$ already has the correct exponential size on $E$, because the
Fekete property compares $\delta_{n+1}(E)$ with $\delta_n(E)$. It is this proof, that we shall transplant to a Riemann surface.

\subsection{Widom factors}

In view of \eqref{eq:szego} it is natural to normalise and to study the \emph{Widom factors}
\begin{equation}\label{eq:widomfactor-plane}
  W_n(E)\;:=\;\frac{t_n(E)}{\operatorname{cap}(E)^n}\;\ge\;1 ,
\end{equation}
a name introduced by Goncharov and Hatino\u{g}lu \cite{GoncharovHatinoglu}. For a disc $W_n\equiv 1$; for a lemniscate of a degree $k$ polynomial $W_{km}=1$; since
$\operatorname{cap}([-1,1])=\tfrac12$ and $t_n=2^{1-n}$, the interval has $W_n\equiv 2$. At the
other extreme the Widom factors may be unbounded: this happens for the
Julia set of $(z-\lambda)^2$ with $\lambda>2$ \cite{BGH}, and Goncharov and Hatino\u{g}lu
\cite{GoncharovHatinoglu} construct Cantor-type sets along which $W_n$ grows subexponentially at
any prescribed rate. Whether the Widom factors of a compact set with finitely many components can
be unbounded is, at present, not known; see \cite{CSZ4} and the discussion in \cite{Rubin}.

 Christiansen, Simon and
Zinchenko \cite{CSZ3} showed that $W_{n_0}(E)=1$ for a single index $n_0$ precisely when the outer
boundary of $E$ is the lemniscate $\{|P|=1\}$ of a polynomial of degree $n_0$. On the real line the
true lower bound is twice as large. Schiefermayr \cite{Schiefermayr} proved
\begin{equation}\label{eq:schiefermayr}
  t_n(E)\;\ge\;2\operatorname{cap}(E)^n ,\qquad E\subset\mathbb R,
\end{equation}
and here equality holds for a given $n$ exactly when $E=P^{-1}([-1,1])$ for a polynomial $P$ of
degree $n$ \cite{Totik2014,CSZ3}, while $\lim_n W_n(E)=2$ forces $E$ to be an interval
\cite{Totik2014}.

\subsection{Widom's theory}

For one Jordan region with analytic boundary the asymptotics were settled by Faber
\cite{Faber1919}. Let $\Omega=(\mathbb C\cup\{\infty\})\setminus E$ and let $B$ be the conformal
map of $\Omega$ onto the unit disc normalised by $B(z)=\operatorname{cap}(E)/z+O(z^{-2})$ at
infinity. Then $W_n(E)\to1$ and
\begin{equation}\label{eq:faber}
  \frac{T_n(z)\,B(z)^n}{\operatorname{cap}(E)^n}\longrightarrow 1
\end{equation}
uniformly on  $\overline\Omega$. As soon as $E$ has more than one component
this breaks down, and for a structural reason: the map $B$ no longer exists as a function.

Widom's memoir \cite{Widom69} is the definitive treatment of the multiply connected case, and it
is the model for the second half of the present paper. Let $E$ be a finite union of $p$ disjoint
Jordan regions with $C^{2p+1+}$ boundary. In place of the Riemann map one takes the multivalued
$B$ determined by $|B|=e^{-g_E}$ and by the normalisation at $\infty$; continuation of $B$ around
a closed curve multiplies it by a unimodular constant, so $B$ is a section of a flat unitary line
bundle rather than a function, and the associated character $\chi_E\in\pi_1(\Omega)^{*}$ records
the harmonic measures of the components. For each character $\chi$ and $H^{\infty}(\Omega,\chi)$ defined to be the space of  holomorphic sections on $\Omega$, Widom introduces the extremal
problem
\begin{equation}\label{eq:widom-min}
  \mu(\chi)\;=\;\inf\bigl\{\|F\|_\infty:\;F\in H^\infty(\Omega,\chi),\;F(\infty)=1\bigr\},
\end{equation}
solved by a unique \emph{Widom minimiser} $F_\chi$, and proves \cite[Thm.~8.3]{Widom69} that
\begin{equation}\label{eq:SW-plane}
  W_n(E)-\mu\bigl(\chi_E^{\,n}\bigr)\longrightarrow 0,
  \qquad
  \frac{T_n(z)B(z)^n}{\operatorname{cap}(E)^n}-F_{\chi_E^{\,n}}(z)\longrightarrow 0 ,
\end{equation}
together with the corresponding statements for a positive 
weight on $\partial E$ in the class of $C^{2p-1+}$. This is
what is now called Szeg\H{o}--Widom asymptotics. The Widom factors therefore need not converge;
they follow the almost periodic sequence $n\mapsto\mu(\chi_E^{\,n})$, whose set of limit points is
a finite union of closed intervals, reducing to finitely many points exactly when $\chi_E$ has
finite order, that is, when the harmonic measures of the components at $\infty$ are rational.
Akhiezer \cite{Achieser1932} had already found this dichotomy for two intervals. Widom's proof
rests on a duality between the $H^\infty$ minimum \eqref{eq:widom-min} and an $H^1$ residue maximum
\cite[\S5]{Widom69}, on a product formula for $F_\chi$ in terms of Green's functions, and on two
systems of transcendental equations locating the $p-1$ zeros entering that formula. Each of these
has an intrinsic counterpart below.

The natural finiteness condition here is due to Parreau \cite{Parreau}, who met it on Riemann
surfaces, and was later named after Parreau and Widom by Hasumi \cite{Hasumi}: a regular compact
set is a \emph{Parreau--Widom set} when
$\mathrm{PW}(E)=\sum_{\nabla g_E(c)=0}g_E(c)<\infty$. Widom \cite{Widom71a} proved that this holds
if and only if $H^\infty(\Omega,\chi)\ne 0$ for every character $\chi$, which is what makes
\eqref{eq:widom-min} a non-degenerate problem in the first place. Christiansen, Simon and Zinchenko
\cite{CSZ1} proved the Totik--Widom bound $W_n(E)\le 2\exp[\mathrm{PW}(E)]$ for Parreau--Widom
subsets of $\mathbb R$, and in the same paper settled the conjecture of \cite{Widom69} that finite
gap subsets of $\mathbb R$ enjoy Szeg\H{o}--Widom asymptotics; with Yuditskii \cite{CSYZ2} they
extended this to Parreau--Widom sets. Widom \cite{Widom71b} had
asked whether $F_\chi$ depends continuously on $\chi$, and it is addressed by works of Hayashi and Hasumi \cite{Hayashi,Hasumi}. In the plane,
Totik--Widom bounds are known for quasidiscs and quasiconformal arcs
\cite{TotikVarga,Andrievskii,AndrievskiiNazarov}, and with no boundary regularity whatsoever one
still has $W_n(E)=O(\log n)$ for sets with finitely many components \cite{Andrievskii}. A survey of
the whole circle of ideas is \cite{CSZreview}.

\subsection{Arcs}

The case of arcs is different, and instructive. Widom treated it in \cite[\S11]{Widom69}, obtained
the asymptotic upper bound $\limsup_n W_n(E)\le 2\exp[\mathrm{PW}(E)]$ when a component of $E$ is
an arc,  sharp for finite unions of intervals, where he also proved the asymptotics of the norms
\cite[Thm.~11.5]{Widom69} --- and conjectured that the corresponding Szeg\H{o}--Widom asymptotics
holds, with a single arc behaving like an interval. Both expectations turned out to be wrong.
Thiran and Detaille \cite{ThiranDetaille} computed the circular arc
$\Gamma_\alpha=\{e^{i\theta}:|\theta|\le\alpha\}$ and found
\begin{equation}\label{eq:arc}
  \lim_{n\to\infty}W_n(\Gamma_\alpha)\;=\;2\cos^2(\alpha/4)\;=\;1+\cos(\alpha/2),
\end{equation}
which decreases from $2$ to $1$ as the arc closes up into a circle, so that every value in
$(1,2)$ occurs. Totik and Yuditskii \cite{TotikYuditskii} disproved the conjecture in its general
form. Arcs are not treated in this paper: the sets we consider in the second
half are finite unions of closed discs, which is the surface analogue of Widom's system of curves. We refer to \cite{Alpan,CSZreview,Eichinger,Totik2014} for more results in this direction.

\section{Statement of results}
\subsection{The Green kernel and the capacity on a compact surface}\label{ss:potential}
 
All of the above lives on the Riemann sphere. The same questions  in \eqref{eq:tn-plane} or
\eqref{eq:root-plane} can be posed in a compact Riemann surface, and the object of this paper is to say what both
sides of \eqref{eq:root-plane} mean on a compact Riemann surface and to prove that they still agree.

Throughout, $X$ is a compact Riemann surface of genus $g$, the point $P_\infty\in X$ is fixed, and
$\zeta$ is a fixed local coordinate centred at $P_\infty$.  If $u$ is harmonic on $X$ away from finitely many points and
$u=-c_j\log|z_j|+O(1)$ at $p_j$, then $\Delta u=-2\pi\sum_j c_j\delta_{p_j}$, while
$\int_X\Delta u=0$ by Stokes; hence $\sum_j c_j=0$. A Green function with a single logarithmic
pole therefore does not exist, and every kernel we might use has to carry a compensating logarithmic singularity
somewhere. We choose this to be $P_\infty$.
 
\begin{definition}\cite{BGK23}\label{def:green}
The \emph{normalised bipolar Green kernel with pole at $P_\infty$} is the unique symmetric function
\[
  G=G_{P_\infty}:\ \bigl(X\setminus\{P_\infty\}\bigr)^2\setminus\{p=q\}\longrightarrow\mathbb R
\]
with the following properties:
\begin{enumerate}
  \item[(i)]   for fixed $q\ne P_\infty$, the function $p\mapsto G(p,q)$ is harmonic on
               $X\setminus\{q,P_\infty\}$;
  \item[(ii)]  if $z$ is a local coordinate at $q$, then
               $G(p,q)=-\log|z(p)|+O(1)$ as $p\to q$;
  \item[(iii)] at $P_\infty$, \ $G(p,q)=\log|\zeta(p)|+o(1)$ as $p\to P_\infty$;
  \item[(iv)]  $G(p,q)=G(q,p)$.
\end{enumerate}
\end{definition}
 
The charges are $+1$ at $q$ and $-1$ at $P_\infty$, as they must be. Existence is classical:
integrate the third-kind differential with residues $+1$ at $q$ and $-1$ at $P_\infty$ and purely
imaginary periods, or read $G$ with theta functions and  prime form \cite{FarkasKra,Fay}. For
$X=\widehat{\mathbb C}$, $P_\infty=\infty$ and $\zeta=1/z$ one recovers the logarithmic kernel
\[
  G(z,w)=\log\frac1{|z-w|}.
\] At this point, note (iii).
We point out the Bipolar Green's kernel is unique, and $o(1)$ in item (iii) is possible only due the fact that the local coordinate $\zeta$ at $P_\infty$ is fixed, see \cite[Lemma 4.1]{Kui25}
 
Let now $E\subset X\setminus\{P_\infty\}$ be compact and non-polar, the  classical
logarithmic potential theory (that we require for what follows) transfers verbatim \cite{BGK23}. Writing $\mathcal M(E)$ for the probability measures
on $E$ and
\[
  U^{\mu}(p):=\int_E G(p,q)\,d\mu(q),
  \qquad
  I(\mu):=\iint_{E\times E} G(p,q)\,d\mu(p)\,d\mu(q),
\]
the Robin constant, the capacity and the equilibrium measure are
\begin{equation}\label{eq:cap}
  V_E:=\inf_{\mu\in\mathcal M(E)}I(\mu),
  \qquad
  \operatorname{cap}(E):=e^{-V_E},
  \qquad
  \mu_E:=\text{the minimiser} .
\end{equation}
Frostman's theorem for the kernel $G$ then produces the equilibrium Green function
$g_E:=V_E-U^{\mu_E}$: it is non-negative, vanishes quasi-everywhere on $E$, and is harmonic on
$X\setminus(E\cup\{P_\infty\})$. Since $\mu_E$ has total mass one, (iii) integrates to
$U^{\mu_E}(p)=\log|\zeta(p)|+o(1)$, and therefore
\begin{equation}\label{eq:gE-at-Pinfty}
  g_E(p)=-\log|\zeta(p)|-\log\operatorname{cap}(E)+o(1),\qquad p\to P_\infty.
\end{equation}
So $g_E$ is the  Green function of $\Omega:=X\setminus E$
with pole at $P_\infty$, and \eqref{eq:gE-at-Pinfty} is the exact analogue of
$g_E(z)=\log|z|-\log\operatorname{cap}(E)+o(1)$ in the plane; it is the expansion that makes
$\operatorname{cap}(E)$ appear in the Bernstein--Walsh inequality on $X$.
Balayage gives in the same way a Green function $g_E(\cdot,a)$ with pole at an arbitrary
$a\in\Omega\setminus\{P_\infty\}$, Let \(a\in X\setminus E\), \(a\ne P_\infty\).  Let \(\omega_a\) be the
balayage of the point mass \(\delta_a\) onto \(E\), characterized by
\[
        U^{\omega_a}(p)=G(p,a)
        \qquad\text{for q.e. }p\in E .
\]
Set
\begin{equation}\label{eq:gp}
g_E(p,a):=G(p,a)-U^{\omega_a}(p),
        \qquad p\in X\setminus E .
\end{equation}
Then \(g_E(\cdot,a)\ge0\), \(g_E(\cdot,a)=0\) quasi-everywhere on \(E\), and
\(g_E(\cdot,a)\) is harmonic on
$X\setminus(E\cup\{a\}).$
Moreover, if \(z\) is a local coordinate centered at \(a\), then
\[
        g_E(p,a)=-\log|z(p)|+O(1),
        \qquad p\to a.
\]
At \(P_\infty\), the singularities cancel, and \(g_E(\cdot,a)\) is harmonic.
Thus \(g_E(\cdot,a)\) is the Dirichlet Green function of
$ \Omega:=X\setminus E$
with a logarithmic singularity \(a\).

\subsection{The extremal problem on a compact Riemann surface}
 
A monic polynomial of degree $n$ is a meromorphic function on $\widehat{\mathbb C}$ whose only pole
is at $\infty$, of order exactly $n$, normalised so that its leading Laurent coefficient in the
coordinate $\zeta=1/z$ equals one. We can use this notion on a compact Riemann surface.  As before fix $P_\infty\in X$ and a local coordinate
$\zeta$ centered at $P_\infty$, and let $L(NP_\infty)$ denote the space of meromorphic functions on
$X$ with no pole outside $P_\infty$ and pole order at most $N$ there.  In recent literature \cite{AMF26, Ber23, DL26} they have been referred to as polynomials on a Riemann surface. Call $f\in L(NP_\infty)$
\emph{admissible of order $N$} if
\[
  f(p)=\zeta(p)^{-N}+O\bigl(\zeta(p)^{-N+1}\bigr),\qquad p\to P_\infty ,
\]
and for compact non-polar $E\subset X\setminus\{P_\infty\}$ put
\begin{equation}\label{eq:def1}
   \hspace{-0.3cm}  t_N(E,P_\infty):=\inf\bigl\{\|f\|_E:\ f\ \text{admissible of order}\ N\bigr\},
  \qquad
  W_N(E,P_\infty):=\frac{t_N(E,P_\infty)}{\operatorname{cap}(E)^N} ,
\end{equation}

the capacity being computed from the bipolar Green kernel normalised at $P_\infty$
(Definition~\ref{def:green}). For $X=\widehat{\mathbb C}$, $P_\infty=\infty$ and $\zeta=1/z$ this
is exactly \eqref{eq:tn-plane} and \eqref{eq:widomfactor-plane}.

Two features of the surface problem are visible already in the definition. First, admissible
functions need not exist. Riemann--Roch \cite{FarkasKra} gives
$\dim L(NP_\infty)=N-g+1$ for $N\ge 2g$, so admissible functions of every order $N\ge2g$ do exist
and the asymptotic statements below are unaffected. Second, and more consequentially, there is no
factorisation into linear factors. On $\widehat{\mathbb C}$ the function $z-q$ has divisor
$q-\infty$ and modulus $e^{-G(z,q)}$; on $X$ a meromorphic function with divisor $q-P_\infty$
would exhibit $X$ as a one-sheeted cover of the sphere, so for $g\ge1$ and $q\ne P_\infty$ no such
function exists. What survives is a section of a flat bundle, and the whole difficulty of the proof
is that the bundle depends on $q$. Our first result is that the root asymptotic is nevertheless
untouched.

\begin{theorem}\label{thm:main-root}
Let $X$ be a compact Riemann surface, let $P_\infty\in X$, and let $E\subset X\setminus\{P_\infty\}$
be compact and non-polar. With the normalisation of Definition~\ref{def:green},
\[
  \lim_{N\to\infty}t_N(E,P_\infty)^{1/N}=\operatorname{cap}(E),
  \qquad\text{equivalently}\qquad
  \lim_{N\to\infty}W_N(E,P_\infty)^{1/N}=1 .
\]
\end{theorem}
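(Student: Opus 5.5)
The plan is to prove the two inequalities $\liminf_N t_N^{1/N}\ge\operatorname{cap}(E)$ and $\limsup_N t_N^{1/N}\le\operatorname{cap}(E)$ separately. The lower bound is a Bernstein--Walsh argument, which in fact gives Szeg\H{o}'s inequality $t_N\ge\operatorname{cap}(E)^N$ for every $N$. The upper bound transplants the Fekete-point construction. The lack of linear factors is repaired by adding a bounded number of auxiliary zeros, placed via the Abel--Jacobi map.

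\emph{Lower bound.} Let $f$ be admissible of order $N$, and let $\Omega_\infty$ be the component of $X\setminus E$ containing $P_\infty$. Put
\[
u=\log|f|-\log\|f\|_E-N g_E .
\]
This function is subharmonic on $\Omega_\infty\setminus\{P_\infty\}$. Near $P_\infty$ we have $\log|f|=-N\log|\zeta|+o(1)$, and \eqref{eq:gE-at-Pinfty} gives $-Ng_E=N\log|\zeta|+N\log\operatorname{cap}(E)+o(1)$. Hence $u$ is bounded there and extends subharmonically across $P_\infty$, with
\[
u(P_\infty)=N\log\operatorname{cap}(E)-\log\|f\|_E .
\]
On $\partial\Omega_\infty$ we have $\limsup u\le0$ quasi-everywhere, because $g_E=0$ q.e.\ on $E$ and $u$ is bounded above. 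The extended maximum principle (polar exceptional sets are removable for bounded-above subharmonic functions) then gives $u\le0$ on $\Omega_\infty$. Evaluating at $P_\infty$ yields $\|f\|_E\ge\operatorname{cap}(E)^N$, and taking the infimum over $f$ gives $t_N(E,P_\infty)\ge\operatorname{cap}(E)^N$.

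\emph{Upper bound.} Let $q_1,\dots,q_n$ be an $n$-point Fekete set for $E$, i.e.\ a minimiser of $\sum_{j\ne k}G(q_j,q_k)$. Since $E$ is compact and avoids $P_\infty$, $G$ is bounded below on $E\times E$, say $G\ge -C_E$. The standard Fekete argument then gives
\[
\sum_j G(p,q_j)\ge (n-1)\,\frac{\delta_n}{n}-C_E
\qquad\text{for all }p\in E,
\]
where $\delta_n/(n(n-1))\to V_E$ is the normalised Fekete energy. To see this, replace $q_k$ by $p$ and use minimality, which gives $\sum_{j\ne k}G(p,q_j)\ge\sum_{j\ne k}G(q_k,q_j)$; then average over $k$ and add back the term $G(p,q_k)\ge -C_E$. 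The convergence of the Fekete energies to $V_E$ is the usual $\Gamma$-convergence/weak-$*$ argument for the kernel $G$, which transfers verbatim \cite{BGK23}. Consequently $e^{-\sum_jG(p,q_j)}\le \operatorname{cap}(E)^{n(1+o(1))}$ uniformly on $E$.

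The function $e^{-\sum_j G(\cdot,q_j)}$ is in general not the modulus of a meromorphic function, because the divisor $\sum_j q_j-nP_\infty$ need not be principal. This is the main obstacle. To handle it, fix a small closed disc $K\subset X\setminus(E\cup\{P_\infty\})$. The image of $\mathrm{Sym}^g(K)$ under the Abel--Jacobi map $A$ (based at $P_\infty$) contains an open set, since the differential of $A$ is generically of full rank. The iterated sums of this image therefore exhaust the compact connected group $\mathrm{Jac}(X)$ after finitely many steps. Hence there is a fixed $M$, independent of $n$, such that every class is $A(D)$ for some effective divisor $D=r_1+\dots+r_M$ with all $r_k\in K$.

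Choose $D$ so that $A(D)=-\sum_jA(q_j)$. By Abel's theorem there is then $f\in L((n+M)P_\infty)$ with divisor $\sum_j q_j+D-(n+M)P_\infty$. The function $\log|f|+\sum_jG(\cdot,q_j)+\sum_kG(\cdot,r_k)$ is harmonic on all of $X$, so it is constant. By (iii) of Definition~\ref{def:green}, after multiplying $f$ by a constant this constant is $0$ and $f$ is admissible of order $N=n+M$. Because $G$ is bounded below on $E\times K$, say by $-C_K$, we obtain
\[
\|f\|_E\le e^{MC_K}\,\operatorname{cap}(E)^{n(1+o(1))}.
\]
Since $M$ is fixed and $N=n+M$ runs through all large integers as $n$ does, taking $N$-th roots gives $\limsup_N t_N^{1/N}\le\operatorname{cap}(E)$. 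Combined with the lower bound, this proves the theorem.
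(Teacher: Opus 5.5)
Your proof is correct. The lower bound is the paper's Bernstein--Walsh argument (Theorem~\ref{thm:surface-BW}). Your upper bound, however, replaces the paper's way around the missing linear factors with a genuinely different device.

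The paper works with flat unitary bundles. It builds Green factors $B_q$ with $|B_q|=e^{-G(\cdot,q)}$ as sections of $\mathcal L_{\chi_q}$ (Lemma~\ref{lem:compact-green-factor}). It then multiplies the Fekete product $B_{D_n}$ by a monic section $s_n$ of $\mathcal L_{\chi_{D_n}^{-1}}\otimes\mathcal O(mP_\infty)$, $m\ge 2g$, whose sup on $E$ is bounded independently of the character (Lemma~\ref{lem:uniform-correction}). That uniformity costs Riemann--Roch for twisted bundles, a holomorphic choice of bases as the character varies, and compactness of the unitary character torus.

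You stay with scalar functions. Abel's theorem, local surjectivity of the Abel--Jacobi map on $\mathrm{Sym}^g$ of a small disc $K$ off $E$, and compactness of $\mathrm{Jac}(X)$ let you add a fixed number $M$ of auxiliary zeros in $K$. Then $|f|$ is exactly $e^{-\sum_j G(\cdot,q_j)-\sum_k G(\cdot,r_k)}$, and the bounded factor $e^{MC_K}$ comes simply from continuity of $G$ on $E\times K$.

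The two constructions are the same object seen from opposite sides: the $m$ zeros of $s_n$ play the role of your divisor $D$. The paper controls their effect on $E$ by a uniform norm bound and leaves their location free; you control their location. The paper's route gives an explicit shift $m=2g$ and sets up the flat-bundle language reused later for the Widom theory. Yours is more elementary but leaves $M$ inexplicit, which is harmless after taking $N$-th roots.

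Your Fekete step also differs. You use one-point replacement and averaging, which needs only convergence of the Fekete energies and $G\ge -C_E$ on $E\times E$. The paper instead compares with an $(n+1)$-point Fekete set and uses monotonicity of $\delta_n$.

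Two small points:
\begin{itemize}
\item Averaging gives $\sum_jG(p,q_j)\ge\delta_n/n-C_E$, not $(n-1)\delta_n/n-C_E$. The former is what your next line actually uses.
\item In the lower bound you need neither the quasi-everywhere vanishing of $g_E$ nor the extended maximum principle. Since $g_E\ge0$, you already get $\limsup u\le0$ at every point of $\partial\Omega_\infty$.
\end{itemize}
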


The proof follows the Fekete route, and the two obstructions above are what has to be dealt with.
The analogue of the factor $z-q$ is a multivalued function  $B_q$ with divisor $q-P_\infty$ and
$|B_q|=e^{-G(\cdot,q)}$; it exists, but only as a meromorphic section of a flat unitary line
bundle $L_{\chi_q}$ (Lemma~\ref{lem:compact-green-factor}). A Fekete product $B_{D_n}$ over an $n$-point
Fekete divisor $D_n$ is therefore a section of $L_{\chi_{D_n}}$, and the character
$\chi_{D_n}$ moves with $n$ in a way we do not control. The device that repairs this is a
Riemann--Roch correction which is uniform in the character: fixing $m\ge 2g$, we produce for every
$\chi\in\mathcal X(X)$ a section
\[
  s_\chi\in H^0\bigl(X,L_\chi\otimes\mathcal O(mP_\infty)\bigr),
  \qquad
  s_\chi(p)=\zeta(p)^{-m}+O\bigl(\zeta(p)^{-m+1}\bigr),
  \qquad
  \sup_E|s_\chi|\le C_E ,
\]
with $C_E$ independent of $\chi$ (Lemma~\ref{lem:uniform-correction}) (here $H^0\bigl(X,L_\chi\otimes\mathcal O(mP_\infty)\bigr)$ is the space of holomorphic sections of the line bundle $L_\chi\otimes\mathcal O(mP_\infty)$). Multiplying $B_{D_n}$ by the correction section of the inverse character produces an
honest meromorphic function on $X$, admissible of order $n+m$, at the cost of a bounded factor and
a bounded shift of the pole order --- neither of which is visible after extracting
$(n+m)$-th roots. The matching lower bound is a Bernstein--Walsh inequality on $X$
(Theorem~\ref{thm:surface-BW}), obtained from the domination principle once one checks that the singularity
of $\log|f|-Ng_E$ at $P_\infty$ is removable.

One remark on the dependence on $\zeta$, Replacing $\zeta$ by $\tilde\zeta=c\zeta+O(\zeta^2)$ changes (iii) by
$\log|c|$, hence shifts $G$ and $V_E$ by $\log|c|$, so that
\[
  \operatorname{cap}(E)\;\longmapsto\;|c|^{-1}\operatorname{cap}(E) .
\]
 The
normalisation of the admissible functions below rescales in step, by $|c|^{-N}$, and the two
cancel. It is the Widom factor, and neither $t_N(E,P_\infty)$ nor $\operatorname{cap}(E)$ on its
own, that is an invariant of the pointed surface and the set; Theorem~\ref{thm:main-root} is
independent of $\zeta$ for the same reason.
\subsection{Widom theory on a bordered surface}
 
The second half of the paper takes up the refined asymptotics, and there the hypotheses on $E$ have
to be strengthened. 
\begin{ass}\label{ass1}
From now on, we make the following assumptions.
\begin{itemize}
    \item Let
    \[
      E=\Omega_0:=\Omega_1\cup\cdots\cup\Omega_p
      \subset X\setminus\{P_\infty\}
    \]
    be a finite union of pairwise disjoint embedded closed discs
    with real-analytic boundary. Put
    \[
      Y
      :=X\setminus E^\circ
      =X\setminus\bigcup_{j=1}^{p}\Omega_j^\circ,
      \qquad
      \Gamma
      :=\partial Y
      =\partial E
      =\Gamma_1\cup\cdots\cup\Gamma_p .
    \]
    Thus \(Y\) is a connected compact bordered Riemann surface
    of genus \(g\) with \(p\) boundary curves, and
    \[
      Y^\circ=X\setminus E.
    \]

    \item The function \(\rho\) is a strictly positive real-analytic
    weight on \(\Gamma\).
\end{itemize}
\end{ass}
This is the surface analogue of Widom's system of curves, and by
\eqref{eq:gE-at-Pinfty} the equilibrium Green function $g_E$ is the Dirichlet Green function of
$Y^\circ$ with pole at $P_\infty$.

Cut $Y^\circ\setminus\{P_\infty\}$ to a simply connected surface, choose a harmonic conjugate
$\widetilde g_E$, and set
\[
  \Phi_E:=\exp\bigl(g_E+i\widetilde g_E\bigr).
\]
Then $|\Phi_E|=e^{g_E}$ is single-valued, while continuation of $\Phi_E$ around a closed curve
multiplies it by a unimodular constant. This produces a character $\gamma_E\in\mathcal X(Y)$ and a
flat unitary line bundle $L_E\to Y^\circ$, of which $\Phi_E$ is a section: holomorphic and
nowhere vanishing on $Y^\circ\setminus\{P_\infty\}$, with a simple pole at $P_\infty$ normalised by
\begin{equation}\label{eq:PhiE}
  \Phi_E(p)=\operatorname{cap}(E)^{-1}\zeta(p)^{-1}\bigl(1+o(1)\bigr),\qquad p\to P_\infty .
\end{equation}
This is Widom's character $\Gamma(\Phi)$ in line-bundle notation. In the same way, for $a\in Y^\circ$
we write \begin{equation}\label{eq:gfac}\Phi(\cdot,a):=\exp(g_E(\cdot,a)+i\widetilde g_E(\cdot,a)),\end{equation} a  holomorphic section of the flat line bundle
 with unitary character, boundary modulus one, and a simple pole at $a$, normalised by
$\Phi(P_\infty,a)>0$; for an effective divisor $D=a_1+\dots+a_m$ in $Y^\circ$ we put
$\Phi_D:=\prod_j\Phi(\cdot,a_j)$, so that $\operatorname{div}\Phi_D=-D$.

There are now two extremal problems, and the theorems below say that they have the same answer.
On the bundle side, fix a flat unitary line
bundle $L\to Y$ and a local unitary frame for $L$ near $P_\infty$, and put $\mathcal{H}^{\infty}(Y,L)$ as bounded holomorphic sections.
\[
  \|F\|_{\rho,\infty}:=\operatorname*{ess\,sup}_{q\in\Gamma}|F(q)|\,\rho(q),
  \]
the moduli being taken in the fixed metric of $L$. The
\emph{Widom extremal constant}  is
\begin{equation}\label{eq:widomconst}
  \mu(L,\rho):=\inf\bigl\{\|F\|_{\rho,\infty}\;:\;F\in \mathcal{H}^{\infty}(Y,L),\ F(P_\infty)=1\bigr\},
\end{equation}
written $\mu(\rho,\chi)$ when $L=L_\chi$, and $\mu(L)$ when $\rho\equiv1$. On the Chebyshev side,
put
\[
  M_{n,\rho}:=\inf\bigl\{\|T\rho\|_\Gamma\;:\;T\in H^0\bigl(X,\mathcal O(nP_\infty)\bigr)
  \ \text{admissible of order}\ n\bigr\},
\]
with extremal $T_{n,\rho}$. Using the maximum-modulus principle the unweighted problem without $\rho$ is exactly \eqref{eq:def1}. The two sides are joined at once. If
$T_N$ is admissible of order $N$, then \eqref{eq:PhiE} makes
$F_N:=\operatorname{cap}(E)^{-N}T_N\Phi_E^{-N}$ a bounded holomorphic section of $L_E^{-N}$ with
$F_N(P_\infty)=1$ and $\|F_N\|_\Gamma=\operatorname{cap}(E)^{-N}\|T_N\|_E$, whence
\begin{equation}\label{eq:comparison}
  \mu\bigl(L_E^{-N}\bigr)\;\le\;W_N(E,P_\infty).
\end{equation}
 Everything that follows can be read as the statement that
\eqref{eq:comparison} is asymptotically an equality, together with a description of what the
left-hand side is.

The next theorem builds a duality for the minimiser. It is essentially taken from Widom's \cite{Widom71a} with rank 1 and also follows from \cite{Read1958}.
Let \(\mathcal E\to Y\) be a holomorphic line bundle that extends to a
neighbourhood of \(Y\).  Using the nowhere-vanishing holomorphic section
\(\tau_{\mathcal E}\) (see \eqref{eq:nvlb}), write
\[
        s=f\,\tau_{\mathcal E}.
\]
Following the usual definition on a bordered Riemann surface, we put
\[
\mathcal H^1(Y,\mathcal E)
:=
\left\{
s=f\,\tau_{\mathcal E}:
|f|\text{ has a harmonic majorant on }Y^\circ
\right\}.
\]
This definition does not depend on the choice of
\(\tau_{\mathcal E}\), since the quotient of two such sections, as well
as its inverse, is bounded on \(Y\).   \(\mathcal{H}^1\) theory gives
nontangential boundary values in \(L^1\); see
\cite[Ch.~IV]{Heins1969} and
\cite[pp.~304--305 and Sec.~3]{Widom71a}.

For a flat unitary line bundle \(L\to Y\), we write $L^{-1}(P_\infty)=L^{-1}\otimes\mathcal O(P_\infty)$
\[
\mathcal H^1(L^{-1};P_\infty)
:=
\mathcal H^1\!\left(
Y,K_Y\otimes L^{-1}(P_\infty)
\right).
\]
Its elements are \(L^{-1}\)-valued holomorphic differentials on
\(Y^\circ\setminus\{P_\infty\}\), with at most a simple pole at
\(P_\infty\), satisfying the preceding integrable non-tangential boundary values.  For
\(\eta\) in this space, set
\[
        \|\eta\|_{\rho^{-1},1}
        :=
        \frac1{2\pi}
        \int_\Gamma |\eta|\,\rho^{-1}.
\]

\begin{theorem}
\label{thm:duality}
Let \(L\to Y\) be a flat unitary line bundle, and let
\(\rho\in C^\alpha(\Gamma)\) be strictly positive.  Then
\[
  \mu(L,\rho)
  =
  \sup_{\substack{
      \eta\in\mathcal H^1(L^{-1};P_\infty)\\
      \eta\ne0}}
  \frac{
      \left|\operatorname{Res}_{P_\infty}\eta\right|
  }{
      \|\eta\|_{\rho^{-1},1}
  }.
\]
\end{theorem}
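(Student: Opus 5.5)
The plan is the classical Hahn--Banach duality of Widom and Read, carried out in two steps: an easy weak inequality $\ge$, then equality through a Banach-space duality between a quotient of $L^1$ and an annihilator in $L^\infty$.

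\emph{Weak inequality.} Take $F\in\mathcal H^\infty(Y,L)$ with $F(P_\infty)=1$, and $\eta\in\mathcal H^1(L^{-1};P_\infty)$. The product $F\eta$ is a scalar meromorphic differential on $Y^\circ$, holomorphic except for at most a simple pole at $P_\infty$. Its residue there is $\operatorname{Res}_{P_\infty}\eta$, because $F(P_\infty)=1$ in the unitary frame. Since $F$ is bounded and $\eta$ is $\mathcal H^1$, the product $F\eta$ is in $\mathcal H^1$ of the canonical bundle. Stokes' theorem, applied on an exhaustion of $Y^\circ$ by level curves of $g_E$ and justified by nontangential convergence and the harmonic majorant (as in \cite[Ch.~IV]{Heins1969}), gives
\[
\operatorname{Res}_{P_\infty}\eta=\frac{1}{2\pi i}\int_\Gamma F\eta .
\]
Taking absolute values,
\[
\bigl|\operatorname{Res}_{P_\infty}\eta\bigr|\le \frac1{2\pi}\int_\Gamma |F|\rho\,|\eta|\rho^{-1}\le \|F\|_{\rho,\infty}\,\|\eta\|_{\rho^{-1},1}.
\]
Taking the infimum over $F$ yields $\mu(L,\rho)\ge\sup$.

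\emph{Reverse inequality.} Let $\mathcal A\subset L^\infty(\Gamma,L)$ denote the weak-$*$ closed subspace of boundary values of $\mathcal H^\infty(Y,L)$, with norm $\|\cdot\|_{\rho,\infty}$. Then
\[
\mu(L,\rho)=\operatorname{dist}\bigl(F_0,\ \mathcal A\cap\{F(P_\infty)=0\}\bigr)
\]
for any fixed $F_0$ with $F_0(P_\infty)=1$. Such an $F_0$ exists because $Y$ is a compact bordered surface, so $\mathcal H^\infty(Y,L)$ separates points and has sections that do not vanish at $P_\infty$. Realize $L^\infty(\Gamma,L)$ with the weighted norm as the dual of $L^1(\Gamma,L^{-1}\otimes K)$ with norm $\frac1{2\pi}\int|\cdot|\rho^{-1}$, under the pairing $\langle F,\omega\rangle=\frac1{2\pi i}\int_\Gamma F\omega$. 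The duality formula for distance to a weak-$*$ closed subspace then says that $\mu$ equals the supremum of $|\langle F_0,\omega\rangle|/\|\omega\|$ over $\omega$ in the preannihilator $\mathcal N$ of $\mathcal A_0:=\mathcal A\cap\{F(P_\infty)=0\}$. This step needs $\mathcal A_0$ to be weak-$*$ closed. That follows from boundedness of point evaluation at $P_\infty$, via the Poisson representation, and the Krein--Smulian theorem.

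\emph{Main obstacle.} The whole argument then reduces to identifying $\mathcal N$ with the boundary values of $\mathcal H^1(L^{-1};P_\infty)$, with $\langle F_0,\omega\rangle=\operatorname{Res}_{P_\infty}\eta$. The inclusion $\supseteq$ is the Stokes computation above. For $\subseteq$, this is a Riesz brothers / F.~and M.~Riesz type theorem on bordered surfaces.
\begin{enumerate}
\item Annihilation of $\mathcal A_0$ gives annihilation of all holomorphic sections of $L$ on a neighbourhood of $Y$ vanishing at $P_\infty$.
\item Form the Cauchy-type integral of $\omega$ against a Cauchy kernel for $L$ on the double, i.e.\ a meromorphic section of $L\boxtimes(L^{-1}\otimes K)$ with a simple diagonal pole. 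The extra pole allowed at $P_\infty$ absorbs the obstruction from $H^0(K\otimes L^{-1})$ of the double. This defines an $L^{-1}$-valued differential $\eta$ on $Y^\circ$, with a simple pole at $P_\infty$, and a companion integral on $X\setminus Y$ (the discs $E^\circ$).
\item The annihilation conditions say the exterior integral vanishes. The Plemelj jump formula then shows that $\eta$ has boundary values $\omega$.
\item Membership of $\eta$ in $\mathcal H^1$ follows because $\Gamma$ is analytic: $\omega\in L^1$ and its Cauchy integral has vanishing exterior part, so the classical F.~and M.~Riesz theorem applied in local annular charts gives $\eta$ a harmonic majorant for $|\eta|$.
\end{enumerate}
I expect this identification to be the delicate step. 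It is where Widom's argument \cite{Widom71a} and Read's \cite{Read1958} are invoked; for rank one they apply essentially verbatim after trivializing $L$ by the frame $\tau$. The hypothesis $\rho\in C^\alpha$ only ensures that the weighted norms are equivalent to the unweighted ones, so $\rho$ does not affect the identification.
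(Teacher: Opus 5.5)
Your proof is correct in outline but organised differently from the paper's. The paper does not run a Hahn--Banach argument. For $\rho\equiv1$ it invokes the rank-one case of Widom's bundle-valued duality lemma. It then removes the weight with the outer section $R_\rho$: the substitutions $\widehat F=FR_\rho/r_\rho$ and $\widehat\eta=\eta/R_\rho$ move the weight into the bundle $\widehat L=L\otimes L_\rho$, giving $\mu(L,\rho)=r_\rho\,\mu(\widehat L,1)$ and the matching identity on the dual side.

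You keep $L$ fixed and put $\rho$ into the norms of the dual pair: weighted $L^\infty$ as the dual of $\rho^{-1}$-weighted $L^1$. Since $\rho^{\pm1}$ are bounded, the preannihilator $\mathcal N$ is the same set as in the unweighted case, so the weight never enters the hard step. What each route buys:
\begin{itemize}
\item Your route is more self-contained. It needs only Read's converse of Cauchy's theorem, works for any weight with $\rho,\rho^{-1}\in L^\infty$ without solving a Dirichlet problem for $\log\rho$, and gives existence of a minimiser from weak-$*$ compactness.
\item The paper's route is shorter. It also produces $R_\rho$ and the identity $\mu(L,\rho)=R_\rho(P_\infty)\,\mu(L\otimes L_\rho,1)$, which feed directly into the product formula \eqref{eq:product} and Theorem~\ref{thm:range}.
\end{itemize}

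One caveat concerns your sketch of the identification of $\mathcal N$. For step 3 you need, for each exterior point $q$, the kernel $C(\cdot,q)$ to be a section of $L$ on $Y$ with only the diagonal pole and a zero at $P_\infty$. In positive genus this does not exist in general. It would be a nonzero section of the degree-zero bundle $L\otimes\mathcal O(q-P_\infty)$, which forces $L\cong\mathcal O(P_\infty-q)$. So a single extra pole at $P_\infty$ cannot absorb the generically $(\hat g-1)$-dimensional obstruction $H^0(K\otimes L^{-1})$. You need an auxiliary divisor of degree equal to the genus, placed off $Y$, exactly as for the kernel $C_{\mathcal D}$ of \eqref{eq:CD2}. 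Also, on the Schottky double the complement of $Y$ is $\sigma(Y)$, not the discs $E^\circ$.

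The simplest repair avoids kernels altogether:
\begin{enumerate}
\item Trivialise $L$ by $\tau$ and put $c=\frac1{2\pi i}\int_\Gamma\tau\omega$.
\item Since $\omega$ annihilates $\mathcal A_0$, the boundary form $\tau\omega+c\,\iota^*d\mathcal G$ annihilates every function holomorphic near $Y$.
\item Read's theorem then gives an $\mathcal H^1$ holomorphic differential $\eta_0$ with these boundary values.
\item Set $\eta=(\eta_0-c\,d\mathcal G)/\tau$. It lies in $\mathcal H^1(L^{-1};P_\infty)$, has boundary values $\omega$, and satisfies $\operatorname{Res}_{P_\infty}\eta=c/\tau(P_\infty)=\langle F_0,\omega\rangle$ for $F_0=\tau/\tau(P_\infty)$.
\end{enumerate}
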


\noindent Proved in Section \ref{ss:proof-duality}; we remark that $\left|\operatorname{Res}_{P_\infty}\eta\right|$ is well defined \cite{Widom71a}. The
extremal section is then unique and satisfies a product formula.

\begin{theorem}\label{thm:extremal}
Let $\chi\in\mathcal X(Y)$ and let $\rho>0$ be real analytic on $\Gamma$. Then the infimum
\eqref{eq:widomconst} is attained by a unique $F_{\rho,\chi}\in \mathcal{H}^{\infty}(Y,L_\chi)$ with
$F_{\rho,\chi}(P_\infty)=1$, the supremum in Theorem~\ref{thm:duality} is attained, and
\[
  \bigl|F_{\rho,\chi}\bigr|\,\rho=\mu(\rho,\chi)\qquad\text{a.e. on }\Gamma .
\]
The zeros of $F_{\rho,\chi}$ form an effective divisor $D_{\rho,\chi}$ in
$Y^\circ\setminus\{P_\infty\}$ of degree at most
\[
  \hat g:=2g+p-1=\operatorname{rank}H_1(Y,\mathbb Z),
\]
and, writing $R_\rho$ as a section of a flat unitary line bundle, with $|R_\rho|=\rho$ on $\Gamma$
and $R_\rho(P_\infty)>0$,
\begin{equation}\label{eq:product}
  F_{\rho,\chi}=\mu(\rho,\chi)\,R_\rho^{-1}\,\Phi_{D_{\rho,\chi}}^{-1}
 \end{equation}
so that, evaluating at $P_\infty$,
\begin{equation}\label{eq:mu-formula}
  \mu(\rho,\chi)=R_\rho(P_\infty)\prod_{z\in D_{\rho,\chi}}\Phi(P_\infty,z)
  =R_\rho(P_\infty)\,\exp\Bigl(\sum_{z\in D_{\rho,\chi}}g_E(z)\Bigr).
\end{equation}
The positions of the zeros are determined by two systems of $\hat g$ transcendental equations,
which replace Widom's equations in harmonic-measure coordinates.
\end{theorem}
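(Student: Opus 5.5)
We follow Widom's route \cite{Widom69,Widom71a}, using the duality of Theorem~\ref{thm:duality} as the main tool.

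\textbf{Step 1: existence of minimiser and maximiser.} The minimiser exists by a normal-families argument. Bounded sections with $\|F\|_{\rho,\infty}\le C$ form a locally uniformly bounded family on $Y^\circ$, because $\rho$ is bounded below and the maximum principle applies to $|F|$. Such a family is normal. The normalisation $F(P_\infty)=1$ passes to limits, and the boundary norm is weak-$*$ lower semicontinuous. Nondegeneracy ($\mu<\infty$) follows from the existence of some nonzero element of $\mathcal H^\infty(Y,L_\chi)$. On a finite bordered surface with analytic boundary this is automatic: take $\Phi_D^{-1}$ times a suitable nonvanishing flat section, or invoke Widom's theorem \cite{Widom71a}, since the Parreau--Widom condition holds trivially here (there are finitely many critical points).

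The maximiser $\eta$ in Theorem~\ref{thm:duality} is obtained from the Hahn--Banach dual formulation. Alternatively, we take a maximising sequence normalised by $\operatorname{Res}_{P_\infty}\eta=1$ and use the fact that $\mathcal H^1$ on a compact bordered surface with analytic boundary is closed under the relevant weak limits. Here the analyticity of $\Gamma$ and $\rho$ is used: it lets one pass to the double and rule out singular parts, exactly as in the F.~and M.~Riesz step of \cite{Widom71a}.

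\textbf{Step 2: extremality conditions.} Pair $F$ with $\eta$. The product $F\eta$ is an ordinary meromorphic differential on $Y$ with only a simple pole at $P_\infty$, since the characters cancel. Stokes' theorem then gives
\[
\operatorname{Res}_{P_\infty}\eta=\frac{1}{2\pi i}\int_\Gamma F\eta .
\]
Hence $|\operatorname{Res}\eta|\le\|F\|_{\rho,\infty}\|\eta\|_{\rho^{-1},1}$. Equality in the duality forces two conditions: $|F|\rho=\mu$ a.e. on the support of $\eta$, and $F\eta\ge0$ along $\Gamma$ in the sense of boundary orientation.

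Extend $F\eta$ to the Schottky double $\widehat Y$, of genus $\hat g$, by reflection. It becomes a meromorphic differential that is real on $\Gamma$, with simple poles at $P_\infty$ and its mirror point. Its zeros are therefore counted by $\deg K_{\widehat Y}+2=2\hat g$, and they are symmetric under the reflection. Since $F\eta\ge0$ on $\Gamma$, zeros of $F\eta$ on $\Gamma$ have even order. Splitting the count between $Y^\circ$ and $\Gamma$, the zeros of $F$ in $Y^\circ$ number at most $\hat g$. In particular $\eta\neq0$ a.e. on $\Gamma$, so $|F|\rho=\mu$ a.e. on all of $\Gamma$.

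\textbf{Step 3: product formula, uniqueness, and the two systems.} Consider $F\,R_\rho\,\Phi_D$, with $D$ the zero divisor of $F$. It is zero-free and pole-free on $Y^\circ$, has constant modulus $\mu$ on $\Gamma$, and is a section of a flat unitary bundle. Its $\log|\cdot|$ is therefore harmonic on $Y^\circ$ with constant boundary values, hence constant. This gives \eqref{eq:product}, and evaluating at $P_\infty$ gives \eqref{eq:mu-formula}.

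Uniqueness follows by a standard argument. If $F_1$ and $F_2$ are both minimisers, then $(F_1+F_2)/2$ is also one. So it has modulus $\mu/\rho$ a.e., and strict convexity of the modulus forces $F_1=F_2$ a.e. on $\Gamma$.

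The two systems of $\hat g$ equations come from two requirements:
\begin{itemize}
\item the character of $R_\rho^{-1}\Phi_D^{-1}$ must equal $\chi$, which gives $\hat g$ period conditions;
\item the reflected differential $F\eta$ must exist with the prescribed zeros, which gives $\hat g$ conditions from the periods of the associated abelian differential on the double.
\end{itemize}

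\textbf{Main obstacle.} We expect the hardest part to be the existence and regularity of the extremal $\eta$, together with the precise zero count on $\Gamma$ via the double, especially when zeros lie on the boundary. Analyticity of $\rho$ is what allows the reflection. The first thing to try is to prove that $F\eta$ extends analytically across $\Gamma$ by Schwarz reflection, after applying a local conformal map flattening $\Gamma$.
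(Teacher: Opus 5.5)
\textbf{There is a genuine gap in your Step~3.} You conclude that $U:=F R_\rho\Phi_D$ is constant because $\log|U|$ is harmonic on $Y^\circ$ with constant boundary values. At that stage, however, you only know $|U|=\mu$ \emph{almost everywhere} on $\Gamma$, as non-tangential limits of a bounded zero-free section. That does not force constancy. On the unit disc, $\exp\bigl(-\frac{1+z}{1-z}\bigr)$ is bounded, zero-free, non-constant, and has modulus one a.e.\ on the circle.

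What is missing is the exclusion of a singular inner factor in $F$, that is, regularity of $F$ itself up to $\Gamma$. Your Step~2 does not give this. Reflecting $\frac1i F\eta$ shows only that the \emph{product} $F\eta$ continues analytically across $\Gamma$. That is enough for the zero count and for $\eta\neq0$ a.e., but deducing anything about $F$ separately needs an extra argument, which you do not give. The same point is needed to make sense of the claim that the zero divisor lies in $Y^\circ$: you must know that $F$ extends continuously and without zeros to $\Gamma$.

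\textbf{How the paper closes this.} It uses the local continuation lemma (Lemma~\ref{lem:widom-local-regularity}, after Read). In a boundary chart:
\begin{enumerate}
\item Divide $F$ by a holomorphic extension of $\mu/\rho$, so that $|f|=1$ a.e., and write the dual differential as $h\,dz$.
\item Realness of $fh$ on the boundary gives $\bar h=f^2h$ a.e. Hence $h+f^2h$ has real boundary values and $h-f^2h$ has imaginary ones, so both $h$ and $f^2h$ continue by Schwarz reflection.
\item The quotient $q=f^2h/h$ is unimodular on the boundary. It is therefore holomorphic and zero-free across it, and $f=\sqrt q$ continues holomorphically without zeros.
\end{enumerate}
After this, $|F|\rho=\mu$ holds everywhere on $\Gamma$, and your maximum-principle argument in Step~3 becomes valid.

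Alternatively, you could repair your own route by local inner--outer factorization. Since $F\eta$ continues analytically across $\Gamma$, it has no singular inner part. The singular measures of $F$ and $\eta$ are positive and add, so $F$ has none either. This would have to be argued explicitly.

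\textbf{The rest of your proposal.} Counting zeros of the reflected differential on the Schottky double is a clean substitute for the paper's route, which uses the Poincar\'e--Hopf count for $d\mathcal G$ together with the argument principle for $F_0\eta_0/d\mathcal G$. Your strict-convexity argument for uniqueness is also fine, in place of the paper's converse criterion. Finally, saying the zeros are ``determined by'' the two systems requires that the systems be sufficient, not just necessary. The paper obtains sufficiency from Lemma~\ref{lem:widom-5-3-surface}, and your sketch does not address it.
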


\noindent Proved in \S\ref{ss:extremal}. Formula \eqref{eq:mu-formula} is the one to keep in mind: the Widom
constant is the outer factor at $P_\infty$ multiplied by the exponential of a Green sum over the
zeros of the extremal. Since $g_E>0$ on $Y^\circ$, it also shows that $\mu(\rho,\chi)$ is largest
when the divisor is as large as it can be, and this is what the next theorem makes precise.

The counting is governed by the Green differential $dG:=2\partial g_E(\cdot,P_\infty)$. It has a
simple pole at $P_\infty$, no zeros on $\Gamma$ by Hopf's lemma, and, by Poincar\'e--Hopf together
with $\chi(Y)=2-2g-p$, exactly $\hat g=2g+p-1$ zeros
\[
  B:=(dG)_0=b_1+\dots+b_{\hat g}
\]
in $Y^\circ\setminus\{P_\infty\}$ (~\eqref{eq:green-differential-zero-count}). These are the critical points of the
Green function $g_E$, so that $\sum_\ell g_E(b_\ell)$ is precisely the Parreau--Widom sum of $E$;
it now has $2g+p-1$ terms rather than $p-1$. 
\begin{remark}We remark at this point that $\hat g$ is the genus of the compact Riemann surface $\hat Y$ which is the Schottky double of $Y$. This also
gives another interpretation of the zero count above. Indeed, if
\(\sigma:\widehat Y\to\widehat Y\) denotes the antiholomorphic
involution, then the Dirichlet Green
function admits the  harmonic continuation
\[
 \widehat g_E(p)=
 \begin{cases}
   g_E(p),&p\in Y,\\
   -g_E\bigl(\sigma(p)\bigr),&p\in\sigma(Y).
 \end{cases}
\]
Consequently, \(dG=2\partial\widehat g_E\) is an Abelian differential
of the third kind on \(\widehat Y\), with simple poles of residues
\(-1\) and \(1\) at \(P_\infty\) and \(\sigma(P_\infty)\), respectively.
It therefore has \(2\widehat g\) zeros on \(\widehat Y\). Since it has
no zeros on \(\Gamma\), reflection places exactly \(\widehat g\) of
them in \(Y^\circ\). See
\cite[Section~2.1]{GustafssonSebbar}.
\end{remark}

\begin{theorem}[Range of the Widom constants]\label{thm:range}
Let $\rho>0$ be real analytic on $\Gamma$. As $\chi$ ranges over the character torus
$\mathcal X(Y)\cong\mathbb T^{\hat g}$, the constants $\mu(\rho,\chi)$ fill the closed interval
\begin{equation}\label{eq:range}
  \Bigl[\;R_\rho(P_\infty),\;\;R_\rho(P_\infty)\prod_{\ell=1}^{\hat g}\Phi(P_\infty,b_\ell)\;\Bigr]
  \;=\;\Bigl[\;R_\rho(P_\infty),\;\;R_\rho(P_\infty)\exp\Bigl(\sum_{\ell=1}^{\hat g}g_E(b_\ell)\Bigr)\;\Bigr].
\end{equation}
The minimum is attained at $\chi=-\gamma_{R_\rho}$, the maximum at
$\chi=-\gamma_{R_\rho}-\gamma_B$, where $\gamma_{R_\rho}=\operatorname{char}R_\rho$ and
$\gamma_B=\sum_\ell\operatorname{char}\Phi(\cdot,b_\ell)$.
\end{theorem}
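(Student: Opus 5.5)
The plan rests on the product formula \eqref{eq:mu-formula} of Theorem~\ref{thm:extremal}. It gives $\mu(\rho,\chi)=R_\rho(P_\infty)\exp\bigl(\sum_{z\in D_{\rho,\chi}}g_E(z)\bigr)$ with $\deg D_{\rho,\chi}\le\hat g$. The theorem then splits into three parts: a lower bound, an upper bound, and connectedness of the range. Combining the three gives the closed interval \eqref{eq:range}.

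\emph{Lower bound.} Since $g_E>0$ on $Y^\circ$, every term in the sum is positive, so $\mu(\rho,\chi)\ge R_\rho(P_\infty)$. Equality forces $D_{\rho,\chi}=0$, and then $F_{\rho,\chi}=\mu R_\rho^{-1}$. Conversely, for $\chi=-\gamma_{R_\rho}$ (the character of $R_\rho^{-1}$), the section $R_\rho(P_\infty)R_\rho^{-1}$ is admissible and has $|\cdot|\rho\equiv R_\rho(P_\infty)$ on $\Gamma$. Hence the minimum is attained there.

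\emph{Upper bound.} This is the main obstacle. We must show $\sum_{z\in D}g_E(z)\le\sum_\ell g_E(b_\ell)$ for every divisor $D$ that can occur as $D_{\rho,\chi}$.

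\begin{itemize}
\item First use duality (Theorem~\ref{thm:duality}). The maximiser $\eta$ satisfies $\eta\,F_{\rho,\chi}\,\rho^{-1}\ge0$ along $\Gamma$ (alignment in the extremal pairing). Combine this with $dG$, which is real and nonvanishing on $\Gamma$.
\item From the two facts above, express $\eta=c\,F_{\rho,\chi}^{-1}R_\rho^{-2}\cdots dG\cdot h$. The intended outcome is the divisor relation $D_{\rho,\chi}+(\text{zeros of }\eta)=B$, possibly modulo reflected points. This is Widom's relation in the plane: zeros of $F$ together with zeros of the dual differential fill out the critical points, one per "gap" $b_\ell$, sitting on a cycle through $b_\ell$ on the Schottky double.
\item In the planar case each zero of $F$ lies on a curve (a level line of $g_E$ through $b_\ell$) along which $g_E\le g_E(b_\ell)$. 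I would try the same here. Use the double $\hat Y$ and the Jacobi inversion of $\chi\mapsto D$ to show that the zeros of $F$ are indexed by $\ell$, with $z_\ell$ confined to a region on which $g_E\le g_E(b_\ell)$.
\end{itemize}

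Summing over $\ell$ gives the upper bound. Equality holds when $D=B$, i.e.\ when $F=\mu R_\rho^{-1}\Phi_B^{-1}$. This section has character $-\gamma_{R_\rho}-\gamma_B$, and its extremality follows by exhibiting the dual differential $\eta=dG\cdot\Phi_B^{\,2}R_\rho\cdots$ that saturates the inequality of Theorem~\ref{thm:duality}. Checking this requires only that $\Phi_B dG$ have constant phase on $\Gamma$, which we would verify using $|\Phi_B|=1$ and the reality of $dG$ there.

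\emph{Filling the interval.} Show that $\chi\mapsto\mu(\rho,\chi)$ is continuous on $\mathcal X(Y)$. Upper semicontinuity holds because $\mu$ is an infimum of functions that depend continuously on $\chi$, after trivialising the bundles uniformly. Lower semicontinuity follows from the supremum side of Theorem~\ref{thm:duality}. Alternatively, obtain continuity from compactness of the family of extremals together with uniqueness. Since $\mathcal X(Y)\cong\mathbb T^{\hat g}$ is connected, the image of $\mu$ is a connected set containing both endpoints, hence the whole interval \eqref{eq:range}.
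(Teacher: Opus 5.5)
\textbf{Gap: the upper bound.} Your lower bound, the attainment of the minimum at $\chi=-\gamma_{R_\rho}$, and the continuity/connectedness step are fine. The paper obtains continuity from the two-sided comparison $m_\chi\,\mu(\rho,\chi_0)\le\mu(\rho,\chi)\le M_\chi\,\mu(\rho,\chi_0)$, got by multiplying by $q_\chi=\psi_\chi/\psi_{\chi_0}$; this is your semicontinuity argument made explicit with local trivialisations.

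You reduce the upper bound to pairing the zeros of $F_{\rho,\chi}$ with the critical points, $z_\ell\leftrightarrow b_\ell$, so that $g_E(z_\ell)\le g_E(b_\ell)$, and you hope to get this from Jacobi inversion on $\widehat Y$. Nothing available gives such a pairing.

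\begin{itemize}
\item The relation you aim for, $D_{\rho,\chi}+(\eta)_0=B$, is false for general $\chi$. What holds is that $A=\operatorname{div}_0(F_0\eta_0)$ has the same degree as $B$ and $F_0\eta_0/d\mathcal G=C\prod_\ell\Psi_{b_\ell,a_\ell}$. Equivalently, $A^{\mathrm d}-B^{\mathrm d}$ is principal on the double.
\item If $A=B$ held, $F_0\eta_0/d\mathcal G$ would be constant and every $\chi$ would be the extremal character $\chi_*$.
\item Linear-equivalence (Abel--Jacobi) information is a constrained real inversion condition. It says nothing about the individual values $g_E(z)$.
\item Confining zeros to arcs through the $b_\ell$ on which $g_E\le g_E(b_\ell)$ is a feature of finite-gap subsets of $\mathbb R$ (one zero per gap, forced by reflection symmetry). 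It is not available for Widom's general systems of curves, and still less on $Y$ with its $2g$ additional handle critical points.
\end{itemize}

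\textbf{The missing idea.} No information about the zeros is needed; the bound follows from Theorem~\ref{thm:duality} by a harmonic-majorant estimate.
\begin{itemize}
\item For $\eta\in\mathcal H^1(L^{-1};P_\infty)$ with $\|\eta\|_{\rho^{-1},1}=1$ (e.g.\ the dual extremal), put $H:=\eta\,(d\mathcal G)^{-1}R_\rho^{-1}\Phi_B^{-1}$.
\item The zeros of $\Phi_B^{-1}$ cancel exactly the poles of $(d\mathcal G)^{-1}$ at the $b_\ell$, and the simple poles of $\eta$ and $d\mathcal G$ at $P_\infty$ cancel. So $H$ is a holomorphic section of a flat unitary bundle and $|H|$ is subharmonic.
\item On $\Gamma$ we have $|H|\,\beta_g=|\eta|\rho^{-1}$, because $|\Phi_B|=1$, $|R_\rho|=\rho$ and $|\iota^*d\mathcal G|=\beta_g$.
\item Since $\beta_g/2\pi$ is harmonic measure at $P_\infty$, $|H(P_\infty)|\le1$. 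This reads $|\operatorname{Res}_{P_\infty}\eta|\le R_\rho(P_\infty)\Phi_B(P_\infty)$, and duality gives $\mu(\rho,\chi)\le R_\rho(P_\infty)\Phi_B(P_\infty)$.
\end{itemize}
This is the paper's route.

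\textbf{The certificate at the maximum needs correcting.} The dual differential should be $\eta_*=-R_\rho\Phi_B\,d\mathcal G$. With $\Phi_B^{2}$ it keeps simple poles at the $b_\ell$ and has the wrong character, so it is not in $\mathcal H^1(L_{\chi_*}^{-1};P_\infty)$. What must have constant phase on $\Gamma$ is $F_*\eta_*=-M_*\,d\mathcal G$, not $\Phi_B\,d\mathcal G$: the phase of $\Phi_B$ is strictly monotone along each $\Gamma_k$. With this correction, Lemma~\ref{lem:widom-5-3-surface} gives extremality of $F_*$.
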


\noindent Proved in \S\ref{ss:range}. The extremes are the two extremes of \eqref{eq:mu-formula}: the empty
divisor, and the full zero divisor of $dG$.

The main asymptotic theorem is the analogue of Widom's Theorem 8.3. However one key ingredient is missing, which is the Cauchy kernel. Its replacement is $C_\mathcal{D}(q,p)$, \cite[\S2, 2.1]{Bertola2021},\cite{Fay} which Bertola uses to define the Weyl-Stieltjes function. It is a meromorphic differential in $q$ variable and a meromorphic function in $p$ variable, except it has some auxiliary poles and zeros assigned by a divisor $\mathcal{D}$. Let \(Y^{-}\supset Y\) be a slightly enlarged bordered surface obtained by adjoining to \(Y\) a sufficiently thin analytic collar across \(\Gamma\). Choose distinct points \(d_1,\ldots,d_g\in X\setminus\overline{Y_-}\)
in general, so that the divisor
\(\mathcal D:=d_1+\cdots+d_g\) is nonspecial. Write
\begin{equation}\label{eq:CD1}
        C_{\mathcal D}(q,p)
        :=C^{(1,0)}(q,p;\mathcal D,P_\infty).
\end{equation}
This is a meromorphic differential in \(q\), a meromorphic function in
\(p\), and its defining properties are
\begin{equation}\label{eq:CD2}
\begin{aligned}
        \operatorname{div}_q C_{\mathcal D}(q,p)
        &\ge \mathcal D-p-P_\infty,\\
        \operatorname{div}_p C_{\mathcal D}(q,p)
        &\ge-\mathcal D-q+P_\infty,
\end{aligned}
\qquad
        \operatorname{Res}_{q=p}C_{\mathcal D}(q,p)=1.
\end{equation}
The residue theorem in the \(q\)-variable then gives
\(\operatorname{Res}_{q=P_\infty}C_{\mathcal D}(q,p)=-1\). An explicit formula is also available in terms of Riemann theta functions \cite[\S3]{Bertola2021}, however we omit it as it will not be needed. We will use this kernel to construct an admissible function of order $n$ that locally uniformly approximates the minimiser. However in this construction we run into the poles at the divisor $\mathcal{D}$. These additional poles  must be removed from the resulting projections, and this is not immediate and requires extra care.

\begin{theorem}[Szeg\H{o}--Widom asymptotics]\label{thm:sw}
Assume that  the Assumptions \ref{ass1} holds. Put
\[
  \chi_n:=-n\gamma_E\in\mathcal X(Y),
  \qquad
  \mu_n:=\mu\bigl(L_E^{-n},\rho\bigr)=\mu(\rho,\chi_n),
\]
and let $F_n:=F_{\rho,\chi_n}\in \mathcal{H}^{\infty}(Y,L_E^{-n})$, $F_n(P_\infty)=1$, be the Widom extremal in
that class. Then
\begin{equation}\label{eq:sw-norm}
  M_{n,\rho}=\operatorname{cap}(E)^{n}\bigl(\mu_n+o(1)\bigr),\qquad n\to\infty ,
\end{equation}
and
\begin{equation}\label{eq:sw-fn}
  \frac{T_{n,\rho}}{\operatorname{cap}(E)^{n}\,\Phi_E^{\,n}}-F_n\;\longrightarrow\;0
\end{equation}
locally uniformly on $Y^\circ$.
If moreover the divisors $D_{\rho,\chi_n}$ stay away from $\Gamma$, then \eqref{eq:sw-fn} holds
uniformly on $Y$.
\end{theorem}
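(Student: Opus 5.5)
The proof has two halves. The lower bound is already in hand: the weighted version of \eqref{eq:comparison} gives $M_{n,\rho}\ge \operatorname{cap}(E)^n\mu_n$. We run the same computation with $\rho$. For admissible $T$ of order $n$, the section $F=\operatorname{cap}(E)^{-n}T\Phi_E^{-n}$ lies in $\mathcal H^\infty(Y,L_E^{-n})$ and satisfies $F(P_\infty)=1$. Since $|\Phi_E|=1$ on $\Gamma$, we also have $\|F\|_{\rho,\infty}=\operatorname{cap}(E)^{-n}\|T\rho\|_\Gamma$. The whole weight of the proof is therefore the upper bound. We must show that for every $\varepsilon>0$ and all large $n$ there is an admissible $T$ of order $n$ with $\|T\rho\|_\Gamma\le\operatorname{cap}(E)^n(\mu_n+\varepsilon)$, and that $T$ is close to $\operatorname{cap}(E)^n\Phi_E^nF_n$ on $Y^\circ$.

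\emph{Construction.} Start from the product formula \eqref{eq:product}, $F_n=\mu_nR_\rho^{-1}\Phi_{D_n}^{-1}$ with $\deg D_n\le\hat g$. Put $S_n:=\operatorname{cap}(E)^n\Phi_E^nF_n$. This is a single-valued holomorphic function on $Y^\circ\setminus\{P_\infty\}$ with leading term $\zeta^{-n}$ and modulus $\operatorname{cap}(E)^ne^{ng_E}\mu_n/\rho\,\cdot|\Phi_{D_n}|^{-1}$. The plan is to continue $S_n$ slightly across $\Gamma$ into the collar $Y^-$ and project it onto functions with poles only at $P_\infty$ (plus $\mathcal D$) using the Cauchy kernel:
\[
T^{(0)}_n(p):=\frac1{2\pi i}\oint_{\Gamma^-}C_{\mathcal D}(q,p)\,S_n(q),
\]
where $\Gamma^-$ is a level curve $\{g_E=-\delta\}$ of the reflected Green function inside $E$. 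Real-analyticity of $\rho$ and $\Gamma$ lets $R_\rho$ and $\Phi(\cdot,a)$ continue into the collar. By the residue theorem, $S_n-T^{(0)}_n$ equals the integral over a curve $\{g_E=\delta'\}$ in $Y^\circ$ together with the contribution at $P_\infty$.

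We then estimate the discrepancy. On $\Gamma^-$ one has $|\Phi_E^n|=e^{-n\delta}$, so on $\Gamma$ the error $T_n^{(0)}-S_n$ is $O(e^{-n\delta}\operatorname{cap}(E)^n)$, provided $|\Phi_{D_n}^{-1}|$ stays bounded on $\Gamma^-$. That holds uniformly because $D_n$ lies in $Y^\circ$ and $\Phi(\cdot,a)$ has modulus $e^{g_E(\cdot,a)}\ge e^{-C}$ near $\Gamma$. Uniformity as points of $D_n$ approach $\Gamma$ needs care; we would use compactness of the family over the character torus together with continuity of the extremal in $\chi$, as in Theorem~\ref{thm:range}. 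This gives $\|T_n^{(0)}\rho\|_\Gamma\le\operatorname{cap}(E)^n(\mu_n+o(1))$ and $T_n^{(0)}/(\operatorname{cap}(E)^n\Phi_E^n)-F_n\to0$ locally uniformly on $Y^\circ$, and uniformly on $Y$ when $D_n$ stays away from $\Gamma$.

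\emph{Main obstacle: the auxiliary poles at $\mathcal D$.} $T^{(0)}_n$ lies in $L(nP_\infty+\mathcal D)$ rather than in $L(nP_\infty)$, and its principal parts at $d_1,\dots,d_g$ are given by integrals $\oint_{\Gamma^-}\operatorname{Res}_{p=d_j}C_{\mathcal D}(q,p)S_n(q)$. These are $O(e^{-n\delta}\operatorname{cap}(E)^n)$, since again only the $\Gamma^-$ integral contributes. To remove them, we plan to use nonspeciality of $\mathcal D$ and Riemann--Roch: for $n\ge 2g$ the map $L(nP_\infty+\mathcal D)\to\bigoplus_j\C$ (principal parts at $d_j$) is onto with kernel $L(nP_\infty)$. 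We need a right inverse whose norm on $E$ grows at most like $\operatorname{cap}(E)^ne^{n\delta/2}$, so that the correction stays exponentially small. For this we would take correction functions of the form $h_j\,\zeta$-monomials of fixed order $m\ge 2g$, in the spirit of Lemma~\ref{lem:uniform-correction}. Each multiplies the fixed pole data by a bounded-degree admissible function, and Bernstein--Walsh (Theorem~\ref{thm:surface-BW}) bounds its size on $E$ by $C\,\operatorname{cap}(E)^{m}e^{o(n)}$. A small fixed-degree correction then also fixes the leading coefficient, so admissibility is restored with order exactly $n$. Combining the two bounds proves \eqref{eq:sw-norm}. For \eqref{eq:sw-fn}, uniqueness of $T_{n,\rho}$ follows from strict convexity/Haar-type arguments, or we can use the normal-families compactness of $T_{n,\rho}/(\operatorname{cap}(E)^n\Phi_E^n)$ together with the uniqueness in Theorem~\ref{thm:extremal}: any limit point along a subsequence with $\chi_n\to\chi$ is a competitor with norm $\le\lim\mu_n=\mu(\rho,\chi)$, hence equals $F_{\rho,\chi}$.
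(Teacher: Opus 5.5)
Your lower bound and overall architecture match the paper's Part~I: Cauchy-kernel projection from a displaced contour, removal of the poles at $\mathcal D$, then normal families plus the uniqueness in Theorem~\ref{thm:extremal}. The upper bound, however, breaks at exactly the step you flagged.

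\textbf{First gap: the collar for $S_n$.} The claim that $|\Phi_{D_n}^{-1}|$ is uniformly bounded on $\Gamma^-$ is false. The bound $g_E(\cdot,a)\ge0$ holds only on $Y$. Across $\Gamma$ the reflected continuation behaves locally like $\log|z-\overline{z(a)}|-\log|z-z(a)|$, so $\Phi(\cdot,a)^{-1}$ acquires a pole at the mirror image of $a$. That pole crosses any fixed $\Gamma^-$ as $a\to\Gamma$.

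Zeros really do approach $\Gamma$. For $\chi\ne\chi_-=-\gamma_{R_\rho}$ the divisor $D_{\rho,\chi}$ is nonempty. On the other hand, continuity of $\mu$ and \eqref{eq:mu-formula} force $\sum_{z\in D_{\rho,\chi}}g_E(z)\to0$ as $\chi\to\chi_-$. So whenever $\{\chi_n\}$ accumulates at $\chi_-$ (e.g.\ in the Kronecker case), $S_n$ has no collar uniform in $n$. Compactness of the torus and continuity of $\chi\mapsto F_{\rho,\chi}$ cannot repair this for $F_n$ itself, because that continuity holds only in the interior.

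The missing idea is to change the input of the projection. Lemma~\ref{lem:delete-near-boundary-zeros} deletes the points of $D_n$ within $\delta$ of $\Gamma$. It restores the character with the nowhere-vanishing section \eqref{eq:nvlb} of small character (Lemma~\ref{lem:sw-near-character}). Since $|\psi|\le e^\varepsilon$ on $\Gamma$ and the renormalising constant $\Phi_{D^{\rm near}_{n,\delta}}(P_\infty)$ is $\ge1$, this costs at most a factor $e^\varepsilon$. The surviving zeros then lie in a fixed compact set, and Proposition~\ref{lem:analytic-projection} gives $M_{n,\rho}\le\operatorname{cap}(E)^n(e^\varepsilon\mu_n+C_\varepsilon r_\varepsilon^n)$, with $\varepsilon\downarrow0$ only at the end.

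Your compactness idea can be rescued in the same spirit. Project $\psi_{\chi_n-\chi^{(i)}}F_{\rho,\chi^{(i)}}$ for finitely many reference characters $\chi^{(i)}$, each of which continues across $\Gamma$ by Lemma~\ref{lem:widom-local-regularity}. Either way, the projected section is no longer $\operatorname{cap}(E)^n\Phi_E^nF_n$. So your claimed direct comparison with $F_n$ is available only under the separation hypothesis.

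\textbf{Two smaller slips.} First, as written, $\frac1{2\pi i}\oint_{\Gamma^-}C_{\mathcal D}(q,p)S_n(q)$ is the remainder $S_n-Q_n^0$, because the $q$-poles at $p$ and $P_\infty$ lie inside $Y_-$. The projection itself is $Q_n^0(p)=-\operatorname{Res}_{q=P_\infty}S_n(q)C_{\mathcal D}(q,p)$. Second, removing the poles at $\mathcal D$ needs neither a growth-controlled right inverse nor Bernstein--Walsh (which bounds growth off $E$, not on it). The principal-part coefficients are already $O(\operatorname{cap}(E)^nr^n)$, so fixed functions $r_j\in L(n_0P_\infty+\mathcal D)$ suffice.

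\textbf{Second gap: uniform convergence on $Y$.} The last clause of the theorem concerns the true extremal $T_{n,\rho}$, not your competitor. The normal-families argument controls $\mathcal T_n=\operatorname{cap}(E)^{-n}T_{n,\rho}\Phi_E^{-n}$ only on compact subsets of $Y^\circ$. A near-minimiser being uniformly close to $F_n$ says nothing about $T_{n,\rho}$ near $\Gamma$.

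The paper needs a separate quantitative argument (Part~II), with these ingredients:
\begin{enumerate}
\item[(i)] Under separation, the sharp estimate $0\le\operatorname{cap}(E)^{-n}M_{n,\rho}-\mu_n\le Cr^n$. This is where your direct projection of $F_n$ is legitimately used.
\item[(ii)] The probability measure $d\lambda_n=(2\pi i\mu_n)^{-1}\iota^*(F_n\eta_n)$ built from a dual extremal. For $W_n=\mathcal T_n/F_n$ it gives $\int_\Gamma W_n\,d\lambda_n=1$, hence $\int_\Gamma|W_n-1|^2d\lambda_n\le Cr^n$.
\item[(iii)] The mass bound $\lambda_n(I)\ge c|I|^{2\widehat g+1}$, obtained from the Neumann-product formula for $F_n\eta_n/d\mathcal G$.
\item[(iv)] The derivative bound $|dW_n/ds|\le Cn$, from Bernstein--Walsh and the maximum principle on either side of $\Gamma$ plus Cauchy estimates in a strip of width $\sim1/n$.
\end{enumerate}
A deviation $|W_n-1|\ge\varepsilon$ at one boundary point then persists on an arc of length $\sim1/n$. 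That arc carries $\lambda_n$-mass $\gtrsim n^{-(2\widehat g+1)}$, contradicting the exponential $L^2$ bound. Nothing in your proposal replaces this.

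Finally, do not rely on uniqueness of $T_{n,\rho}$ via strict convexity or Haar-type arguments. The sup norm is not strictly convex, and for $g\ge1$ the Haar condition for $L((n-1)P_\infty)$ can fail because of special divisors. The theorem does not need this uniqueness.
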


\noindent Proved in \S\ref{sec:proof-sw-part-ii}. The zero-separation hypothesis in the last clause is the one Widom
imposes in the final part of his proof of Theorem 8.3, and it is used here for the same purpose.

Taking $\rho\equiv1$ gives $R_\rho\equiv1$, $R_\rho(P_\infty)=1$ and $M_{n,1}=t_n(E,P_\infty)$, so
that \eqref{eq:sw-norm} reads $W_n(E,P_\infty)-\mu(L_E^{-n})\to0$. Combining this with
Theorem~\ref{thm:range} and with the elementary bound \eqref{eq:comparison} yields a
Totik--Widom bound on a compact Riemann surface.
\begin{remark}
The real-analyticity assumptions in Assumption \ref{ass1} are primarily
technical and quantitative. The analyticity of \(\Gamma\) allows the
Green function \(g_E\) to be reflected harmonically across the boundary,
and the Green factor \(\Phi_E\) to be continued holomorphically to a
fixed two-sided collar of \(\Gamma\); see, for example,
\cite[Section~7.5.2]{KrantzGuide}. The same applies to the outer factors
associated with a real-analytic weight \(\rho\). If the reflected
continuation is still denoted by \(g_E\), one may displace the boundary
to a fixed level curve
\[
        \Gamma_-=\{g_E=-\varepsilon\}
\]
on the reflected side. There
\[
        |\Phi_E|=e^{-\varepsilon}=:r<1,
\]
and the resulting approximation errors are \(O(r^n)\) after
normalization by the natural scale \(\operatorname{cap}(E)^n\).
The availability of a fixed collar and this geometric decay substantially
simplify the proof of the Szeg\H{o}--Widom asymptotics.

We do not expect real analyticity to be essential for the qualitative
conclusions. In the planar setting, Widom proved the corresponding
Szeg\H{o}--Widom asymptotics under the weaker \(C^{2+}\) boundary
hypothesis, using more delicate boundary approximation in place of
fixed-collar analytic continuation; see
\cite[Section~8, especially Lemma~8.2 and Theorem~8.3,
pp.~180--190]{Widom69}, and
\cite[Section~3]{CSZreview} for a modern account. This suggests that
the present results should persist under weaker boundary regularity,
although the geometric \(O(r^n)\) estimates would then have to be
replaced by qualitative \(o(1)\) estimates. We do not pursue this
extension here.
\end{remark}
\begin{corollary}\label{cor:TW}
Let $E$ be a finite union of pairwise disjoint closed discs with real analytic boundary in
$X\setminus\{P_\infty\}$, and set
\[
  \mathrm{PW}(E):=\sum_{\ell=1}^{\hat g}g_E(b_\ell)
  =\sum_{\nabla g_E(b)=0}g_E(b),\qquad \hat g=2g+p-1 .
\]
Then $W_N(E,P_\infty)\ge1$ for every admissible $N$, and
\[
  \limsup_{N\to\infty}W_N(E,P_\infty)\;\le\;\exp\bigl[\mathrm{PW}(E)\bigr] .
\]
In particular the Widom factors of such a set are bounded.
\end{corollary}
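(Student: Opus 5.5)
The plan is to combine three results already available: the comparison inequality \eqref{eq:comparison}, the Szeg\H{o}--Widom norm asymptotics \eqref{eq:sw-norm} of Theorem~\ref{thm:sw} with $\rho\equiv1$, and the range statement of Theorem~\ref{thm:range}.

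\emph{Lower bound.} For the lower bound $W_N(E,P_\infty)\ge1$, I would use \eqref{eq:comparison}. This gives $W_N(E,P_\infty)\ge\mu(L_E^{-N})$ whenever an admissible function of order $N$ exists. With $\rho\equiv1$ we have $R_\rho\equiv1$, so Theorem~\ref{thm:range} says every Widom constant lies in $[1,\exp\mathrm{PW}(E)]$. In particular $\mu(L_E^{-N})\ge1$.

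Alternatively, this bound can be seen directly without the range theorem. The section $F_N=\operatorname{cap}(E)^{-N}T\Phi_E^{-N}$ satisfies $F_N(P_\infty)=1$. The function $\log|F_N|$ is subharmonic on $Y^\circ$ and single-valued, since the character is unitary. It is bounded above on $\partial Y$ by $\log\bigl(\|T\|_E/\operatorname{cap}(E)^N\bigr)$. The maximum principle then gives $0\le\log\bigl(\|T\|_E\operatorname{cap}(E)^{-N}\bigr)$, and infimising over admissible $T$ yields $W_N\ge1$. Equivalently, this is the Bernstein--Walsh inequality of Theorem~\ref{thm:surface-BW} evaluated through the expansion \eqref{eq:gE-at-Pinfty} at $P_\infty$. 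I would include this remark so the lower bound does not depend on the analytic machinery.

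\emph{Upper bound.} Theorem~\ref{thm:sw} with $\rho\equiv1$ gives $M_{N,1}=t_N(E,P_\infty)$. Here the maximum principle is used to pass from $\Gamma$ to $E$: the admissible functions are holomorphic on the discs $\Omega_j$. The theorem then yields $W_N(E,P_\infty)=\mu(L_E^{-N})+o(1)$. Theorem~\ref{thm:range} with $R_\rho\equiv1$ bounds $\mu(\rho,\chi)\le\exp\bigl(\sum_{\ell}g_E(b_\ell)\bigr)$ uniformly over all $\chi\in\mathcal X(Y)$, and in particular for $\chi=\chi_N=-N\gamma_E$. Taking $\limsup$ gives $\limsup_N W_N\le\exp[\mathrm{PW}(E)]$.

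Two checks remain. First, the identification $\sum_\ell g_E(b_\ell)=\sum_{\nabla g_E(b)=0}g_E(b)$, counted with multiplicity, holds because the zeros of $dG=2\partial g_E$ are exactly the critical points of $g_E$. Second, boundedness follows: $W_N$ is finite for each admissible $N$ (in particular all $N\ge2g$), and the $\limsup$ is finite.

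\emph{Main obstacle.} The only delicate point is bookkeeping. The character $\chi_N$ and the bundle $L_E^{-N}$ of Theorem~\ref{thm:sw} must be exactly the objects to which Theorem~\ref{thm:range} applies; they are, since the range theorem covers the whole torus. The other point is that the $o(1)$ in \eqref{eq:sw-norm} is absolute, not relative. Both are immediate here, so the corollary is essentially a direct combination of the three results.
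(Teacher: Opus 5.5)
Your proposal is correct and is the paper's own argument: set $\rho\equiv1$ (so $R_\rho\equiv1$ and $M_{n,1}=t_n(E,P_\infty)$), read \eqref{eq:sw-norm} as $W_n(E,P_\infty)-\mu(L_E^{-n})\to0$, and combine this with the interval of Theorem~\ref{thm:range} and the comparison bound \eqref{eq:comparison}. Your alternative maximum-principle derivation of $W_N\ge1$ is just the Bernstein--Walsh inequality of Theorem~\ref{thm:surface-BW} evaluated at $P_\infty$, which the paper also proves.
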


\noindent For $g=0$ this is Widom's bound for a system of $p$ curves, with $p-1$ critical points. On
a surface of genus $g$ the same sum runs over $2g+p-1$ critical points, and the extra $2g$ of them
are contributed by the topology of $X$.

Finally, the sequence $n\mapsto\mu_n$ need not converge, and its behaviour is read off the orbit of
$\gamma_E$ on the character torus.

\begin{corollary}\label{cor:limitpoints}
Choose a basis $c_1,\dots,c_{\hat g}$ of $H_1(Y,\mathbb Z)$ and write
$\gamma_E(c_j)=e^{2\pi i\theta_j}$. Under the hypotheses of Theorem~\ref{thm:sw}, the set of limit
points of $M_{n,\rho}/\operatorname{cap}(E)^n$ is a finite union of pairwise disjoint closed
subintervals of the interval \eqref{eq:range}. It is the whole of \eqref{eq:range} when
$1,\theta_1,\dots,\theta_{\hat g}$ are linearly independent over $\mathbb Q$, and it is a set of at
most $q$ points when every $\theta_j$ lies in $\tfrac1q\mathbb Z$.
\end{corollary}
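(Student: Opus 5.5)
By Theorem~\ref{thm:sw} with the weight $\rho$ we have $M_{n,\rho}/\operatorname{cap}(E)^n=\mu(\rho,\chi_n)+o(1)$, where $\chi_n=-n\gamma_E$. A sequence that differs from another by $o(1)$ has the same set of limit points, so it suffices to determine the limit points of $n\mapsto\mu(\rho,-n\gamma_E)$. The plan has two ingredients: the continuity of $\chi\mapsto\mu(\rho,\chi)$ on the torus $\mathcal X(Y)\cong\mathbb T^{\hat g}$, and the closure of the orbit $\{-n\gamma_E\}$.

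\textbf{Step 1: continuity.} I would first prove that $\chi\mapsto\mu(\rho,\chi)$ is continuous on $\mathcal X(Y)$, using the duality of Theorem~\ref{thm:duality}.
\begin{itemize}
\item \emph{Upper semicontinuity} comes from the $\mathcal H^\infty$ side. Given an extremal $F_{\rho,\chi}$, I transport it to a nearby character $\chi'$ by multiplying by a multiplicative harmonic factor $e^{h+i\tilde h}$ with character $\chi'\chi^{-1}$. The harmonic function $h$ can be chosen with $\|h\|_\infty\to0$ as $\chi'\to\chi$; for example, take $h$ as a combination of $\hat g$ fixed harmonic measures and periods, with coefficients linear in the angle differences. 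I then normalise the value at $P_\infty$.
\item \emph{Lower semicontinuity} comes symmetrically from the $\mathcal H^1$ side: perturb a near-optimal $\eta$ by the same kind of factor.
\end{itemize}
This gives $|\mu(\rho,\chi)-\mu(\rho,\chi')|\le C\,\mathrm{dist}(\chi,\chi')\,\mu(\rho,\chi)$ locally, hence continuity. If the paper proves continuity later (the Hayashi--Hasumi question), I would cite that instead.

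\textbf{Step 2: the orbit closure.} In the coordinates given by the basis $c_j$, the character $-n\gamma_E$ is the point $-n(\theta_1,\dots,\theta_{\hat g})$ mod $\mathbb Z^{\hat g}$. By Kronecker--Weyl, the closure $K$ of $\{n\theta\}$ is a closed subgroup of $\mathbb T^{\hat g}$. Hence $K$ is a finite union of cosets of a subtorus $K_0$, and the number of cosets is finite. Moreover, every point of $K$ is a limit point of the sequence $n\theta$, because $K$ has no isolated points unless it is finite. In the finite case the sequence is periodic, so each point recurs infinitely often.

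\textbf{Step 3: limit points of $\mu$.} By continuity and compactness, the limit points of $\mu(\rho,-n\gamma_E)$ are exactly $\mu(\rho,K)$.
\begin{itemize}
\item Each coset of $K_0$ is connected, so its image is a closed interval, degenerate if $K_0$ is trivial. The image is therefore a finite union of closed intervals; merging overlapping intervals makes them pairwise disjoint.
\item By Theorem~\ref{thm:range} this image lies in the interval \eqref{eq:range}.
\item If $1,\theta_1,\dots,\theta_{\hat g}$ are rationally independent, then $K=\mathbb T^{\hat g}$, and Theorem~\ref{thm:range} says $\mu$ fills \eqref{eq:range} exactly.
\item If every $\theta_j\in\frac1q\mathbb Z$, then $q\theta\equiv0$. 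So $K$ consists of at most $q$ points, giving at most $q$ limit values.
\end{itemize}

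The main obstacle is Step 1. Everything hinges on a uniform $L^\infty$ control of the harmonic correction realising a small change of character. That control needs a fixed set of $\hat g$ harmonic functions, smooth up to $\Gamma$, whose conjugate periods span $\mathbb R^{\hat g}$. I would construct these from the harmonic measures of $\Gamma_2,\dots,\Gamma_p$ together with $2g$ harmonic differentials on $Y$. Real analyticity of $\Gamma$ makes this routine.
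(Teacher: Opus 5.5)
Your proposal is correct and follows the paper's proof step for step. The shared route is: reduce via \eqref{eq:sw-norm} to the limit points of $\mu(\rho,\chi_n)$, use continuity of $\chi\mapsto\mu(\rho,\chi)$, identify the orbit closure as a closed subgroup of $\mathfrak X(Y)$ consisting of finitely many translates of a subtorus, and conclude with Kronecker's theorem and Theorem~\ref{thm:range}. The continuity you plan to establish in Step~1 is already proved in the paper's proof of Theorem~\ref{thm:range} by exactly your transport argument (\eqref{eq:range-continuity-comparison}, via $q_\chi=\psi_\chi/\psi_{\chi_0}$); applying that transport in both directions also makes your detour through the dual problem unnecessary.
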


This is where the surface departs from the plane, and the departure is not cosmetic. In the plane
$\hat g=p-1$, and for a single Jordan region $p=1$ the character torus is trivial: there is only one
character $\mu$, and one is back at Faber's theorem \eqref{eq:faber}, where
$W_n\to1$ and the extremal polynomials converge after normalisation. On a surface of genus $g\ge1$ a
single disc already gives $\hat g=2g$, so that even the simplest possible $E$ carries a
$2g$-dimensional torus of characters and a nondegenerate interval \eqref{eq:range}. There is no
analogue of Faber's theorem, and the Widom factors of a single analytic disc can oscillate almost
periodically.  We conclude the paper in \S\ref{sec:genus-one-examples} with some worked out examples in genus $1$, with the aid of an explicit basis of admissible functions given by Weierstrass $\wp$-function.
\begin{remark}
We make no claim of novelty for the underlying \(\mathcal{H}^{\infty}\) extremal
theory. Theorem \ref{thm:duality} is essentially the rank-one case of Widom's
bundle-valued duality theorem
\cite[Section~3]{Widom71a}, while the principal ingredients of
Theorem \ref{thm:extremal} and the mechanism behind Theorem \ref{thm:range} are already present
in Widom's product formula and zero equations
\cite[Section~5]{Widom69}. Thus novelty of the present work does not lie in the abstract
$H^\infty$ duality theory by itself, but in its connection with the
scalar meromorphic Chebyshev problem on a compact Riemann surface.

The proof of Theorem~\ref{thm:sw} follows the strategy of
\cite[Section~8]{Widom69}, but its extension to positive genus is not
immediate. In particular, the  Cauchy kernel $C_\mathcal{D}$ carries an auxiliary
divisor whose unwanted poles must be removed, and the deletion of zeros
approaching the boundary must be accompanied by a correction of the
character.

We restrict here to contractible closed boundary curves. Arcs require
a two-sided analysis and additional control at their endpoints; this is
already substantially more delicate in the plane
\cite[Section~11]{Widom69}, as shown by
\cite{TotikYuditskii,Eichinger,Alpan}. Homologically nontrivial curves
present a different obstruction: after cutting the surface, the two
boundary traces must satisfy global matching and period conditions in
order to descend to a meromorphic function on \(X\). Finally, one may consider the corresponding $L^2$ extremal problem
with respect to a prescribed measure $d\nu$ on $\Gamma$, replacing
the $L^\infty$ norm. We leave these
extensions for future work.
\end{remark}

\section{Characters, Green factors, and correction sections}
\label{sec:characters}

Let
\[
        \cX(X):=\operatorname{Hom}(H_1(X,\Z),\T),
        \qquad
        \T:=\{w\in\C:|w|=1\}.
\]
For \(\chi\in\cX(X)\), let \(\cL_\chi\) denote the associated flat unitary line
bundle.  A section of \(\cL_\chi\) may be viewed on the universal cover as a
meromorphic function \(f\) satisfying
\[
        f(\gamma p)=\chi(\gamma)f(p)
\]
for deck transformations \(\gamma\).  Since the character is unitary, \(|f|\)
is single-valued.

Choose a simply connected coordinate disc \(U_\infty\) about \(P_\infty\), and
choose one lift of \(U_\infty\) to the universal cover of \(X\).  This gives
compatible unitary flat frames for all \(\cL_\chi\) over \(U_\infty\).  All
Laurent expansions below use these frames, and all absolute values use the flat
unitary metrics.  The choices are compatible with tensor products and dual
bundles.
In particular,
\[
        \cL_\chi\otimes\cL_\psi\simeq\cL_{\chi\psi},
        \qquad
        \cL_\chi^\vee\simeq\cL_{\chi^{-1}},
\]
with the corresponding identifications of the chosen frames.

On the sphere, the factor \(z-q\) has divisor \(q-\infty\), and its absolute
value is determined by the logarithmic kernel.  The next lemma gives the
corresponding factor on \(X\).  In general it is a section of a flat bundle,
not a single-valued function; compare \cite[Thm.~III.9.10]{FarkasKra}.

\begin{lemma}
\label{lem:compact-green-factor}
For each \(q\in X\setminus\{P_\infty\}\), there are a character
\(\chi_q\in\cX(X)\) and a meromorphic section \(B_q\) of \(\cL_{\chi_q}\) such
that
\[
        \operatorname{div}(B_q)=q-P_\infty,
\]
\[
        B_q(p)=\zeta(p)^{-1}+O(1),\qquad p\to P_\infty,
\]
and
\[
        |B_q(p)|=e^{-G(p,q)},\qquad p\in X\setminus\{q,P_\infty\}.
\]
\end{lemma}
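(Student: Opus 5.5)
The plan is to build $B_q$ as the exponential of a multivalued holomorphic function whose real part is $-G(\cdot,q)$. Fix $q\neq P_\infty$. By Definition~\ref{def:green}, the function $u:=-G(\cdot,q)$ is harmonic on $X\setminus\{q,P_\infty\}$. Near $q$ it satisfies $u=\log|z|+O(1)$, and near $P_\infty$ it satisfies $u=-\log|\zeta|+o(1)$. Hence $\omega:=2\partial u$ is a meromorphic differential on $X$, an abelian differential of the third kind. It has simple poles at $q$ and $P_\infty$ with residues $+1$ and $-1$ respectively. Its real parts are exact, because $\operatorname{Re}\omega=du$ and $u$ is single-valued. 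So every period of $\omega$ along a closed curve in $X\setminus\{q,P_\infty\}$ is purely imaginary, say $\oint_c\omega=i\theta(c)$ with $\theta(c)\in\mathbb R$.

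Next I would define, on the universal cover of $X\setminus\{q,P_\infty\}$ (or after cutting along a canonical homology basis together with a slit from $q$ to $P_\infty$), the function
\[
  B_q(p):=\exp\Bigl(c_0+\int_{p_0}^{p}\omega\Bigr),
\]
with a constant $c_0$ still to be chosen. Then $|B_q|=e^{u+\mathrm{const}}$. Analytic continuation around a closed curve $c$ multiplies $B_q$ by $e^{i\theta(c)}\in\mathbb T$. A small loop around $q$ or around $P_\infty$ contributes $e^{\pm 2\pi i}=1$, so the continuation factor depends only on the class of $c$ in $H_1(X,\mathbb Z)$. This gives a well-defined unitary character $\chi_q(c):=e^{i\theta(c)}$, and $B_q$ is a section of $\cL_{\chi_q}$. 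Since the residues are integers, $B_q$ extends meromorphically across $q$ with a simple zero and across $P_\infty$ with a simple pole. This gives $\operatorname{div}B_q=q-P_\infty$.

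Finally comes the normalisation. I would choose $\operatorname{Re}c_0$ so that $|B_q|=e^{-G(\cdot,q)}$ exactly. In the fixed frame over $U_\infty$ we then have $B_q=a\zeta^{-1}(1+O(\zeta))$ for some $a\neq 0$. Property (iii), $G=\log|\zeta|+o(1)$, forces $|B_q\zeta|\to 1$, so $|a|=1$. Adjusting $\operatorname{Im}c_0$ by a unimodular rotation then makes $a=1$, and this rotation does not change $|B_q|$. The expansion $B_q=\zeta^{-1}+O(1)$ follows.

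The only delicate point is the bookkeeping in the last step. One has to check that the unimodular constant and the choice of lift of $U_\infty$ are compatible with the fixed frames introduced before the lemma. One also has to check that the $o(1)$ in (iii) is exactly what pins down $|a|=1$. Everything else is standard, as in \cite[Thm.~III.9.10]{FarkasKra}.
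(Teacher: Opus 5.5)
Your proposal is correct and is essentially the paper's argument. Integrating the third-kind differential $2\partial\bigl(-G(\cdot,q)\bigr)$ on the universal cover of $X\setminus\{q,P_\infty\}$ is the same as exponentiating $-(G+iG^*)$ with a harmonic conjugate $G^*$, and the paper likewise reads off the simple zero and simple pole from the local expansions, uses (iii) to show the leading coefficient at $P_\infty$ is unimodular before rotating it to $1$, and notes that the multipliers of small loops around $q$ and $P_\infty$ are trivial, so the character factors through $H_1(X,\mathbb Z)$.
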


\begin{proof}
Put \(X_q^\circ=X\setminus\{q,P_\infty\}\), and lift
\(G(\,\cdot\,,q)\) to the universal cover of \(X_q^\circ\).  Choose a
harmonic conjugate \(G^*(\,\cdot\,,q)\) so that \(G+iG^*\) is holomorphic,
and set
\[
        \widetilde B_q(p):=\exp\{-G(p,q)-iG^*(p,q)\}.
\]
This is holomorphic and non-vanishing on the covering surface, and
\[
        |\widetilde B_q(p)|=e^{-G(p,q)}.
\]

The local expansions also determine its behaviour at the punctures.  If \(z\)
is a coordinate centred at \(q\), then, after choosing a branch of \(\log z\),
\[
        G(p,q)+iG^*(p,q)=-\log z(p)+h_q(p),
\]
where \(h_q\) is holomorphic near \(q\).  Hence
\[
        \widetilde B_q(p)=z(p)e^{-h_q(p)},
\]
so \(\widetilde B_q\) has a simple zero at \(q\).  Similarly, near
\(P_\infty\),
\[
        G(p,q)+iG^*(p,q)=\log\zeta(p)+h_\infty(p),
\]
with \(h_\infty\) holomorphic and \(\operatorname{Re}h_\infty(P_\infty)=0\).
Thus
\[
        \widetilde B_q(p)=\zeta(p)^{-1}e^{-h_\infty(p)}.
\]
Its leading coefficient has modulus one.  Multiplying by a constant in
\(\T\), we make this coefficient equal to \(1\).

For a deck transformation \(\gamma\), the quotient
\[
        \frac{\widetilde B_q(\gamma p)}{\widetilde B_q(p)}
\]
is holomorphic and has modulus one, so it is a constant \(\chi_q(\gamma)\in
\T\).  These constants form a character of \(\pi_1(X_q^\circ)\).  The two
local formulas above show that small loops around \(q\) and \(P_\infty\) have
multiplier one.  The character therefore factors through \(\pi_1(X)\), and,
since \(\T\) is abelian, through \(H_1(X,\Z)\).  The function
\(\widetilde B_q\) consequently descends to a meromorphic section of
\(\cL_{\chi_q}\).  Its only zero and pole are the ones just found, which proves
all three assertions.
\end{proof}
We remark that, under the Abel map, the divisor $q - P_\infty$ (modulo linear equivalence) has degree zero and hence defines a unique point of $\mathrm{Pic}^0(X)$.
For an effective divisor \(D=q_1+\cdots+q_n\) in
\(X\setminus\{P_\infty\}\), define
\[
        B_D:=\prod_{j=1}^n B_{q_j},\qquad
        \chi_D:=\prod_{j=1}^n\chi_{q_j}.
\]
Then \(B_D\) is a section of \(\cL_{\chi_D}\),
\[
        \operatorname{div}(B_D)=D-nP_\infty,
\]
\[
        B_D(p)=\zeta(p)^{-n}+O(\zeta(p)^{-n+1}),
        \qquad p\to P_\infty,
\]
and
\[
        |B_D(p)|=\exp\left(-\sum_{j=1}^n G(p,q_j)\right).
\]

Here a section of \(\cL_\chi\otimes\cO(mP_\infty)\) is viewed as a
meromorphic section of \(\cL_\chi\) with no pole away from \(P_\infty\) and
with pole order at most \(m\) there.

\begin{lemma}
\label{lem:uniform-correction}
Fix an integer \(m\ge2g\).  There is a constant \(C_E>0\) such that for every
\(\chi\in\cX(X)\) there exists
\[
        s_\chi\in H^0\bigl(X,\cL_\chi\otimes\cO(mP_\infty)\bigr)
\]
with
\[
        s_\chi(p)=\zeta(p)^{-m}+O(\zeta(p)^{-m+1}),
        \qquad p\to P_\infty,
\]
and
\[
        \sup_E |s_\chi|\le C_E .
\]
\end{lemma}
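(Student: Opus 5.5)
Fix \(m\ge2g\). The plan is to produce \(s_\chi\) by Riemann--Roch for each \(\chi\), and then get the uniform bound from compactness of the character torus \(\cX(X)\cong\T^{2g}\) together with continuity of a suitable normalised choice. For a single \(\chi\), the bundle \(\cL_\chi\otimes\cO(mP_\infty)\) has degree \(m\ge2g>2g-2\), so by Riemann--Roch
\[
h^0\bigl(\cL_\chi\otimes\cO(mP_\infty)\bigr)=m-g+1,
\qquad
h^0\bigl(\cL_\chi\otimes\cO((m-1)P_\infty)\bigr)=m-g ,
\]
since \(m-1\ge2g-1>2g-2\) as well. Hence there is a section with a pole of exact order \(m\) at \(P_\infty\). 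Dividing by the leading coefficient in the fixed frame over \(U_\infty\) gives an \(s_\chi\) with expansion \(\zeta^{-m}+O(\zeta^{-m+1})\). This settles existence for each \(\chi\).

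For uniformity, I would make a canonical choice. Define \(s_\chi\) as the section with leading term \(\zeta^{-m}\) that minimises the \(L^2\) norm over a fixed compact set \(K\supset E\) (for instance the complement of \(U_\infty\)), computed in the flat metric. Equivalently, \(s_\chi\) is the leading normalised section that is \(L^2(K)\)-orthogonal to \(H^0(\cL_\chi\otimes\cO((m-1)P_\infty))\). This choice is unique because a nonzero holomorphic section cannot vanish on \(K\).

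To vary \(\chi\) continuously, I would realise all \(\cL_\chi\) on one topological line bundle, trivialising \(\cL_\chi\) as the trivial smooth bundle with holomorphic structure \(\bar\partial+\alpha_\chi\), where \(\alpha_\chi\) is a \((0,1)\)-form depending continuously (indeed real-analytically) on \(\chi\). In this picture the spaces \(H^0\) have constant dimension \(m-g+1\) and \(m-g\), so they form continuous vector bundles over the torus, by standard semicontinuity and Grauert's theorem. The fixed frames over \(U_\infty\) can be taken continuous in \(\chi\). The leading-coefficient functional is then continuous and nonvanishing on the quotient line bundle \(H^0(m)/H^0(m-1)\). It follows that the \(L^2\)-minimal normalised section depends continuously on \(\chi\) in the \(C^0(K)\) topology, and \(\chi\mapsto\sup_E|s_\chi|\) is continuous on the compact torus. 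That makes it bounded by some \(C_E\).

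I expect the main obstacle to be the continuity step: rigorously identifying the family \(\{\cL_\chi\}\) with a holomorphically varying family, with frames near \(P_\infty\) compatible with those chosen in the text. Changing the lift of \(U_\infty\) alters the frame only by a unimodular constant, which does not affect \(|s_\chi|\) but does affect the normalisation. I would handle this locally: near a fixed \(\chi_0\), write \(\chi=\chi_0\exp(2\pi i\langle t,\cdot\rangle)\) and multiply by \(\exp\) of a harmonic-differential integral, which is a unimodular-periods multiplier that is smooth in \(t\) and equals \(1\) at the base of \(U_\infty\). This transports sections of \(\cL_{\chi_0}\) to sections of a smoothly twisted bundle, where an elliptic \(\bar\partial\)-correction with bounds uniform in small \(t\) gives the local continuity. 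A cheaper alternative avoids continuity altogether: local boundedness of the minimal normalised norm near each \(\chi_0\), obtained by perturbing a fixed \(s_{\chi_0}\) by this multiplier and correcting. Either way, a finite subcover of the torus then yields \(C_E\).
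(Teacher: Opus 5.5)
Your proposal is correct and is essentially the paper's argument. Riemann--Roch makes the leading-coefficient functional onto for every character; Grauert's constant-dimension theorem supplies sections depending continuously on the character (in the paper holomorphically, after extending to complex characters $(\C^*)^{2g}$ and normalising locally by $G_\lambda/\ell(G_\lambda)$); and compactness of $\T^{2g}$ with a finite subcover gives $C_E$. Your canonical $L^2$-minimal normalisation is a harmless variant that makes $|s_\chi|$ globally well defined and avoids patching, though to quote Grauert literally you should complexify the parameter, as the paper does, rather than work over the real torus alone.
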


The fixed data \(X\), \(P_\infty\), \(\zeta\), the local frames, and \(m\) are
suppressed from the notation \(C_E\); the point is that the constant is
independent of \(\chi\).

\begin{proof}
If \(g=m=0\), the character group is trivial, and the constant section \(1\)
proves the result. Assume henceforth that \((g,m)\ne(0,0)\).

We first fix a complex character \(\chi\). For \(k=m-1,m\), Riemann-Roch
gives
\[
 H^1\bigl(X,\cL_\chi\otimes\cO(kP_\infty)\bigr)
 \simeq H^0\bigl(X,K_X\otimes\cL_\chi^{-1}
                       \otimes\cO(-kP_\infty)\bigr)^*=0,
\]
because the bundle in the last term has degree \(2g-2-k<0\). Riemann--Roch
therefore yields
\[
 \dim H^0\bigl(X,\cL_\chi\otimes\cO(mP_\infty)\bigr)=m-g+1,
 \qquad
 \dim H^0\bigl(X,\cL_\chi\otimes\cO((m-1)P_\infty)\bigr)=m-g.
\]
Let \(\ell\) be the coefficient of \(\zeta^{-m}\) at \(P_\infty\), computed
in the fixed local frame. Its kernel on
\(H^0(X,\cL_\chi\otimes\cO(mP_\infty))\) is precisely
\(H^0(X,\cL_\chi\otimes\cO((m-1)P_\infty))\), and hence has codimension
one. Thus \(\ell\) is onto, so there is a section
\(s\in H^0(X,\cL_\chi\otimes\cO(mP_\infty))\) with \(\ell(s)=1\).

It remains to make this choice locally continuous in \(\chi\). Choose a
basis of \(H_1(X,\Z)\), identify the space of complex characters with
\(T:=(\C^*)^{2g}\), and write \(\chi_\lambda\) for the character
corresponding to \(\lambda\in T\). The unitary characters correspond to
the compact subtorus \(\mathbb T^{2g}:=(S^1)^{2g}\).

 The bundles
\(\cL_{\chi_\lambda}\otimes\cO(mP_\infty)\)  form a holomorphic
family over \(T\). Since the projection \(X\times T\to T\) is proper and
the dimension of
\(H^0(X,\cL_{\chi_\lambda}\otimes\cO(mP_\infty))\) is constant,
\(r:=m-g+1\),  \cite[Theorem, p.~211]{GR84} shows that, near every
\(\lambda^0\in T\), one can choose sections
\(S_1(\lambda),\ldots,S_r(\lambda)\), depending holomorphically on
\(\lambda\), which form a basis of this space for every nearby \(\lambda\).

Fix \(\lambda^0\in\mathbb T^{2g}\), and choose
\(s^0\in H^0(X,\cL_{\chi_{\lambda^0}}\otimes\cO(mP_\infty))\) with
\(\ell(s^0)=1\). On a neighbourhood \(U\) of \(\lambda^0\), write
\[
 s^0=\sum_{j=1}^r c_jS_j(\lambda^0),
 \qquad
 G_\lambda:=\sum_{j=1}^r c_jS_j(\lambda).
\]
The map 
\(\lambda\mapsto\ell(G_\lambda)\) is holomorphic, and it takes the value
\(1\) at \(\lambda^0\). After shrinking \(U\), we may assume that it does
not vanish there. Hence
\[
 s_\lambda:=\frac{G_\lambda}{\ell(G_\lambda)},
 \qquad \lambda\in U,
\]
depends holomorphically on \(\lambda\), belongs to
\(H^0(X,\cL_{\chi_\lambda}\otimes\cO(mP_\infty))\), and satisfies
\(\ell(s_\lambda)=1\).

For \(\lambda\in\mathbb T^{2g}\), the character \(\chi_\lambda\) is
unitary, so \(|s_\lambda|\) is single-valued. Shrinking the unitary part of
\(U\) once more, we obtain a neighbourhood \(U_{\lambda^0}\) of
\(\lambda^0\) in \(\mathbb T^{2g}\), with closure contained in \(U\), on
which \((p,\lambda)\mapsto|s_\lambda(p)|\) is continuous. Compactness of
\(E\times\overline{U_{\lambda^0}}\) gives a finite bound for this function.

The sets \(U_{\lambda^0}\) cover \(\mathbb T^{2g}\). Choose a finite
subcover \(U_1,\ldots,U_N\), and denote the corresponding normalized local
families by \(s^{(1)}_\lambda,\ldots,s^{(N)}_\lambda\). Then
\[
 C_E:=\max\left\{1,
   \max_{1\le j\le N}\ \max_{(p,\lambda)\in
       E\times\overline{U_j}}|s^{(j)}_\lambda(p)|\right\}<\infty.
\]
Given \(\chi=\chi_\lambda\in\cX(X)\), choose \(j\) with
\(\lambda\in U_j\) and set \(s_\chi:=s^{(j)}_\lambda\). Since
\(\ell(s_\chi)=1\),
\[
 s_\chi(p)=\zeta(p)^{-m}+O\bigl(\zeta(p)^{-m+1}\bigr)
 \quad (p\to P_\infty),
 \qquad
 \sup_E|s_\chi|\le C_E.
\]
The constant \(C_E\) is independent of \(\chi\), as required.
\end{proof}
\section{Proof of Theorem \ref{thm:main-root}}
\label{sec:chebyshev}

Let \(L(NP_\infty)\) denote the space of meromorphic functions on \(X\) with no
poles except possibly at \(P_\infty\), and with pole order at most \(N\).
Admissible functions of order \(N\) were defined in the introduction.  For
\(N\ge2g\), Serre duality and Riemann--Roch give
\[
        \dim L(NP_\infty)=N-g+1,
        \qquad
        \dim L((N-1)P_\infty)=N-g.
\]
Thus the leading coefficient at \(P_\infty\) can be prescribed, and admissible
functions exist for every \(N\ge2g\).

\begin{theorem}[Bernstein--Walsh on \(X\)]
\label{thm:surface-BW}
If \(f\in L(NP_\infty)\) is admissible of order \(N\), then
\[
        |f(p)|\le \|f\|_E e^{N g_E(p)},
        \qquad p\in X\setminus(E\cup\{P_\infty\}).
\]
In particular
\[
        t_N(E,P_\infty)\ge \capc(E)^N .
\]
\end{theorem}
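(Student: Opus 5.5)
Consider the function
\[
u(p):=\log|f(p)|-\log\|f\|_E-N g_E(p),
\]
defined on $\Omega\setminus\{P_\infty\}$ with $\Omega=X\setminus E$. The plan is to show that $u\le0$ there, which is exactly the asserted inequality. We may assume $\|f\|_E<\infty$ and $f\not\equiv0$; this is automatic because $f$ is admissible and $E$ is a compact set avoiding the pole.

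First, $u$ is subharmonic on $\Omega\setminus\{P_\infty\}$. Indeed, $\log|f|$ is subharmonic since $f$ is holomorphic off $P_\infty$, and $g_E$ is harmonic on $X\setminus(E\cup\{P_\infty\})$.

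Next, $u$ extends across $P_\infty$. By admissibility, $\log|f(p)|=-N\log|\zeta(p)|+O(|\zeta(p)|)$. By \eqref{eq:gE-at-Pinfty}, $Ng_E(p)=-N\log|\zeta(p)|-N\log\capc(E)+o(1)$. The logarithmic singularities cancel, so $u$ is bounded near $P_\infty$. A subharmonic function that is bounded above near an isolated point extends subharmonically across it, and the extension has the value
\[
u(P_\infty)=N\log\capc(E)-\log\|f\|_E .
\]
This cancellation is precisely where the normalisation (iii) of Definition~\ref{def:green} and the choice of leading coefficient $1$ are used.

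Now compare boundary values. We have $u$ subharmonic on the open set $\Omega$, which contains $P_\infty$. Near $E$, $\log|f|\le\log\|f\|_E$ by continuity of $f$, and $g_E\ge0$. Hence $\limsup u\le0$ at every point of $\partial\Omega\subset E$ at which $g_E\to0$, that is, at all regular points, so quasi-everywhere. Since $u$ is bounded above on $\Omega$ ($\Omega$ is relatively compact in $X$ and $u$ is now subharmonic through $P_\infty$), the extended maximum principle applies: boundary inequalities holding off a polar set suffice for functions bounded above (see \cite[Thm.~3.6.9]{Ransford}; it localises to the surface). This gives $u\le0$ on $\Omega$. Alternatively, one can cite the domination principle for the kernel $G$ from \cite{BGK23}. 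One must apply this on each component of $\Omega$. Components not containing $P_\infty$ are handled in the same way, and on them even more simply, because no singularity is present.

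Evaluating $u\le0$ at $P_\infty$ gives $\|f\|_E\ge\capc(E)^N$. Taking the infimum over all admissible $f$ yields $t_N(E,P_\infty)\ge\capc(E)^N$.

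The main obstacle is the maximum-principle step. There we need $g_E\to0$ at the boundary only quasi-everywhere, so we must invoke the extended maximum principle on a relatively compact open subset of a Riemann surface, with $E$ allowed to be an arbitrary non-polar compact set. The key input making this legitimate is that $u$ is bounded above, which is exactly what the removability at $P_\infty$ provides.
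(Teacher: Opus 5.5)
Your proof is correct and follows the paper's argument: the same function $\log|f|-Ng_E$, removability at $P_\infty$ from the cancellation of the two expansions, the maximum principle on each component of $X\setminus E$, and evaluation at $P_\infty$. What you call the main obstacle is not one. Since $g_E\ge0$ on $X\setminus E$, you have $u\le\log|f|-\log\|f\|_E$, so $\limsup u\le0$ at \emph{every} point of $\partial\Omega$ by continuity of $f$ near $E$. The ordinary maximum principle therefore suffices, which is how the paper argues, and you need neither regular points nor the extended maximum principle.
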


\begin{proof}
The function
\[
        u(p):=\log|f(p)|-Ng_E(p)
\]
is subharmonic on \(X\setminus(E\cup\{P_\infty\})\).  The two expansions at
\(P_\infty\) give
\[
\begin{split}
        u(p)
        &=\bigl(-N\log|\zeta(p)|+o(1)\bigr)\\
        &\quad
        -N\bigl(-\log|\zeta(p)|-\log\capc(E)+o(1)\bigr)\\
        &=N\log\capc(E)+o(1).
\end{split}
\]
Thus \(u\) extends across \(P_\infty\).  On \(X\setminus E\),
Frotman conditions gives \(g_E\ge0\), and hence
\[
        u(p)\le\log|f(p)|.
\]

The maximum principle, applied on each component of \(X\setminus E\), now
gives
\[
        u\le \log\|f\|_E
\]
on \(X\setminus E\).  This is the asserted inequality away from
\(P_\infty\). Letting \(p\to P_\infty\) gives
\[
        \capc(E)^N\le \|f\|_E.
\]
Taking the infimum over admissible \(f\) proves the lower bound.
\end{proof}
 \begin{theorem}
\label{thm:fekete-diameter}
For \(n\ge2\), define
\[
        \mathcal E_n(E):=
        \min_{q_1,\ldots,q_n\in E}
        \sum_{1\le i<j\le n}G(q_i,q_j),
\]
and
\[
        \delta_n(E):=
        \exp\left(-\frac{2\mathcal E_n(E)}{n(n-1)}\right).
\]
Any minimising $n$ points $\{q_1,\dots, q_n\}$ is called the $n$-th Fekete set. Then \(\delta_n(E)\) is non-increasing and
\[
        \lim_{n\to\infty}\delta_n(E)=\capc(E).
\]
\end{theorem}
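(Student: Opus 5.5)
We follow the classical Fekete argument, with the bipolar kernel $G$ in place of $\log\frac1{|z-w|}$. The argument has three steps: monotonicity, an upper bound via the energy, and a lower bound via discrete measures. Throughout, we use that $G$ is lower semicontinuous and bounded below on $E\times E$. Indeed, $E$ is compact and avoids $P_\infty$, so the only singularity of $G$ on $E\times E$ is the $+\infty$ on the diagonal. Hence the minimum defining $\mathcal E_n(E)$ is attained, and it is finite because $E$ is non-polar, so in particular infinite.

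\textbf{Monotonicity.} Let $q_1,\dots,q_{n+1}$ be an $(n+1)$-point Fekete set. For each $k$, deleting $q_k$ leaves $n$ points, so
\[
  \sum_{i<j,\ i,j\ne k}G(q_i,q_j)\ \ge\ \mathcal E_n(E).
\]
Summing over $k=1,\dots,n+1$, each pair is counted $n-1$ times. This gives $(n-1)\mathcal E_{n+1}(E)\ge (n+1)\mathcal E_n(E)$, that is,
\[
  \frac{2\mathcal E_{n+1}}{(n+1)n}\ \ge\ \frac{2\mathcal E_n}{n(n-1)}.
\]
Exponentiating with a minus sign shows that $\delta_{n+1}\le\delta_n$. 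This step is purely combinatorial and does not depend on the sign of $G$.

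\textbf{Upper bound $\limsup\delta_n\le\operatorname{cap}(E)$.} Integrate the inequality $\sum_{i<j}G(q_i,q_j)\ge\mathcal E_n$ against $d\mu_E(q_1)\cdots d\mu_E(q_n)$. This is legitimate because $I(\mu_E)=V_E$ is finite and $G$ is bounded below, so Tonelli applies. Each pair contributes $I(\mu_E)=V_E$, so $\binom n2 V_E\ge\mathcal E_n$. Therefore $\delta_n\le e^{-V_E}=\operatorname{cap}(E)$ for every $n$.

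\textbf{Lower bound $\liminf\delta_n\ge\operatorname{cap}(E)$.} Let $\nu_n=\frac1n\sum_j\delta_{q_j}$ be the normalized counting measure of an $n$-point Fekete set, and pass to a weak-$*$ convergent subsequence $\nu_n\to\nu\in\mathcal M(E)$. For $M>0$ put $G_M=\min(G,M)$, which is continuous on $E\times E$ and bounded. Then
\[
  \iint G_M\,d\nu_n\,d\nu_n\ \le\ \frac{2\mathcal E_n}{n^2}+\frac{M}{n}.
\]
Letting $n\to\infty$ along the subsequence, and using that $\frac{2\mathcal E_n}{n^2}$ and $\frac{2\mathcal E_n}{n(n-1)}$ have the same limit, we get
\[
  \iint G_M\,d\nu\,d\nu\ \le\ \lim_n\frac{2\mathcal E_n}{n(n-1)} =: -\log\delta .
\]
This limit exists by monotonicity and is finite by the upper bound. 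Now let $M\to\infty$; monotone convergence applies since $G$ is bounded below on $E\times E$. This yields $V_E\le I(\nu)\le -\log\delta$, so $\delta\ge\operatorname{cap}(E)$. Combined with the upper bound, this proves $\lim_n\delta_n(E)=\operatorname{cap}(E)$.

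The main point needing care is the use of the properties of $G$ on $E\times E$: lower semicontinuity, a finite lower bound (which follows from continuity of $G$ off the diagonal on the compact set $E\times E$, since $P_\infty\notin E$), and approximation by the continuous truncations $G_M$. These are exactly the facts the paper says transfer verbatim from classical potential theory via \cite{BGK23}. Given them, the argument is identical to the planar one.
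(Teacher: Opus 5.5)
Your route is the classical Fekete argument that the paper invokes. The paper omits the proof, cites Saff--Totik, and lists exactly the kernel properties you verify. Your monotonicity step and the inequalities you derive between energies are all correct.

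However, both times you pass from energies to $\delta_n$ you reverse the inequality.

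\emph{Equilibrium-measure step.} You correctly obtain $\mathcal E_n\le\binom n2 V_E$, i.e.\ $\frac{2\mathcal E_n}{n(n-1)}\le V_E$. Since $\delta_n=\exp\bigl(-\frac{2\mathcal E_n}{n(n-1)}\bigr)$, this gives $\delta_n\ge e^{-V_E}=\operatorname{cap}(E)$ for every $n$, not $\delta_n\le\operatorname{cap}(E)$. The claim as you wrote it is false. Combined with the monotone decrease of $\delta_n$ to $\operatorname{cap}(E)$, it would force $\delta_n\equiv\operatorname{cap}(E)$. But already for $E=[-1,1]$ in the plane, $\delta_2=2>\tfrac12=\operatorname{cap}(E)$.

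\emph{Discrete-measure step.} Likewise, $V_E\le I(\nu)\le-\log\delta$ yields $\delta\le e^{-V_E}=\operatorname{cap}(E)$, not $\delta\ge\operatorname{cap}(E)$.

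So the headings of your two steps are interchanged. Averaging over $\mu_E^{\otimes n}$ proves $\delta_n\ge\operatorname{cap}(E)$ for each $n$, as in the plane. The weak-$*$ limit of the Fekete counting measures proves $\lim_n\delta_n\le\operatorname{cap}(E)$.

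This also affects your finiteness remark. Before letting $M\to\infty$ you need $\lim_n 2\mathcal E_n/(n(n-1))$ to be finite. That holds because the sequence is non-decreasing and bounded \emph{above} by $V_E$. This is the correctly oriented form of your first bound, not the statement ``$\delta_n\le\operatorname{cap}(E)$''.

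Because the two slips cancel each other, the final identity $\lim_n\delta_n=\operatorname{cap}(E)$ still holds. With the directions corrected, your argument is complete and is the proof the paper has in mind.
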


The standard Fekete argument applies verbatim. Indeed, its proof uses
only that the kernel is symmetric, lower semicontinuous, bounded from
below on $E\times E$, and equal to $+\infty$ on the diagonal; all
these properties hold for the bipolar Green kernel restricted to
$E\times E$. We therefore omit the proof and refer to
\cite{SaffTotik}.

\begin{proof}[Proof of Theorem~\ref{thm:main-root}]
Theorem~\ref{thm:surface-BW} gives
\[
        \liminf_{N\to\infty}t_N(E,P_\infty)^{1/N}\ge\capc(E).
\]
We prove the reverse inequality.  Fix \(m\ge2g\).  For each \(n\ge2\), choose
an \(n\)-point Fekete divisor
\[
        D_n=q_1^{(n)}+\cdots+q_n^{(n)}
\]
for the kernel \(G\) on \(E\), and form the Green product \(B_{D_n}\).  Let
\(\chi_{D_n}\) be its character.  By Lemma~\ref{lem:uniform-correction}, choose
a section
\[
        s_n\in H^0\bigl(X,\cL_{\chi_{D_n}^{-1}}
              \otimes\cO(mP_\infty)\bigr)
\]
such that
\[
        s_n(p)=\zeta(p)^{-m}+O(\zeta(p)^{-m+1}),
        \qquad
        \|s_n\|_E\le C_E.
\]
Then
\[
        F_n:=s_n B_{D_n}
\]
has trivial character, and hence is a single-valued meromorphic function on
\(X\).  It has no pole away from \(P_\infty\), and
\[
        F_n(p)=\zeta(p)^{-(n+m)}
               +O(\zeta(p)^{-(n+m)+1}),
        \qquad p\to P_\infty.
\]
Thus \(F_n\) is admissible of order \(n+m\).

First let \(p\in E\setminus\operatorname{supp}D_n\).  Compare the
\((n+1)\)-point configuration
\(\{p,q_1^{(n)},\ldots,q_n^{(n)}\}\) with an \((n+1)\)-point Fekete
configuration.  Since \(D_n\) itself is Fekete, we obtain
\[
\begin{split}
        |B_{D_n}(p)|\,\delta_n(E)^{n(n-1)/2}
        &=
        \exp\left(
          -\sum_{j=1}^n G(p,q_j^{(n)})
          -\sum_{1\le i<j\le n}G(q_i^{(n)},q_j^{(n)})
        \right)\\
        &\le \delta_{n+1}(E)^{n(n+1)/2}.
\end{split}
\]
The same inequality is immediate if \(p\) is one of the points of \(D_n\),
because then \(B_{D_n}(p)=0\).  Therefore
\[
        \|F_n\|_E
        \le
        C_E
        \frac{\delta_{n+1}(E)^{n(n+1)/2}}
             {\delta_n(E)^{n(n-1)/2}} .
\]
This is the point at which monotonicity of the Fekete diameters is needed.  By
Theorem~\ref{thm:fekete-diameter}, \(\delta_{n+1}(E)\le\delta_n(E)\), and so
\[
\begin{split}
        \frac{\delta_{n+1}(E)^{n(n+1)/2}}
             {\delta_n(E)^{n(n-1)/2}}
        &=
        \delta_{n+1}(E)^n
        \left(\frac{\delta_{n+1}(E)}{\delta_n(E)}\right)^{n(n-1)/2}\\
        &\le \delta_{n+1}(E)^n.
\end{split}
\]
Since \(F_n\) is admissible of order \(n+m\),
\[
        t_{n+m}(E,P_\infty)
        \le \|F_n\|_E
        \le C_E\delta_{n+1}(E)^n.
\]
Consequently,
\[
        t_{n+m}(E,P_\infty)^{1/(n+m)}
        \le C_E^{1/(n+m)}
             \delta_{n+1}(E)^{n/(n+m)}.
\]
Now \(C_E^{1/(n+m)}\to1\), \(n/(n+m)\to1\), and
\(\delta_{n+1}(E)\to\capc(E)\).  Hence
\[
        \limsup_{n\to\infty}
        t_{n+m}(E,P_\infty)^{1/(n+m)}
        \le \capc(E).
\]
As \(N=n+m\) runs through every integer \(N\ge m+2\), this is the full upper
bound
\[
        \limsup_{N\to\infty}t_N(E,P_\infty)^{1/N}\le\capc(E).
\]
Together with the lower bound, this proves the theorem.
\end{proof}

\begin{remark}
When \(X=\widehat\C\), \(P_\infty=\infty\), and \(\zeta=1/z\), \(B_q(z)=z-q\), and one may take \(m=0\) and \(s\equiv1\).
The construction is therefore exactly the classical Fekete-polynomial proof.
\end{remark}
\section{Preliminary constructions}
\subsection{Characters and line bundles on \texorpdfstring{$Y$}{Y}}
For all the following sections we will assume Assumption \ref{ass1}. That is, $Y$ is a connected compact bordered Riemann surface of genus $g$ with $p$ boundary curves which are analytic.

A unitary character
of \(Y\) is a homomorphism
\[
        \chi:\pi_1(Y,P_\infty)\longrightarrow\mathbb T,
        \qquad
        \mathbb T:=\{\zeta\in\mathbb C:|\zeta|=1\}.
\]
We denote the group of unitary characters by
\[
        \mathfrak X(Y)
        :=
        \operatorname{Hom}\bigl(\pi_1(Y,P_\infty),\mathbb T\bigr).
\]
Since every character factors through the abelianisation of the fundamental
group and
\[
        H_1(Y;\mathbb Z)\simeq\mathbb Z^{\hat{g}},
        \qquad \hat{g}:=2g+p-1,
\]
the character group is naturally identified with the torus
\[
        \mathfrak X(Y)
        \simeq
        \operatorname{Hom}\bigl(H_1(Y;\mathbb Z),\mathbb T\bigr)
        \simeq\mathbb T^{\hat{g}}.
\]
Let \(\pi:\widetilde Y\to Y^\circ\) be the universal covering map, and
identify its group of deck transformation with \(\pi_1(Y,P_\infty)\).  With a character
\(\chi\in\mathfrak X(Y)\) we associate the flat holomorphic line bundle
\[
        L_\chi
        :=
        (\widetilde Y\times\mathbb C)/{\sim},
        \qquad
        (\widetilde q,v)
        \sim
        \bigl(\gamma\cdot\widetilde q,\chi(\gamma)v\bigr).
\]
With this convention, a holomorphic section of \(L_\chi\) is represented
on the universal cover by a holomorphic function
\(\psi:\widetilde Y\to\mathbb C\) satisfying
\[
        \psi(\gamma\cdot\widetilde q)
        =
        \chi(\gamma)\psi(\widetilde q).
\]
Thus multiplicative holomorphic functions of character \(\chi\) and
holomorphic sections of \(L_\chi\) are two descriptions of the same
object.

The existence of a nowhere-vanishing section is a standard consequence
of the theory of line bundles on open Riemann surfaces.  Indeed, choose a
connected open neighbourhood \(Y'\) of \(Y\) such that
\(Y\hookrightarrow Y'\) is a deformation retract.  The character \(\chi\),
and hence the bundle \(L_\chi\), extends naturally to \(Y'\). \cite[Lemma~30.2]{Forster}
 shows that a nontrivial meromorphic section of a holomorphic
bundle on a non-compact Riemann surface may be multiplied by a meromorphic
function so that all of its zeros and poles are removed.  The resulting
section is holomorphic and nowhere vanishing, this yields the
holomorphic triviality of every line bundle on a non-compact Riemann
surface \cite[Lemma~30.2 and Theorem~30.3]{Forster}.  Applied to
\(L_\chi\), it gives a nowhere-vanishing holomorphic section, or
equivalently a function
\[
        \psi_\chi\in\mathcal O(\widetilde Y)^\ast,
        \qquad
        \psi_\chi(\gamma\cdot\widetilde q)
        =
        \chi(\gamma)\psi_\chi(\widetilde q).
\]
After fixing a lift \(\widetilde P_\infty\) of \(P_\infty\), the section
may be normalized by requiring
\[
        \psi_\chi(\widetilde P_\infty)=1.
\]
Equivalently, the existence of \(\psi_\chi\) is the rank-one case of
Forster's theorem on prescribed multiplicative factors of automorphy
\cite[Theorem~31.2]{Forster}.

For later purposes we need slightly more than pointwise existence: the
section must vary continuously with the character. We do this via
the Behnke--Stein construction by prescribed periods.  Choose loops
\(c_1,\ldots,c_{\hat{g}}\) whose homology classes form a basis of
\(H_1(Y;\mathbb Z)\).  By the Behnke--Stein period theorem
\cite[Theorem~28.6]{Forster}, there are holomorphic differentials
\(\eta_1,\ldots,\eta_{\hat{g}}\), defined on the fixed neighbourhood \(Y'\), such
that
\[
        \int_{c_k}\eta_j=\delta_{jk}.
\]
Fix \(\chi_0\in\mathfrak X(Y)\).  In a sufficiently small neighbourhood
\(U\) of \(\chi_0\), one can choose the arguments of the character
continuously; that is, there are continuous real-valued functions
\(\theta_j\) on \(U\) satisfying
\[
        \chi(c_j)=e^{i\theta_j(\chi)},
        \qquad \chi\in U.
\]
For \(\chi\in U\), set
\[
        \omega_\chi
        :=
        i\sum_{j=1}^N\theta_j(\chi)\eta_j
\]
and define
\begin{equation}\label{eq:nvlb}
    \psi_\chi(\widetilde q)
        :=
        \exp\left(
            \int_{\widetilde P_\infty}^{\widetilde q}
            \pi^\ast\omega_\chi
        \right).
\end{equation}
The periods of \(\omega_\chi\) give
\[
        \psi_\chi(\gamma\cdot\widetilde q)
        =
        \chi(\gamma)\psi_\chi(\widetilde q),
\]
so that \(\psi_\chi\) represents a nowhere-vanishing holomorphic section
of \(L_\chi\), normalized at \(P_\infty\).

This formula also makes the dependence on \(\chi\)  vary continuously.  It follows that
\(\chi\mapsto\psi_\chi\) is continuous, in fact real analytic.

Unitarity gives a corresponding statement downstairs.  Indeed,
\[
        \lvert\psi_\chi(\gamma\cdot\widetilde q)\rvert
        =
        \lvert\psi_\chi(\widetilde q)\rvert,
\]
so \(\lvert\psi_\chi\rvert\) descends to a positive function on \(Y\).

The construction is necessarily local in \(\chi\), since the arguments
\(\theta_j(\chi)\) cannot in general be chosen continuously on the whole
character torus.  On the overlap of two such character neighbourhoods,
the arguments differ by integer multiples of \(2\pi\), and the
corresponding sections differ by multiplication by a nowhere-vanishing
single-valued holomorphic function.  We record $\psi_\chi$ is the analog of $V_\Gamma$ of \cite{Widom69}.

\subsection{Boundary forms and harmonic measure}
\label{sec:boundary-harmonic-forms}

Following the potential-theoretic setup of Widom
\cite[Sec.~4]{Widom69}, we record its intrinsic form on a bordered
surface.  For corresponding Hodge star on one forms  we use the convention
\[
        dv=*du
\]
whenever \(u+iv\) is holomorphic.  On one-forms, \(d^\ast=-*d*\) denotes the
codifferential.  We write \(n_{\rm out}\) for the outward unit normal,
\(n_{\rm in}=-n_{\rm out}\) for the inward unit normal, and \(ds\) for
arclength. For the corresponding 
description in terms of the Schottky double, see
\cite[Secs.~2 and 3.1]{GustafssonSebbar}.

Let \(g_E(p,q)\) be the Dirichlet Green function of \(Y\) (see \eqref{eq:gp}), normalised by
\[
        g_E(p,q)=-\log|z(q)|+O(1),\qquad q\to p,
\]
in a local coordinate \(z\) centred at \(p\), and by
\(g_E(p,q)=0\) for \(q\in\Gamma\).

\begin{lemma}
\label{lem:poisson-boundary-form}
For each \(p\in Y^\circ\), the one-form
\[
        \Pi_p(q):=\left.-*_q d_q g_E(p,q)\right|_{\Gamma}
\]
is smooth and positive on \(\Gamma\).  In any smooth conformal metric,
\[
        \Pi_p(q)
        =
        -\frac{\partial g_E(p,q)}{\partial n_{\rm out}(q)}\,ds_q
        =
        \frac{\partial g_E(p,q)}{\partial n_{\rm in}(q)}\,ds_q .
\]

\end{lemma}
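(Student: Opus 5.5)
We want to show that \(\Pi_p\) is smooth and positive on \(\Gamma\), and to identify it with the normal derivative. The plan has three steps: boundary regularity of \(g_E(p,\cdot)\), the Hodge-star identity, and positivity via Hopf's lemma.

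\textbf{Regularity.} Fix \(p\in Y^\circ\). The function \(u:=g_E(p,\cdot)\) is harmonic on \(Y^\circ\setminus\{p\}\), vanishes on \(\Gamma\), and is positive in \(Y^\circ\), since \(g_E(\cdot,p)\ge0\) and the minimum principle gives strict positivity away from \(\Gamma\). Each \(\Gamma_j\) is real-analytic. So near any \(q_0\in\Gamma\) we choose a local coordinate \(w=x+iy\) that straightens \(\Gamma\) to the real axis, with \(Y\) corresponding to \(\{y\ge0\}\). Schwarz reflection \(u(\bar w)=-u(w)\) continues \(u\) harmonically across \(\Gamma\). Hence \(u\) is real-analytic up to \(\Gamma\), and so is \(*du\).

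\textbf{Hodge-star identity.} The identity follows pointwise from the convention \(dv=*du\) for \(u+iv\) holomorphic. In the straightened coordinate, \(*du=-u_y\,dx+u_x\,dy\). Restricting to \(\Gamma=\{y=0\}\), where \(u_x=0\) because \(u\equiv0\) there, gives
\[
*du\big|_\Gamma=-u_y\,dx .
\]
Here \(\partial_y\) is the inward normal and \(dx\) is arclength, both up to the conformal factor, which cancels between the normal derivative and \(ds\). Thus
\[
-*du\big|_\Gamma=\frac{\partial u}{\partial n_{\rm in}}\,ds=-\frac{\partial u}{\partial n_{\rm out}}\,ds .
\]
The orientation of \(\Gamma\) is the boundary orientation of \(Y\), that is, with \(Y\) on the left, which matches \(dx\) when \(Y\) lies in \(\{y>0\}\). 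Because both the normal derivative and \(ds\) transform by the same conformal factor, the expression is independent of the smooth conformal metric.

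\textbf{Positivity.} This is the one step needing more than bookkeeping. Since \(u>0\) in \(Y^\circ\), \(u=0\) on \(\Gamma\), and \(\Gamma\) is analytic (so the interior ball condition holds), the Hopf boundary-point lemma gives \(\partial u/\partial n_{\rm in}>0\) at every point of \(\Gamma\). Alternatively, one can read positivity off the reflected harmonic function: \(q_0\) is not a critical point, because a harmonic function changing sign across a line through a critical point would have at least four nodal arcs there, whereas \(u\) vanishes only along \(\Gamma\) near \(q_0\). Together with the sign convention fixed above, this shows \(\Pi_p\) is a smooth, indeed real-analytic, positive one-form on \(\Gamma\).
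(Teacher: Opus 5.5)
Your proof is correct and follows essentially the same route as the paper. Both arguments use boundary regularity of \(g_E(p,\cdot)\), then the vanishing of its tangential derivative on \(\Gamma\), so that \(-*d g_E\) restricts to \(\partial_{n_{\rm in}} g_E\,ds\) in the Stokes orientation, then conformal invariance of this expression, and finally Hopf's lemma for positivity. The differences are minor: you get regularity by Schwarz reflection, using the analyticity of \(\Gamma\), where the paper cites general boundary regularity for the Dirichlet problem, and your nodal-arc argument is a valid alternative to Hopf's lemma in this analytic setting.
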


\begin{proof}
For fixed \(p\in Y^\circ\), boundary regularity for the Dirichlet problem
shows that \(g_E(p,\cdot)\) is smooth in a collar of \(\Gamma\).  Since it
vanishes on \(\Gamma\), its tangential derivative vanishes there, and hence
\[
        d_qg_E(p,q)
        =
        \frac{\partial g_E(p,q)}{\partial n_{\rm out}(q)}
        n_{\rm out}^{\flat}
        =
        \frac{\partial g_E(p,q)}{\partial n_{\rm in}(q)}
        n_{\rm in}^{\flat} .
\]
With the Stokes orientation, the restriction of
\(*n_{\rm out}^{\flat}\) to \(\Gamma\) is the positive arclength form.
The displayed formula for \(\Pi_p\) follows.  The maximum principle gives
\(g_E(p,\cdot)>0\) in \(Y^\circ\setminus\{p\}\), and the Hopf boundary lemma gives
\(\partial_{n_{\rm out}}g_E(p,\cdot)<0\); thus \(\Pi_p\) is positive.

Finally, both the Dirichlet Green function with the stated logarithmic
normalisation and the Hodge star on one-forms are conformally invariant.
Equivalently, under a conformal change of metric, the normal derivative and
the arclength element scale by reciprocal factors.  Thus \(\Pi_p\) is
intrinsic.
\end{proof}

\begin{lemma}
\label{lem:harmonic-measure-boundary-form}
For \(p\in Y^\circ\), harmonic measure at \(p\) is given by
\[
        d\omega^p(q)=\frac{1}{2\pi}\Pi_p(q).
\]
Equivalently, if \(u\in C(Y)\) is harmonic in \(Y^\circ\) and has boundary
values \(u|_\Gamma=\varphi\in C(\Gamma)\), then
\[
        u(p)=\frac{1}{2\pi}
        \int_{\Gamma}\varphi(q)\,\Pi_p(q).
\]
For \(j=1,\ldots,p\), set
\[
        \omega_j(p):=\omega^p(\Gamma_j)
        =
        \frac{1}{2\pi}\int_{\Gamma_j}\Pi_p(q).
\]
Then \(\omega_j\) is harmonic in \(Y^\circ\), extends smoothly to \(Y\), and
satisfies
\[
        \omega_j|_{\Gamma_k}=\delta_{jk},
        \qquad
        \sum_{j=1}^p\omega_j=1 .
\]
\end{lemma}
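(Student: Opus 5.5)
The natural route is Green's second identity applied on $Y$ with a small disc around $p$ removed. Fix $p\in Y^\circ$ and let $u\in C(Y)$ be harmonic in $Y^\circ$ with boundary values $\varphi$.

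\textbf{Step 1: reduce to smooth $u$.} Approximate $\varphi$ uniformly by real-analytic (or smooth) functions on $\Gamma$. The Dirichlet problem on the analytic bordered surface $Y$ is regular, so the solutions with these data are smooth up to $\Gamma$. By the maximum principle they converge uniformly to $u$. Since $\Pi_p$ is a smooth positive form on $\Gamma$ by Lemma~\ref{lem:poisson-boundary-form}, the right-hand side is also continuous under uniform convergence of $\varphi$. So it suffices to prove the formula for $u$ smooth up to $\Gamma$.

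\textbf{Step 2: Green's identity with a puncture.} Let $D_\varepsilon=\{|z|<\varepsilon\}$ in a coordinate $z$ centred at $p$, and set $h=g_E(p,\cdot)$. On $Y_\varepsilon=Y\setminus D_\varepsilon$ both $u$ and $h$ are harmonic, so
\[
0=\int_{Y_\varepsilon}(u\,\Delta h-h\,\Delta u)=\int_{\partial Y_\varepsilon}\bigl(u*dh-h*du\bigr).
\]
The two boundary pieces are handled as follows.
\begin{itemize}
\item On $\Gamma$ we have $h=0$. The definition of $\Pi_p$ gives $*dh|_\Gamma=-\Pi_p$, so the $\Gamma$-contribution is $-\int_\Gamma\varphi\,\Pi_p$.
\item On $|z|=\varepsilon$, oriented as the boundary of $Y_\varepsilon$ (clockwise), write $h=-\log|z|+O(1)$ with smooth remainder. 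Then $*dh$ restricted to the circle has integral $2\pi+o(1)$ in this orientation, so $\int u\,*dh\to 2\pi u(p)$. Also $\int h\,*du=O(\varepsilon\log\varepsilon)\to0$.
\end{itemize}
Letting $\varepsilon\to0$ gives $u(p)=\frac1{2\pi}\int_\Gamma\varphi\,\Pi_p$.

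The only delicate point is the sign and orientation bookkeeping. This has to agree with the convention $dv=*du$ and with the Stokes orientation used in Lemma~\ref{lem:poisson-boundary-form}. I will check it with the model case $u\equiv1$, which must give $\int_\Gamma\Pi_p=2\pi$. This is consistent with the flux of $-\log|z|$ around $p$.

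\textbf{Step 3: identify the measure and the $\omega_j$.} Since $\omega^p$ is the unique probability measure representing the Dirichlet solution at $p$, we get $d\omega^p=\frac1{2\pi}\Pi_p$. For $\omega_j$:
\begin{itemize}
\item $\omega_j$ is the solution of the Dirichlet problem with data $\mathbf 1_{\Gamma_j}$. These data are continuous because the $\Gamma_k$ are disjoint closed curves.
\item Hence $\omega_j$ is harmonic, continuous on $Y$, and satisfies $\omega_j|_{\Gamma_k}=\delta_{jk}$.
\item It is smooth up to the boundary because the data are locally constant and $\Gamma$ is analytic. Indeed, $\omega_j$ extends harmonically across $\Gamma$ by Schwarz reflection in analytic boundary coordinates.
\item $\sum_j\omega_j$ solves the Dirichlet problem with data $1$, so it equals $1$ by uniqueness.
\end{itemize}
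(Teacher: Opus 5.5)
Your proposal is correct and follows essentially the same route as the paper: reduce to smooth boundary data by uniform approximation and the maximum principle, apply Green's identity on $Y$ minus a small coordinate disc about $p$ to pick up $2\pi u(p)$ from the logarithmic singularity, and read off $\omega_j$ and $\sum_j\omega_j=1$ from the Dirichlet problem with locally constant data. The differences are cosmetic. You write the identity with $*dh$ instead of normal derivatives, which makes the sign check against the definition of $\Pi_p$ immediate. You also justify smoothness of $\omega_j$ up to $\Gamma$ by Schwarz reflection, where the paper appeals to boundary regularity for smooth data.
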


\begin{proof}
We first take \(\varphi\in C^\infty(\Gamma)\), and let \(u\) be its harmonic
extension.  By boundary regularity, \(u\) is smooth up to \(\Gamma\).  Remove
a coordinate disc \(D_\varepsilon(p)\) and put
\(Y_\varepsilon=Y\setminus D_\varepsilon(p)\).  Green's identity, with
\(\nu\) denoting the outward normal to \(Y_\varepsilon\), gives
\[
        0
        =
        \int_{\partial Y_\varepsilon}
        \left(
                u\frac{\partial g_E(p,\cdot)}{\partial\nu}
                -g_E(p,\cdot)\frac{\partial u}{\partial\nu}
        \right)ds .
\]
On \(\Gamma\), the second term vanishes.  On the inner boundary, the
expansion \(g_E(p,q)=-\log|z(q)|+O(1)\) yields
\[
        \int_{\partial D_\varepsilon(p)}
        \left(
                u\frac{\partial g_E(p,\cdot)}{\partial\nu}
                -g_E(p,\cdot)\frac{\partial u}{\partial\nu}
        \right)ds
        =2\pi u(p)+o(1).
\]
Consequently,
\[
        0
        =
        \int_\Gamma
        u(q)\frac{\partial g_E(p,q)}{\partial n_{\rm out}(q)}\,ds_q
        +2\pi u(p)+o(1).
\]
Letting \(\varepsilon\to0\) and applying
Lemma~\ref{lem:poisson-boundary-form} gives the asserted representation.

For general \(\varphi\in C(\Gamma)\), choose smooth functions converging
uniformly to \(\varphi\).  The corresponding harmonic extensions converge
uniformly on \(Y\) by the maximum principle, so the formula passes to the
limit.  Taking boundary data equal to \(1\) on \(\Gamma_j\) and to \(0\) on
the remaining components gives \(\omega_j\); these boundary data are smooth
because the components of \(\Gamma\) are disjoint.  Taking the constant
boundary value \(1\) gives \(\sum_j\omega_j=1\).
\end{proof}

\begin{lemma}
\label{lem:boundary-harmonic-forms}
For \(j=1,\ldots,p\), define
\[
        \sigma_j:=*d\omega_j .
\]
Each \(\sigma_j\) is a smooth, closed, and co-closed real one-form on \(Y\),
and
\[
        \iota_{n_{\rm in}}\sigma_j=0
        \qquad\text{on }\Gamma .
\]
Define the harmonic-measure period matrix
\[
        \mathcal P_{jk}:=\int_{\Gamma_j}\sigma_k,
        \qquad 1\le j,k\le p .
\]
Then \(\mathcal P=(\mathcal P_{jk})\) is symmetric and its row and column
sums vanish.  More precisely, for every \(c=(c_1,\ldots,c_p)\in\mathbb R^p\),
\[
        \sum_{j,k=1}^p c_j\mathcal P_{jk}c_k
        =
        \int_Y d\omega_c\wedge *d\omega_c
        =
        \int_Y |d\omega_c|^2\,dA,
        \qquad
        \omega_c:=\sum_{j=1}^p c_j\omega_j .
\]
Consequently,
\[
        \ker\mathcal P=\mathbb R(1,\ldots,1),
\]
and \(\mathcal P\) is positive definite on
\[
        (1,\ldots,1)^\perp
        =
        \left\{c\in\mathbb R^p:\sum_{j=1}^p c_j=0\right\}.
\]
\end{lemma}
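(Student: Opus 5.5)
The plan is to read off each assertion from the harmonicity of $\omega_j$ and Green's identity on the compact bordered surface $Y$. First, $\omega_j$ is harmonic in $Y^\circ$ and, by Lemma~\ref{lem:harmonic-measure-boundary-form} together with analytic boundary regularity, smooth up to $\Gamma$. Hence $\sigma_j=*d\omega_j$ is smooth on $Y$. It is closed because $d*d\omega_j=\Delta\omega_j\,dA=0$, and it is co-closed because $d*\sigma_j=d**d\omega_j=-dd\omega_j=0$.

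For the boundary condition, $\omega_j$ is constant on each $\Gamma_k$, so $d\omega_j|_\Gamma$ is a multiple of $n_{\rm in}^\flat$. Applying the Hodge star, as in the proof of Lemma~\ref{lem:poisson-boundary-form}, gives a multiple of the arclength form $ds$. That form annihilates the normal vector, so $\iota_{n_{\rm in}}\sigma_j=0$.

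The quadratic-form identity comes from Stokes. Write $\omega_c=\sum_j c_j\omega_j$ and $\sigma_c=*d\omega_c=\sum_k c_k\sigma_k$, which is closed. Then
\[
\int_Y d\omega_c\wedge *d\omega_c=\int_Y d(\omega_c\,\sigma_c)=\int_{\Gamma}\omega_c\,\sigma_c=\sum_j c_j\int_{\Gamma_j}\sigma_c=\sum_{j,k}c_j\mathcal P_{jk}c_k ,
\]
using $\omega_c|_{\Gamma_j}=c_j$ and the Stokes orientation on $\Gamma$. The right-hand side equals $\int_Y|d\omega_c|^2\,dA$ by the definition of the Hodge star.

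Symmetry of $\mathcal P$ follows from the same computation with two different vectors. Applying Stokes to $\omega_j\,\sigma_k$ gives
\[
\mathcal P_{jk}=\int_{\Gamma_j}\sigma_k=\int_\Gamma\omega_j\sigma_k=\int_Y d\omega_j\wedge*d\omega_k ,
\]
and this expression is symmetric in $j,k$ because $\alpha\wedge*\beta=\beta\wedge*\alpha$.

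For the row and column sums, $\sum_k\omega_k=1$, so $\sum_k\sigma_k=*d1=0$ and every row sum vanishes. Symmetry then gives the column sums.

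For the kernel, the identity shows that $c^T\mathcal P c\ge0$, with equality only if $d\omega_c=0$. Since $Y$ is connected, that forces $\omega_c$ to be constant, hence $c_1=\dots=c_p$. For a positive semidefinite symmetric matrix, $c^T\mathcal P c=0$ is equivalent to $\mathcal P c=0$. Therefore $\ker\mathcal P=\mathbb R(1,\dots,1)$, and $\mathcal P$ is positive definite on the orthogonal complement.

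The main obstacle I expect is bookkeeping of the orientation and sign conventions: the Stokes orientation of $\Gamma$, the convention $dv=*du$, and the identification of $*n_{\rm out}^\flat$ with $+ds$. I would fix these exactly as in the proof of Lemma~\ref{lem:poisson-boundary-form}. The check is that the diagonal entries come out as $\mathcal P_{jj}=\int_Y|d\omega_j|^2\,dA\ge0$; this matches $\int_{\Gamma_j}\sigma_j$ being the inward normal flux, which is positive because $\omega_j=1$ on $\Gamma_j$ is the maximum value.
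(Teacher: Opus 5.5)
Your proposal is correct and follows the paper's proof essentially step for step: harmonicity gives closedness and co-closedness, constancy of $\omega_j$ on each $\Gamma_k$ gives $\iota_{n_{\rm in}}\sigma_j=0$, and Stokes applied to $\omega_j\sigma_k$ gives $\mathcal P_{jk}=\int_Y d\omega_j\wedge *d\omega_k$, from which symmetry, the energy identity and the kernel statement follow. The differences are minor: you get the column sums from symmetry rather than from $\int_\Gamma\sigma_k=\int_Y d\sigma_k=0$; your positive-semidefinite step is correct but not needed, since any $c\in\ker\mathcal P$ has $c^{T}\mathcal P c=0$ and is therefore constant, while constant vectors lie in the kernel because the row sums vanish; and in your sign check, $\sigma_j|_{\Gamma}=\partial_{n_{\rm out}}\omega_j\,ds$, so $\mathcal P_{jj}\ge 0$ reflects the outward (not inward) normal derivative at the boundary maximum.
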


\begin{proof}
Since \(\omega_j\) is harmonic,
\[
        d\sigma_j=d*d\omega_j=0.
\]
Because \(**=-1\) on one-forms,
\[
        d^\ast\sigma_j
        =
        -*d*\sigma_j
        =
        *d^2\omega_j
        =0.
\]
Thus \(\sigma_j\) is closed and co-closed.  Since \(\omega_j\) is constant
on each boundary component, the tangential part of \(d\omega_j\) vanishes on
\(\Gamma\).  Hence \(*d\omega_j\) is tangential there, which is equivalent
to \(\iota_{n_{\rm in}}\sigma_j=0\).

The identity \(\sum_k\omega_k=1\) gives \(\sum_k\sigma_k=0\), and therefore
\[
        \sum_{k=1}^p\mathcal P_{jk}=0
        \qquad (1\le j\le p).
\]
On the other hand, Stokes' theorem and \(d\sigma_k=0\) give
\[
        \sum_{j=1}^p\mathcal P_{jk}
        =
        \int_\Gamma\sigma_k
        =
        \int_Y d\sigma_k
        =0
        \qquad (1\le k\le p).
\]

Because \(\omega_j\) equals \(1\) on \(\Gamma_j\) and \(0\) on every other
boundary component, another application of Stokes' theorem yields
\[
        \mathcal P_{jk}
        =
        \int_\Gamma\omega_j\sigma_k
        =
        \int_Y d\omega_j\wedge\sigma_k
        =
        \int_Y d\omega_j\wedge *d\omega_k .
\]
This expression is symmetric in \(j\) and \(k\), and summing it against
\(c_jc_k\) gives the energy identity in the statement.

The energy vanishes precisely when \(d\omega_c=0\).  Since \(Y\) is
connected, this is equivalent to \(\omega_c\) being constant.  Its boundary
value on \(\Gamma_j\) is \(c_j\), so this occurs precisely when all the
\(c_j\) are equal.  This proves the assertions about the kernel and positive
definiteness.  Finally,
\[
        \sum_{j=1}^p c_j\sigma_j=*d\omega_c,
\]
so the same argument shows that every linear relation among the \(\sigma_j\)
is a multiple of \(\sum_j\sigma_j=0\).
\end{proof}

\begin{lemma}
\label{lem:absolute-harmonic-representatives}
Let
\[
        \mathrm{Harm}_{\hat{g}}^1(Y)
        :=
        \left\{
        \rho\in\Omega^1(Y;\mathbb R):
        d\rho=0,\ d^\ast\rho=0,
        \iota_{n_{\rm in}}\rho=0\text{ on }\Gamma
        \right\}.
\]
Then the natural map
\[
        \mathrm{Harm}_{\hat{g}}^1(Y)\longrightarrow H^1_{\mathrm{dR}}(Y;\mathbb R),
        \qquad
        \rho\longmapsto[\rho],
\]
is an isomorphism.  
\end{lemma}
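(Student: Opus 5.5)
The plan is the Hodge--Morrey--Friedrichs argument, done concretely with the Schottky double. Absolute boundary conditions correspond to reflection-invariant forms on $\widehat Y$.

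\emph{Step 1: the Schottky double.} Let $\widehat Y=Y\cup_\Gamma\sigma(Y)$ be the Schottky double. It is a compact Riemann surface of genus $\hat g=2g+p-1$, with antiholomorphic involution $\sigma$ fixing $\Gamma$ pointwise; the double is real-analytic because $\Gamma$ is. A real one-form $\rho$ on $Y$ with $d\rho=0$, $d^\ast\rho=0$ and $\iota_{n_{\rm in}}\rho=0$ on $\Gamma$ will be continued to $\widehat Y$ by $\hat\rho:=\rho$ on $Y$ and $\hat\rho:=\sigma^\ast\rho$ on $\sigma(Y)$.

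\emph{Step 2: the reflected form is harmonic.} Near $\Gamma$, take a boundary coordinate $z=x+iy$ with $\Gamma=\{y=0\}$ and $\sigma(z)=\bar z$. Write $\rho=a\,dx+b\,dy$. Closedness and co-closedness say exactly that $a-ib$ is holomorphic. The condition $\iota_{n_{\rm in}}\rho=0$ says $b=0$ on $y=0$, so $a-ib$ is real on the real axis. By Schwarz reflection it extends holomorphically, and this extension is precisely $\sigma^\ast\rho$. Hence $\hat\rho$ is a smooth $\sigma$-invariant harmonic one-form on $\widehat Y$. Conversely, every $\sigma^\ast$-invariant harmonic form on $\widehat Y$ restricts to an element of $\mathrm{Harm}^1_{\hat g}(Y)$: invariance forces the normal component to vanish on $\mathrm{Fix}(\sigma)=\Gamma$. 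This gives
\[
\mathrm{Harm}^1_{\hat g}(Y)\cong\mathcal H^1(\widehat Y;\mathbb R)^{\sigma}.
\]

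\emph{Step 3: the topological side.} By Hodge theory on the compact surface $\widehat Y$, the map $\mathcal H^1(\widehat Y;\mathbb R)\to H^1_{\rm dR}(\widehat Y)$ is an isomorphism, and it is $\sigma^\ast$-equivariant. So it remains to show that restriction $r\colon H^1_{\rm dR}(\widehat Y)^{\sigma}\to H^1_{\rm dR}(Y)$ is an isomorphism. I will not use Mayer--Vietoris here; injectivity and surjectivity can be checked directly.

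\emph{Injectivity.} Suppose $\hat\rho$ is invariant and $\rho=du$ on $Y$. Then $\hat\rho=d(u\text{ on }Y,\ u\circ\sigma\text{ on }\sigma(Y))$, a continuous function that is smooth across $\Gamma$ since $\hat\rho$ is smooth. So $[\hat\rho]=0$.

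\emph{Surjectivity.} This is the expected main obstacle. Given a closed $\alpha$ on $Y$, I first modify it by an exact form so that it becomes a product $\pi^\ast\beta$ in a collar $\Gamma\times[0,\epsilon)$, where $\pi$ is the collar projection; this uses that the collar retracts onto $\Gamma$. Such a form glues with $\sigma^\ast\alpha$ to an invariant closed form on $\widehat Y$ whose restriction to $Y$ is cohomologous to $\alpha$. I then average, replacing it by $\frac12(\hat\alpha+\sigma^\ast\hat\alpha)$, and pass to its harmonic representative.

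\emph{Dimension check.} As a consistency check, $\dim H^1_{\rm dR}(Y)=2g+p-1=\hat g$, while $\sigma^\ast$ acts on the $2\hat g$-dimensional space $H^1(\widehat Y)$ as an orientation-reversing involution. Its $\pm1$ eigenspaces are Lagrangian for the intersection form, hence each has dimension exactly $\hat g$. So injectivity alone would already suffice, and this gives a fallback that avoids the collar gluing in the surjectivity step.
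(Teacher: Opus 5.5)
Your argument is correct, but it takes a genuinely different route from the paper's.

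The paper does not prove the lemma by hand. It cites the Hodge--Morrey--Friedrichs theorem with absolute boundary conditions \cite{Schwarz1995}. It then only checks that the stated conditions coincide with the standard ones: $\iota_{n_{\rm out}}d\rho=0$ is automatic because $d\rho=0$, and replacing $n_{\rm out}$ by $n_{\rm in}$ is harmless.

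You instead reduce everything to Hodge theory on the closed surface $\widehat Y$.
\begin{itemize}
\item Schwarz reflection of $a-ib$ identifies $\mathrm{Harm}^1_{\hat g}(Y)$ with the $\sigma$-invariant harmonic one-forms on the Schottky double.
\item What remains is the topological statement $H^1_{\rm dR}(\widehat Y)^\sigma\cong H^1_{\rm dR}(Y)$. Your gluing of primitives gives injectivity.
\item Since $\sigma$ reverses orientation, both $\sigma^\ast$-eigenspaces are isotropic for the intersection form. Each therefore has dimension $\hat g=\dim H^1_{\rm dR}(Y)$, and injectivity suffices.
\end{itemize}

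The citation buys generality: any dimension, merely smooth boundary, any metric. The price is a black box. Your route is elementary and uses only two inputs: the conformal invariance of $*$ on one-forms in dimension two, and the analytic boundary charts with $\sigma(z)=\bar z$ that Assumption~\ref{ass1} provides. It also fits the Schottky-double picture the paper uses elsewhere, and it shows transparently why the dimension equals the genus $\hat g$ of $\widehat Y$.

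Two small points on your surjectivity step, which the dimension count makes unnecessary anyway:
\begin{itemize}
\item The glued form is already $\sigma$-invariant by construction, so the averaging is redundant.
\item The collar must be chosen $\sigma$-equivariantly, so that $\pi\circ\sigma=\pi$ on a two-sided collar $\Gamma\times(-\epsilon,\epsilon)$. Otherwise $\pi^\ast\beta$ and its reflection need not match smoothly across $\Gamma$. You can get such a collar from the analytic boundary charts, or from the normal exponential map of a $\sigma$-invariant metric.
\end{itemize}
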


\begin{proof}
This is the Hodge--Morrey--Friedrichs theorem with absolute boundary
conditions; see \cite[Theorem~2.6.1 and Ch.~3]{Schwarz1995}.  In the
standard formulation, the absolute boundary conditions for a one-form
\(\rho\) are
\(\iota_{n_{\rm out}}\rho=0\) and
\(\iota_{n_{\rm out}}d\rho=0\).  The second condition is automatic here
because \(d\rho=0\), while the first is unchanged when \(n_{\rm out}\) is
replaced by \(n_{\rm in}=-n_{\rm out}\).  The stated space is therefore
exactly the space of absolute harmonic one-forms representing
\(H^1_{\mathrm{dR}}(Y;\mathbb R)\).
\end{proof}

\subsection{A discussion on the canonical homology basis}
We assumed $X$ has genus $g$. Choose a canonical homology basis
\[
        \mathfrak a_1,\ldots,\mathfrak a_g,
        \mathfrak b_1,\ldots,\mathfrak b_g
\]
of \(X\), represented by smooth curves in \(Y^\circ\), such that
\[
        \mathfrak a_j\cdot\mathfrak a_k
        =
        \mathfrak b_j\cdot\mathfrak b_k
        =0,
        \qquad
        \mathfrak a_j\cdot\mathfrak b_k
        =\delta_{jk}.
\]
We use fraktur letters here to distinguish these cycles from points of \(Y\). For canonical homology bases and their dual harmonic differentials, see \cite[Secs.~I.4 and III.2]{FarkasKra}.

 The inclusion \(Y\hookrightarrow X\) induces a surjection
\[
        H_1(Y,\mathbb Z)\longrightarrow H_1(X,\mathbb Z),
\]
whose kernel is generated by \([\Gamma_1],\ldots,[\Gamma_p]\), subject to the single relation
\[
        [\Gamma_1]+\cdots+[\Gamma_p]=0.
\]
Consequently,
\[
        \mathfrak a_1,\ldots,\mathfrak a_g,
        \mathfrak b_1,\ldots,\mathfrak b_g,
        \Gamma_1,\ldots,\Gamma_{p-1}
\]
form an adapted basis of \(H_1(Y,\mathbb Z)\), and in particular
\[
        \operatorname{rank}H_1(Y,\mathbb Z)=2g+p-1.
\]
For a closed real one-form \(\tau\) on \(Y\), we call
\[
        \int_{\Gamma_j}\tau
\]
its boundary periods and
\[
        \int_{\mathfrak a_j}\tau,
        \qquad
        \int_{\mathfrak b_j}\tau
\]
its handle periods. Stokes' theorem gives
\[
        \sum_{j=1}^p\int_{\Gamma_j}\tau=0,
\]
so only \(p-1\) boundary periods are independent. By de Rham's theorem, \(\tau\) is exact precisely when all its boundary and handle periods vanish.
It is useful to express the same decomposition in terms of harmonic forms. Define
\[
        \mathrm{Harm}_{h}^1(Y)
        :=
        \left\{
        \rho\in\mathrm{Harm}_{\hat{g}}^1(Y):
        \int_{\Gamma_j}\rho=0,\quad j=1,\ldots,p
        \right\}.
\]
By Lemma~\ref{lem:absolute-harmonic-representatives}, the handle-period map
\[
        \mathrm{Harm}_{h}^1(Y)\longrightarrow\mathbb R^{2g},
        \qquad
        \rho\longmapsto
        \left(
        \int_{\mathfrak a_1}\rho,\ldots,\int_{\mathfrak a_g}\rho,
        \int_{\mathfrak b_1}\rho,\ldots,\int_{\mathfrak b_g}\rho
        \right)
\]
is an isomorphism. Thus there are unique forms
\[
        \xi_1,\ldots,\xi_g,\zeta_1,\ldots,\zeta_g
        \in\mathrm{Harm}_{h}^1(Y)
\]
satisfying
\[
        \int_{\mathfrak a_k}\xi_j=\delta_{jk},
        \qquad
        \int_{\mathfrak b_k}\xi_j=0,
        \qquad
        \int_{\mathfrak a_k}\zeta_j=0,
        \qquad
        \int_{\mathfrak b_k}\zeta_j=\delta_{jk}.
\]
Together with Lemma~\ref{lem:boundary-harmonic-forms}, this gives the decomposition
\[
        \mathrm{Harm}_{\hat{g}}^1(Y)
        =
        \operatorname{span}\{\sigma_1,\ldots,\sigma_p\}
        \oplus
        \mathrm{Harm}_{h}^1(Y).
\]
The first summand has dimension \(p-1\) and controls the boundary periods, whereas the second has dimension \(2g\) and controls the handle periods.
\subsection{Neumann functions}

Neumann functions on finite bordered Riemann surfaces are classical; See \cite[Sec.~4.2]{SS54} \cite{AS60}.  In the planar
setting relevant to the present work, Widom constructs the corresponding
function from Green functions and harmonic measures in
\cite[Sec.~4]{Widom69},.  We give an explicit intrinsic construction, both
to fix our signs and normalizations and to make clear how the boundary and
handle periods are removed.

\begin{definition}
\label{def:neumann-function}
Let \(Y\) be a compact bordered Riemann surface with smooth boundary
\(\Gamma=\Gamma_1\cup\cdots\cup\Gamma_p\), and let
\(a,b\in Y^\circ\), \(a\neq b\).  A Neumann function with poles at \(a\)
and \(b\) is a real-valued function
\[
        N_Y(\,\cdot\,;a,b):Y\setminus\{a,b\}\longrightarrow\mathbb R,
\]
defined up to an additive constant, which is harmonic on
\(Y^\circ\setminus\{a,b\}\), satisfies
\[
        N_Y(q;a,b)=-\log|z(q)|+O(1)\quad(q\to a),\qquad
        N_Y(q;a,b)=\log|w(q)|+O(1)\quad(q\to b),
\]
in local coordinates \(z\) and \(w\) centred at \(a\) and \(b\),
respectively, and obeys
\[
        \frac{\partial N_Y}{\partial n_{\rm in}}=0
        \qquad\text{on }\Gamma .
\]
\end{definition}

\begin{proposition}
\label{prop:construction-neumann-function}
For every pair of distinct points \(a,b\in Y^\circ\), the Neumann function
\(N_Y(\,\cdot\,;a,b)\) exists and is unique up to an additive constant.
\end{proposition}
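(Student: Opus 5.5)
Uniqueness comes first. If \(N\) and \(N'\) are two Neumann functions with poles at \(a,b\), then \(h:=N-N'\) has removable singularities at \(a\) and \(b\). It is therefore harmonic on \(Y^\circ\), smooth up to \(\Gamma\) by boundary regularity, and satisfies \(\partial h/\partial n_{\rm in}=0\) on \(\Gamma\). Green's identity gives
\[
\int_Y dh\wedge *dh=\int_\Gamma h\,*dh=0,
\]
because \(*dh\) restricted to \(\Gamma\) is a multiple of the normal derivative times \(ds\). Hence \(dh=0\), and since \(Y\) is connected \(h\) is constant.

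For existence, we start from Dirichlet Green functions and add correcting terms. Set
\[
u_0(q):=g_E(a,q)-g_E(b,q).
\]
This function has the required logarithmic singularities (\(+1\) charge at \(a\), \(-1\) at \(b\)) and vanishes on \(\Gamma\), but its normal derivative does not vanish. We look for a harmonic \(v\) on \(Y^\circ\), smooth up to \(\Gamma\), with
\[
\frac{\partial v}{\partial n_{\rm in}}=-\frac{\partial u_0}{\partial n_{\rm in}}\quad\text{on }\Gamma,
\]
and put \(N_Y:=u_0+v\). By Lemma~\ref{lem:poisson-boundary-form}, this Neumann datum is \(\varphi\,ds\) with \(\varphi\,ds=(\Pi_b-\Pi_a)|_\Gamma\). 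Its total mass is \(2\pi(\omega^b(\Gamma)-\omega^a(\Gamma))=0\) by Lemma~\ref{lem:harmonic-measure-boundary-form}, so the compatibility condition for the Neumann problem holds. The Neumann problem \(\Delta v=0\), \(\partial_{n_{\rm in}}v=\psi\) with \(\int_\Gamma\psi\,ds=0\) is then solvable on the compact bordered surface.

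To keep the construction intrinsic and in line with the preceding subsections, we would solve it through conjugate differentials. We seek a harmonic \(v\) whose conjugate \(*dv\) restricts on \(\Gamma\) to the prescribed form \(-*du_0|_\Gamma\). Equivalently, we need a closed and co-closed one-form \(\tau=dv\) on \(Y\) with \(*\tau|_\Gamma\) prescribed. This is a Hodge--Morrey--Friedrichs problem with relative/absolute boundary data, solvable by \cite{Schwarz1995}. The remaining task is to force \(\tau\) to be exact, so that \(v\) is single-valued. The boundary periods of \(\tau\) are adjusted by subtracting the \(p-1\) independent forms spanned by the \(d\omega_j\); here we use the positive definiteness of \(\mathcal P\) on \((1,\ldots,1)^\perp\) from Lemma~\ref{lem:boundary-harmonic-forms}. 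The handle periods are then killed using elements of \(\mathrm{Harm}^1_h(Y)\) built from \(\xi_j,\zeta_j\); since these satisfy \(\iota_{n_{\rm in}}\rho=0\), subtracting them does not change the normal boundary data. By de Rham, the corrected form is exact, and integrating it gives \(v\).

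The main obstacle is this period-killing step. We must check that the corrections do not disturb the Neumann condition while removing all \(2g+p-1\) periods. Handle-period corrections are harmless by Lemma~\ref{lem:absolute-harmonic-representatives}. The boundary-period correction is the delicate one, because the forms \(d\omega_j\) are normal on \(\Gamma\), not tangential. If this intrinsic route becomes awkward, the fallback is to take the Schottky double \(\widehat Y\) and set \(N_Y:=G^{\widehat Y}\)-type potential \(\operatorname{Re}\int (\Omega_{a,b}+\Omega_{\sigma a,\sigma b})\). Here \(\Omega_{a,b}\) is the normalised third-kind differential with purely imaginary periods, and the symmetrisation under \(\sigma\) makes the normal derivative vanish on the fixed curve \(\Gamma\), while purely imaginary periods make the real part single-valued.
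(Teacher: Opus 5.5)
Your argument is correct, and the existence part takes a different route from the paper's. Uniqueness is the paper's Green-identity argument.

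\textbf{Your route versus the paper's.} You start from $u_0=g_E(a,\cdot)-g_E(b,\cdot)$, which is already single-valued and has the right singularities. You then correct only its normal derivative, by a scalar Neumann problem whose compatibility condition is exactly $\omega^a(\Gamma)=\omega^b(\Gamma)=1$ (Lemma~\ref{lem:harmonic-measure-boundary-form}). Since $v$ is a function, $dv$ is exact and no periods ever arise.

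The paper instead starts from the normalised third-kind differential $\eta_{a,b}$ on $X$. It solves a \emph{Dirichlet} problem for $h$ with boundary values $\operatorname{Im}\int\eta_{a,b}$, which is single-valued on each $\Gamma_j$ because $\Gamma_j$ bounds a disc in $X$. This makes $dH+*dh$ have zero normal component. The multivalued conjugate $\widetilde h$ then has to be paid for: boundary periods are removed with the tangential forms $\sigma_k=*d\omega_k$ (using invertibility of $\mathcal P$ on the sum-zero hyperplane), and handle periods with an absolute harmonic representative. In effect, the paper proves the Neumann solvability you cite by passing to the conjugate Dirichlet problem.

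Your route is shorter and works on any compact bordered surface with smooth boundary, without the embedding in $X$. The cost is that it outsources the analysis to the Fredholm alternative (or Lax--Milgram with a Poincar\'e inequality) for the Neumann problem. The paper's route needs only the Dirichlet problem plus Hodge--Morrey--Friedrichs, and it shows explicitly how boundary and handle periods are removed, which is its stated purpose.

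Your Schottky-double fallback is also a complete proof. Uniqueness of the normalised third-kind differential gives $\overline{\sigma^*\Omega_{a,b}}=\Omega_{\sigma a,\sigma b}$. Hence $\operatorname{Re}\int(\Omega_{a,b}+\Omega_{\sigma a,\sigma b})$ is $\sigma$-invariant, so its normal derivative vanishes on $\Gamma=\operatorname{Fix}\sigma$. It also has the even reflection of Lemma~\ref{lem:analytic-widom-41}(3) built in.

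\textbf{The ``intrinsic'' detour.} This part is unnecessary, and as written it contains an error. Once the scalar Neumann problem is solved there is nothing left to kill.

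The correction you describe would fail anyway. Each $d\omega_j$ is exact, so subtracting it changes no period of $\tau$. It is also purely normal on $\Gamma$, because $\omega_j$ is constant on each $\Gamma_k$, so subtracting it spoils the Neumann datum you just arranged.

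The forms that adjust boundary periods while preserving $\iota_{n_{\rm in}}\tau$ are the tangential $\sigma_j=*d\omega_j$ of Lemma~\ref{lem:boundary-harmonic-forms}. More simply, subtract the absolute harmonic representative of $[\tau]$ from Lemma~\ref{lem:absolute-harmonic-representatives}. Every element of $\mathrm{Harm}^1_{\hat g}(Y)$ has zero normal component, so this removes boundary and handle periods in one step. This is exactly the paper's final step.

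Finally, producing a closed, co-closed $\tau$ with prescribed normal component is no easier than the scalar Neumann problem itself. So the detour adds nothing to your first argument.
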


\begin{proof}
We use the notation of Section~\ref{sec:boundary-harmonic-forms}.  Embed
\(Y\) in a compact Riemann surface \(X\), and let \(\eta_{a,b}\) be the (normalized such that all the periods are imaginary) differential of
the third kind on \(X\), with residues \(-1\) at \(a\) and \(1\) at \(b\).
Thus
\[
        \operatorname{Res}_a\eta_{a,b}=-1,\qquad
        \operatorname{Res}_b\eta_{a,b}=1,\qquad
        \operatorname{Re}\int_\gamma\eta_{a,b}=0
\]
for every closed curve \(\gamma\subset X\setminus\{a,b\}\).

Locally write
\[
        \int^q\eta_{a,b}=H(q)+iA(q).
\]
The real part \(H\) is single-valued on \(Y\setminus\{a,b\}\) and has the
required logarithmic singularities.  Moreover, \(A\) is single-valued
along each boundary component.  Indeed, \(\Gamma_j\) bounds one of the discs, on which \(\eta_{a,b}\) is holomorphic, and hence
\(\int_{\Gamma_j}\eta_{a,b}=0\).

Let \(h\) be the harmonic function on \(Y\) whose boundary values agree
with those of \(A\).  If \(\widetilde h\) is a local harmonic conjugate of
\(h\), then
\[
        H+\widetilde h+i(A-h)
\]
is holomorphic.  Its imaginary part vanishes on \(\Gamma\), so the
Cauchy--Riemann equations show that the real part has zero normal
derivative there.  Consequently,
\[
        \beta_0:=dH+*dh
\]
is a globally defined closed and co-closed real one-form on
\(Y\setminus\{a,b\}\), with
\[
        \beta_0(n_{\rm in})=0\qquad\text{on }\Gamma .
\]
Thus \(\beta_0\) already has the correct singularities and boundary
behaviour.  The only remaining obstruction to integrating it to a
single-valued function is its periods.

We first remove the boundary periods.  Let
\(\sigma_k=*d\omega_k\) and \(\mathcal P\) be as in
Lemma~\ref{lem:boundary-harmonic-forms}, and put
\[
        b_j:=\int_{\Gamma_j}\beta_0,\qquad j=1,\ldots,p.
\]
Stokes' theorem, applied to \(Y\) with small discs about \(a\) and \(b\)
removed, gives \(\sum_j b_j=0\); the integrals over the small circles
vanish because logarithmic differentials have zero periods.  By Lemma \ref{lem:boundary-harmonic-forms}
\(\mathcal P\) is invertible on the hyperplane 
\(\sum_k\lambda_k=0\), there are unique real numbers
\(\lambda_1,\ldots,\lambda_p\), subject to this normalization, such that
\[
        \sum_{k=1}^p\lambda_k\mathcal P_{jk}=-b_j,
        \qquad j=1,\ldots,p.
\]
It follows that
\[
        \beta_1:=\beta_0+\sum_{k=1}^p\lambda_k\sigma_k
\]
is closed and co-closed, has zero normal component on \(\Gamma\), and has
zero period around every boundary component.

The periods of \(\beta_1\) around small loops enclosing \(a\) and \(b\)
also vanish.  Its remaining periods therefore define a class in
\(H^1_{\mathrm{dR}}(Y;\mathbb R)\).  Let \(\rho\) be the absolute harmonic
representative of this class, as in
Lemma~\ref{lem:absolute-harmonic-representatives}.  Then
\[
        d\rho=0,\qquad d^\ast\rho=0,\qquad
        \rho(n_{\rm in})=0\quad\text{on }\Gamma,
\]
and \(\rho\) has the same handle periods as \(\beta_1\).  Hence
\[
        \alpha:=\beta_1-\rho
\]
has zero period around every closed curve in
\(Y\setminus\{a,b\}\).  Since \(\alpha\) is closed, it is exact, and we may
define
\[
        N_Y(q;a,b):=C+\int_{q_0}^{q}\alpha,
        \qquad q_0\in Y^\circ\setminus\{a,b\}.
\]
The integral is independent of the path.  Since \(d^\ast\alpha=0\), the
function \(N_Y\) is harmonic away from \(a\) and \(b\), while
\[
        \frac{\partial N_Y}{\partial n_{\rm in}}
        =
        \alpha(n_{\rm in})=0
        \qquad\text{on }\Gamma .
\]
All the correction forms are smooth near \(a\) and \(b\), so the
singularities come entirely from \(H\).  Thus
\[
        N_Y(q;a,b)=-\log|z(q)|+O(1)\quad(q\to a),\qquad
        N_Y(q;a,b)=\log|w(q)|+O(1)\quad(q\to b).
\]
This proves existence.

If \(N_1\) and \(N_2\) are two Neumann functions with the same poles, then
\(u=N_1-N_2\) is bounded and harmonic near \(a\) and \(b\), and hence
extends harmonically across both points.  Since
\(\partial u/\partial n_{\rm in}=0\) on \(\Gamma\), Green's identity gives
\[
        \int_Y|\nabla u|^2\,dA
        =
        -\int_\Gamma
        u\,\frac{\partial u}{\partial n_{\rm in}}\,ds
        =
        0.
\]
Therefore \(u\) is constant, proving uniqueness up to an additive
constant.
\end{proof}
\begin{lemma}
\label{lem:analytic-widom-41}
Let \(K\Subset Y^\circ\).  There is a two-sided analytic collar \(U_K\)
of \(\Gamma\), whose \(Y\)-side is disjoint from \(K\), with the following
properties.

\begin{enumerate}
\item For \(a\in K\), let \(g_a(p)=g_E(p,a)\).  The function \(g_a\) extends
harmonically through \(\Gamma\) to \(U_K\), and
\[
        d\mathcal G_a:=2\partial g_a
\]
extends there as a holomorphic differential.  The collar may be chosen so
that \(d\mathcal G_a\) has no zeros in \(U_K\), for every \(a\in K\).  Thus
every local branch of
\[
        \mathcal G_a=g_a+i\widetilde g_a
\]
and hence of \(\Phi_a=\exp\mathcal G_a\) continues holomorphically through
the boundary.  These local continuations have unitary monodromy and define
a non-vanishing holomorphic section of the corresponding flat unitary line
bundle.

\item Each harmonic measure \(\omega_j\) extends harmonically to \(U_K\),
and the differential
\[
        d\Omega_j:=2\partial\omega_j
\]
extends holomorphically there.

\item If \(a,b\in K\) and \(a\ne b\), then
\(N_Y(\,\cdot\,;a,b)\) extends harmonically to \(U_K\).  Consequently every
local branch of
\begin{equation}\label{eq:expneu}
       \Psi_{a,b}
        :=
        \exp\bigl(
        N_Y(\,\cdot\,;a,b)
        +i\widetilde N_Y(\,\cdot\,;a,b)
        \bigr)
\end{equation}

continues holomorphically and without zeros through \(\Gamma\), again as a
section of its flat unitary line bundle.

\item Let \(f\) be real analytic on \(\Gamma\), and let \(u_f\) be its
harmonic extension to \(Y\).  There is a two-sided collar
\(U_f\subset U_K\), possibly depending on \(f\), to which \(u_f\) extends
harmonically.  It follows that
\[
        R_f:=\exp(u_f+i\widetilde u_f)
\]
continues through \(\Gamma\) as a non-vanishing holomorphic section of a
flat unitary line bundle, with
\[
        |R_f|=e^f
        \qquad\text{on }\Gamma.
\]
\end{enumerate}
\end{lemma}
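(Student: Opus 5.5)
The plan is to straighten each boundary curve by a real-analytic coordinate and then apply the Schwarz reflection principle uniformly in the parameters. Since every $\Gamma_j$ is a real-analytic Jordan curve, there are a neighbourhood $V_j$ of $\Gamma_j$ in $X$ and a biholomorphism $\phi_j:V_j\to A_j=\{1-\delta<|w|<1+\delta\}$ with $\phi_j(\Gamma_j)=\{|w|=1\}$ and $\phi_j(V_j\cap Y)=\{|w|\ge 1\}\cap A_j$. Fix $\delta_0$ so that the closures of the $V_j$ are disjoint, avoid $P_\infty$, and have $Y$-side disjoint from $K$. Write $w^*=1/\bar w$ for the reflection in the unit circle. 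A harmonic function $u$ on $Y$ near $\Gamma_j$ with real-analytic boundary values $\varphi$ is continued as follows. First subtract a harmonic function $H$ on a full annulus with $H=\varphi$ on $|w|=1$; such an $H$ exists by Cauchy--Kovalevskaya, or equivalently by complexifying $\varphi$. The difference $u-H$ then vanishes on $|w|=1$, so the Schwarz reflection $(u-H)(w^*)=-(u-H)(w)$ continues it harmonically. The constructions in the items differ only in what the boundary data is.

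\emph{Item (i).} Here $g_a\circ\phi_j^{-1}$ has zero boundary values, so odd reflection extends it to all of $A_j$, for every $a\in K$ and on the same annulus. Then $d\mathcal G_a=2\partial g_a$ is holomorphic on $V_j$. On $\Gamma$ it equals $i\,\partial_{n}g_a\,(\text{tangent})$ up to a nonzero factor, and Hopf's lemma (as in Lemma~\ref{lem:poisson-boundary-form}) shows it does not vanish there. The map $(a,q)\mapsto g_E(q,a)$ is jointly continuous, and in fact harmonic in $a$ for $q$ near $\Gamma$, by symmetry of the Green function. Hence the reflected $d\mathcal G_a$ depends continuously on $(a,q)\in K\times \overline{V_j'}$ for a slightly smaller annulus. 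Since $K\times\Gamma$ is compact and $|d\mathcal G_a|>0$ there, $|d\mathcal G_a|$ has a positive lower bound on $K\times\Gamma$. This bound persists on $K\times\{1-\delta_K<|w|<1+\delta_K\}$ for some $\delta_K>0$; this is where the uniformity in $a$ comes from, and it is the main point of the lemma. The local primitives $\mathcal G_a$ continue along paths in the collar, and $\exp\mathcal G_a$ continues with them. The monodromy of $\widetilde g_a$ is real, so the multipliers are unimodular, and $\Phi_a$ is a nonvanishing section of the flat unitary bundle extended over $Y\cup U_K$ (the collar retracts onto $\Gamma$, so no new homology is introduced).

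\emph{Item (ii).} The function $\omega_j$ is locally constant on $\Gamma$, so $\omega_j-\delta_{jk}$ reflects oddly across $\Gamma_k$; the same annulus therefore works, and $2\partial\omega_j$ is holomorphic there.

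\emph{Item (iii).} The Neumann condition gives even reflection: $N(w^*)=N(w)$ is harmonic because $\partial_n N=0$ on the circle. Equivalently, the holomorphic function $N+i\widetilde N$ has $\operatorname{Re}\partial_w$-structure making $d(N+i\widetilde N)$ real on the circle in the tangent direction. As before, the monodromy of the exponential is unimodular.

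\emph{Item (iv).} The harmonic extension $u_f$ has real-analytic Dirichlet data, so it reflects as in the first paragraph. The radius of convergence of the complexified $f$ dictates the collar $U_f$, which is why it may depend on $f$. Then $|R_f|=e^{u_f}=e^f$ on $\Gamma$.

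Finally, $U_K$ is taken to be the union of the shrunken annuli. The expected obstacle is precisely the uniform nonvanishing of $d\mathcal G_a$ near $\Gamma$ for $a\in K$, and the argument above handles it by combining Hopf's lemma with compactness of $K\times\Gamma$.
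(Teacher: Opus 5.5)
Your proposal is correct and follows essentially the same route as the paper: analytic straightening of $\Gamma$ combined with Schwarz reflection. The reflection is odd for the constant Dirichlet data of $g_a$ and $\omega_j$, even for the Neumann function, and is applied after subtracting the real part of a holomorphic complexification for real-analytic $f$. Uniform nonvanishing of $d\mathcal G_a$ comes from Hopf's lemma, joint continuity in $(a,q)$, and compactness of $K\times\Gamma$, and unitary monodromy comes from the real periods of the harmonic conjugate.

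The only difference is cosmetic: you use one global annular coordinate per boundary curve rather than local boundary charts patched by uniqueness. Note that the reflection $w\mapsto 1/\bar w$ only fills $\{1/(1+\delta)<|w|<1\}$, not all of $A_j$, so the collar shrinks slightly, which is harmless.
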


\begin{proof}
The Schwarz reflection principle for harmonic functions is the only local
input; see, for example, \cite[Sec.~7.5.2]{KrantzGuide}.  Since \(\Gamma\)
is real analytic, every point of \(\Gamma\) has a two-sided holomorphic
coordinate \(z=x+iy\) in which \(\Gamma=\{y=0\}\) and \(Y\) lies in
\(\{y\ge0\}\).  If a harmonic function \(v\) has the constant boundary
value \(c\), then it is continued to \(y<0\) by
\[
        \widehat v(x,y)=2c-v(x,-y).
\]
If instead \(\partial_yv=0\) on \(y=0\), its continuation is
\[
        \widehat v(x,y)=v(x,-y).
\]
The local extensions agree on overlaps by uniqueness and therefore patch
along each boundary component.

Choose the collar so that its \(Y\)-side is disjoint from \(K\).  For
\(a\in K\), the function \(g_a\) is harmonic in this collar and vanishes on
\(\Gamma\), so the first reflection formula applies.  Since a harmonic
function has holomorphic differential \(2\partial g_a\), the reflected
differential is holomorphic.  On the boundary,
\[
        \iota^*(2\partial g_a)=-i\Pi_a,
\]
where \(\Pi_a\) is the positive Poisson boundary form from
Lemma~\ref{lem:poisson-boundary-form}.  The Green kernel is smooth off the
diagonal, so \((q,a)\mapsto\Pi_a(q)\) is continuous on the compact set
\(\Gamma\times K\).  Hopf's lemma makes it strictly positive there.  Thus,
after shrinking the collar once, \(2\partial g_a\) has no zeros in \(U_K\)
for any \(a\in K\).  On a simply connected part of the collar, a primitive
of \(2\partial g_a\) can be normalized to have real part \(g_a\); it is then
a branch of \(\mathcal G_a\), and exponentiation gives the continuation of
\(\Phi_a\).

On \(\Gamma_\ell\), the harmonic measure \(\omega_j\) has the constant
boundary value \(\delta_{j\ell}\).  We therefore reflect
\(\omega_j-\delta_{j\ell}\) oddly, or equivalently set
\[
        \widehat\omega_j(x,y)
        =
        2\delta_{j\ell}-\omega_j(x,-y),
        \qquad y<0.
\]
This proves the second assertion.  For the Neumann function the poles stay
away from the collar, while
\(\partial N_Y/\partial n=0\) on \(\Gamma\); the even reflection therefore
gives the third assertion.

Finally, let \(f\) be real analytic.  In a boundary coordinate it has a
holomorphic extension \(A\) to a possibly smaller neighbourhood of the real
axis, with \(A=f\) on that axis.  The harmonic function
\(u_f-\operatorname{Re}A\) vanishes on the boundary and hence extends by odd
reflection.  Adding back \(\operatorname{Re}A\) gives the asserted
continuation of \(u_f\).  The size of this neighbourhood may depend on the
analyticity radius of \(f\), which is why the collar is denoted by \(U_f\).
For a family with uniformly bounded holomorphic complexifications in a
fixed collar, the same reflection formula and the maximum principle give
the stated uniform choice.

For any single-valued real harmonic function \(v\), the periods of a local
conjugate \(\widetilde v\) are real.  Analytic continuation of
\(\exp(v+i\widetilde v)\) therefore changes a branch only by a unimodular
constant.  This proves all the line-bundle statements.
\end{proof}

For the boundary argument later on, one also needs the parameter-uniform
local form of a Neumann factor which is contained in Widom's
Lemma~4.1(2).  We record it separately.

We shall need the behaviour of a Neumann factor when its zero approaches the
boundary. The Neumann function is determined only up to an additive constant.  We fix
this ambiguity by the intrinsic normalization
\[
        \int_\Gamma
        N_Y(q;a,b)\,d\omega^{P_\infty}(q)=0,
\]
where \(d\omega^{P_\infty}\) is harmonic measure at \(P_\infty\).  With this choice, \(N_Y(\,\cdot\,;a,b)\) depends continuously on
\((a,b)\), locally uniformly in \(q\) away from the  singularities.
\[
        N_Y(\,\cdot\,;a,b)
        =
        -N_Y(\,\cdot\,;b,a),
\]
and we set \(N_Y(\,\cdot\,;a,a)=0\).  Let \(K\Subset Y^\circ\),
and let \(I\) be a closed boundary arc contained in an analytic coordinate
neighbourhood \(V\), disjoint from \(K\).  Choose the coordinate
\(z=s+it\) so that
\[
        \Gamma\cap V=\{t=0\},
        \qquad
        Y\cap V=\{t\ge0\}.
\]

Suppose that \(a\in K\) and that \(b\in V\cap Y^\circ\).  Even reflection of
\(N_Y(\,\cdot\,;a,b)\) across the boundary produces a second logarithmic
singularity at the mirror point \(\overline{z(b)}\).  It follows that
\[
        N_Y^{\mathrm{ext}}(p;a,b)
        -\log|z(p)-z(b)|
        -\log|z(p)-\overline{z(b)}|
\]
is harmonic in a smaller two-sided neighbourhood of \(I\).  Consequently,
on \(I\),
\begin{equation}\label{eq:dz}
            e^{N_Y(q;a,b)}
        =
        u(q;a,b)\,|z(q)-z(b)|^2,
\end{equation}

where \(u\) is positive, real analytic in \(q\), and continuous in all its
variables as \(b\) approaches the boundary.  After shrinking \(V\), if
necessary, \(u\) is bounded above and below by positive constants, uniformly
for \(q\in I\), \(a\in K\), and \(b\in V\cap Y^\circ\).  Thus, writing
\(z(q)=s\) and \(z(b)=\alpha+i\tau\), one obtains the familiar local form
\[
        e^{N_Y(q;a,b)}
        =
        u(s;a,b)\bigl((s-\alpha)^2+\tau^2\bigr).
\]
If \(b\) remains in a compact set separated from \(I\), the Neumann factor
itself is bounded above and below on \(I\) by positive constants.

\section{Duality and Proofs of Theorem \ref{thm:duality}}
Let \(h\) be the solution of the Dirichlet problem on \(Y\) with boundary values \(\log\rho\). Locally choose a harmonic conjugate \(\widetilde h\) and put
\begin{equation}
      R_\rho:=\exp(h+i\widetilde h).
\end{equation}
      
Although \(\widetilde h\) need not be single-valued, its increments along closed curves are real constants. Thus \(R\) is a nowhere-vanishing  section of the line bundle with unitary character, and
\begin{equation}
        |R_\rho|=\rho
        \qquad\text{on }\Gamma.
\end{equation}
The harmonic function \(h\) is uniquely determined, while its harmonic conjugate is determined up to an additive real constant. Accordingly, \(R_\rho\) is determined up to a unimodular factor. We fix this factor by choosing the branch at \(P_\infty\) for which
\[
        R\rho(P_\infty)>0.
\]

\label{ss:proof-duality}

\subsection{Proof of Theorem~\ref{thm:duality}}
\label{ss:proof-duality2}
\begin{proof}
For \(\rho\equiv1\), this is the rank-one case of Widom's
bundle-valued duality lemma
\cite[Sec.~3, Lemma, pp.~308--309]{Widom71a}, which also guarantees existence of the extremal section.
We only explain how to remove the weight.

Let \(R_\rho\) be the outer section associated with \(\rho\), normalized by
\[
        |R_\rho|=\rho\quad\text{on }\Gamma,
        \qquad
        r_\rho:=R_\rho(P_\infty)>0,
\]
and let \(L_\rho\) be the flat unitary line bundle determined by its
monodromy.  The construction of \(R_\rho\) and this removal of the
weight are the same as in
\cite[p.~155 and Eq.~(5.20), p.~159]{Widom69}.  Since \(\rho\) is
positive and continuous, \(R_\rho\) and \(R_\rho^{-1}\) are bounded.

Put \(\widehat L=L\otimes L_\rho\).  The changes of variables
\[
        \widehat F=\frac{F R_\rho}{r_\rho},
        \qquad
        \widehat\eta=\frac{\eta}{R_\rho}
\]
give bijections between the corresponding weighted and unweighted
classes, and
\[
        \widehat F(P_\infty)=1,\qquad
        \|\widehat F\|_\infty
        =r_\rho^{-1}\|F\|_{\rho,\infty},
\]
while
\[
        \|\widehat\eta\|_1
        =\|\eta\|_{\rho^{-1},1},
        \qquad
        \operatorname{Res}_{P_\infty}\widehat\eta
        =r_\rho^{-1}\operatorname{Res}_{P_\infty}\eta.
\]
Consequently,
\[
        \mu(L,\rho)=r_\rho\,\mu(\widehat L,1).
\]

\end{proof}
\begin{lemma}
\label{lem:dual-attainment}
The supremum in Theorem~\ref{thm:duality} is attained.
\end{lemma}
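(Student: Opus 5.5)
I will argue by normal families on the dual side, reducing first to the unweighted case. The change of variables $\widehat\eta=\eta/R_\rho$ from the proof of Theorem~\ref{thm:duality} is a bijection between $\mathcal H^1(L^{-1};P_\infty)$ and $\mathcal H^1(\widehat L^{-1};P_\infty)$. It multiplies the residue by $r_\rho^{-1}$ and preserves the norm. Maximisers therefore correspond, and it suffices to treat $\rho\equiv1$. By homogeneity I normalise $\operatorname{Res}_{P_\infty}\eta=1$ and minimise $\|\eta\|_1=\frac1{2\pi}\int_\Gamma|\eta|$ over this affine class. If the class is nonempty, the infimum equals $1/\mu(L)$. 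It is nonempty because, for instance, an extremal $F$ with $|F|=\mu$ on $\Gamma$ can be paired with it; alternatively, $\mu(L)<\infty$ holds since $\mathcal H^\infty(Y,L)$ contains bounded sections with $F(P_\infty)=1$ (take $\psi_\chi$ from \eqref{eq:nvlb}), and duality then forces the supremum to be positive. Choose a minimising sequence $\eta_k$ with $\operatorname{Res}_{P_\infty}\eta_k=1$ and $\|\eta_k\|_1\to1/\mu(L)$.

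The key step is a uniform local bound. Write $\eta_k=f_k\,\tau$, where $\tau$ is the fixed nowhere-vanishing section of $K_Y\otimes L^{-1}(P_\infty)$ that extends to a neighbourhood of $Y$, built from $\psi_\chi$ and a nonvanishing differential on the open surface $Y'$. Then $|f_k|$ has a least harmonic majorant $u_k$, and $u_k$ is the Poisson integral of the boundary values $|f_k|$. By Lemma~\ref{lem:harmonic-measure-boundary-form}, $u_k(p)=\frac1{2\pi}\int_\Gamma|f_k|\,\Pi_p$. Since $\Pi_p/|\tau|$ is bounded above on $\Gamma$ uniformly for $p$ in a compact $K\Subset Y^\circ$, this gives $\sup_K|f_k|\le C_K\|\eta_k\|_1$. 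Hence $(f_k)$ is a normal family on $Y^\circ$. Note that $P_\infty$ is not a problem, because the simple pole is absorbed into $\tau$. Pass to a subsequence with $f_k\to f$ locally uniformly, and set $\eta=f\tau$. The residue functional is evaluation of $f$ at $P_\infty$ in the fixed frame, so it is continuous under local uniform convergence and $\operatorname{Res}_{P_\infty}\eta=1$. In particular $\eta\ne0$.

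The main obstacle is lower semicontinuity of the boundary $L^1$ norm, since the boundary traces need not converge. I would handle it by exhausting $Y$ with the level sets $\Gamma_\varepsilon=\{g_E=\varepsilon\}$, which are analytic curves close to $\Gamma$ by Lemma~\ref{lem:analytic-widom-41}. On $\Gamma_\varepsilon$ the convergence is uniform, so $\int_{\Gamma_\varepsilon}|\eta|=\lim_k\int_{\Gamma_\varepsilon}|\eta_k|$. In the Hardy class, $\int_{\Gamma_\varepsilon}|\eta_k|$ is controlled by $\int_\Gamma|\eta_k|$: the Poisson-majorant representation above, integrated over $\Gamma_\varepsilon$ against the harmonic-measure kernel, gives a bound $(1+o_\varepsilon(1))\|\eta_k\|_1$, with $o_\varepsilon(1)$ uniform in $k$ because $\Pi_p$ converges to the boundary delta in the weak sense as $p\to\Gamma$. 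Letting $\varepsilon\to0$ shows that $|f|$ has a harmonic majorant, so $\eta\in\mathcal H^1(L^{-1};P_\infty)$, and that $\|\eta\|_1\le\liminf_k\|\eta_k\|_1=1/\mu(L)$. Consequently $\eta$ attains the supremum. If the uniform estimate on the curves $\Gamma_\varepsilon$ proves awkward, I would fall back on weak-$*$ compactness of the boundary measures $|\eta_k|$ on $\Gamma$ combined with the F. and M. Riesz theorem on $Y$. This identifies the limit measure as absolutely continuous, with density dominating $|\eta|$.
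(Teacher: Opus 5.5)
Your proposal is correct and is essentially the paper's argument (Widom's maximizing-sequence proof): reduce to $\rho\equiv1$ by $\widehat\eta=\eta/R_\rho$, bound the sequence locally via the Poisson representation of the least harmonic majorant, extract a locally uniformly convergent subsequence, pass the residue to the limit, and use lower semicontinuity of the $\mathcal H^1$ norm, which the paper merely asserts and you try to justify. Two small corrections to that justification. First, uniformity in $k$ of your level-curve bound needs the swept density $q\mapsto\frac1{2\pi}\int_{\Gamma_\varepsilon}\Pi_p(q)\,|\tau|(p)$ to converge to $|\tau|(q)$ \emph{uniformly} on $\Gamma$; this is true for analytic $\Gamma$, but it does not follow from weak convergence of $\Pi_p$ alone. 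Second, in your fallback the weak-$*$ limit of the measures $|\eta_k|$ is not a priori absolutely continuous, since mass can concentrate; F.~and M.~Riesz should instead be applied to the weak-$*$ limit of the complex measures $\iota^*\eta_k$, whose total variation is at most $\liminf_k\int_\Gamma|\eta_k|$.
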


\begin{proof}
After the changes of variables used above, this is the unweighted
existence statement.  It follows from Widom's maximizing-sequence
argument \cite[p.~151]{Widom69}.  Indeed, a maximizing
sequence is locally bounded, so a subsequence converges uniformly on
compact subsets.  The residue passes to the limit, while the
\(\mathcal{H}^1\)-norm is lower semicontinuous.  If the norm of the limit were
strictly less than one, rescaling it would give a larger value than the
supremum.  The limit therefore has norm one and attains the supremum.
Multiplying back by \(R_\rho\) gives the required differential in
\(\mathcal H^1(L^{-1};P_\infty)\).
\end{proof}

The maximizing differential need not be unique when the boundary has
more than one component; see \cite[p.~153]{Widom69}.

\section{Widom's product formula}
\label{sec:widom-product}

Let
\[
        g(p):=g_E(p,P_\infty)
\]
be the Dirichlet Green function of \(Y\) with pole at \(P_\infty\), and set
\[
        d\mathcal G:=2\partial g=dg+i*dg .
\]
Away from \(P_\infty\), the function \(g\) is harmonic, and hence
\(d\mathcal G\) is holomorphic.  In the fixed local coordinate \(\zeta\)
at \(P_\infty\), one has
\[
        g=-\log|\zeta|+h,
\]
where \(h\) is harmonic near \(P_\infty\).  Consequently,
\[
        d\mathcal G
        =
        -\frac{d\zeta}{\zeta}+2\partial h .
\]
Thus \(d\mathcal G\) is a meromorphic differential on \(Y^\circ\), with
a simple pole of residue \(-1\) at \(P_\infty\).

Let \(\iota:\Gamma\hookrightarrow Y\) denote the boundary inclusion.
Since \(g\) is constant on \(\Gamma\), one has \(\iota^*(dg)=0\).  Set
\[
        \beta_g:=-\iota^*(*dg).
\]
The boundary form \(\beta_g\) is smooth and positive by Hopf's lemma;
indeed, \(\beta_g/(2\pi)\) is harmonic measure at \(P_\infty\).
Therefore
\begin{equation}\label{eq:dG-boundary-product}
        \iota^*(d\mathcal G)=-i\beta_g,
        \qquad
        \bigl|\iota^*(d\mathcal G)\bigr|=\beta_g .
\end{equation}

In a planar exterior domain, write
\(d\mathcal G=G'(\zeta)\,d\zeta\).  Widom orients the planar boundary
curves oppositely to the Stokes orientation of the exterior domain;
with his convention, \eqref{eq:dG-boundary-product} becomes
\[
        iG'(\zeta)^{-1}\frac{|d\zeta|}{d\zeta}>0,
\]
which is \cite[(5.7)]{Widom69}.  With the Stokes orientation used
here, the left-hand side has the opposite sign.  There is also a
difference in the normalization at infinity: Widom takes
\(zf(z)\to1\), so that \(f(z)\,dz\) has residue \(-1\) at infinity,
whereas our dual differential is normalized to have positive residue.

We have the following lemma.

\begin{lemma}
\label{lem:widom-boundary-relations}
Let \(F_0\) be extremal for \(\mu(L,\rho)\), and let
\[
        \eta_0\in
        \mathcal H^1\bigl(Y,K_Y\otimes L^{-1}(P_\infty)\bigr)
\]
be a dual extremal, normalized by
\[
        \frac1{2\pi}\int_\Gamma |\eta_0|\rho^{-1}=1,
        \qquad
        \operatorname{Res}_{P_\infty}\eta_0
        =\mu(L,\rho)>0 .
\]
Then, almost everywhere on \(\Gamma\),
\[
        |F_0|\rho=\mu(L,\rho),
        \qquad
        \frac1{i}\,\iota^*(F_0\eta_0)\ge0 .
\]
Here the second inequality means that
\(\frac1{i}\,\iota^*(F_0\eta_0)\) is a non-negative boundary measure
with respect to the Stokes orientation.  Equivalently,
\[
        \frac{\iota^*(F_0\eta_0)}
             {\iota^*(d\mathcal G)}
        \le0
        \qquad\text{almost everywhere on \(\Gamma\)}.
\]
\end{lemma}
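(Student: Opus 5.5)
The plan is the standard duality pairing argument: the product $F_0\eta_0$ is an ordinary scalar differential, so the residue theorem applies to it. Both extremality statements then fall out by squeezing a chain of inequalities into equalities.

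\textbf{The pairing.} Since $F_0\in\mathcal H^\infty(Y,L)$ and $\eta_0\in\mathcal H^1(Y,K_Y\otimes L^{-1}(P_\infty))$, the characters cancel. Hence $\omega:=F_0\eta_0$ is a single-valued meromorphic differential on $Y^\circ$. It is holomorphic except for a simple pole at $P_\infty$, where its residue is $F_0(P_\infty)\operatorname{Res}_{P_\infty}\eta_0=\mu(L,\rho)$, and it has integrable nontangential boundary values. I would apply the residue theorem on the exhausting subdomains $Y_\varepsilon=\{g_E>\varepsilon\}$, whose boundaries are analytic curves near $\Gamma$. This gives $\int_{\partial Y_\varepsilon}\omega=-2\pi i\,\mu(L,\rho)$, up to the sign convention coming from the Stokes orientation and the point $P_\infty$ lying inside. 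I would then pass to the limit $\varepsilon\to0$. This is the step needing care: it requires the $\mathcal H^1$ convergence of the boundary integrals, which holds because $|F_0\eta_0|\le\|F_0\|_\infty|\eta_0|$ and $\mathcal H^1$ differentials have boundary values attained in $L^1$ (Heins, Widom71a, as cited before Theorem~\ref{thm:duality}). The limit gives
\[
2\pi\mu(L,\rho)=\Bigl|\int_\Gamma \iota^*(F_0\eta_0)\Bigr|\le\int_\Gamma |F_0|\rho\,|\eta_0|\rho^{-1}\le \|F_0\|_{\rho,\infty}\int_\Gamma|\eta_0|\rho^{-1}=2\pi\mu(L,\rho).
\]

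\textbf{Consequences of equality.} All the inequalities above must be equalities. Equality in the second inequality forces $|F_0|\rho=\mu$ almost everywhere on the support of $|\eta_0|$. To upgrade this to almost everywhere on $\Gamma$, I would use the fact that a nonzero $\mathcal H^1$ differential cannot vanish on a boundary set of positive measure. This is the F. and M. Riesz type uniqueness, obtained by writing $\eta_0=f\tau$ with $\tau$ a nonvanishing frame and applying the uniqueness result to the $\mathcal H^1$ function $f$. Equality in the triangle inequality means that $\iota^*(F_0\eta_0)$ has a constant argument almost everywhere. Its integral equals $2\pi i\mu$ times the orientation sign, so after checking the sign against the residue theorem this reads $\frac1i\iota^*(F_0\eta_0)\ge0$.

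\textbf{Equivalent form and main obstacle.} For the equivalent formulation, divide by \eqref{eq:dG-boundary-product}, $\iota^*(d\mathcal G)=-i\beta_g$ with $\beta_g>0$. This gives $\iota^*(F_0\eta_0)/\iota^*(d\mathcal G)=\frac{1}{i}\iota^*(F_0\eta_0)\cdot(-1/\beta_g)\le0$. The main obstacle is bookkeeping of signs and orientation: the direction of $\Gamma$ relative to $Y$, and the residue sign at the interior pole $P_\infty$. This is exactly what distinguishes the present convention from Widom's (5.7). The second delicate point is justifying the boundary limit and the Riesz uniqueness in the bundle-valued setting. For this I would reduce to scalar functions via the nonvanishing sections $\psi_\chi$ of \eqref{eq:nvlb}.
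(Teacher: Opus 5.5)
Your proposal is correct and follows essentially the paper's proof: the residue theorem for the scalar differential $F_0\eta_0$, the squeeze $\mu(L,\rho)\le\frac1{2\pi}\int_\Gamma|F_0|\,|\eta_0|\le\mu(L,\rho)$, then $|\eta_0|>0$ a.e.\ on $\Gamma$ (the paper cites Rudin for this where you invoke the Riesz-type uniqueness theorem), and finally division by $\iota^*(d\mathcal G)=-i\beta_g$. Your exhaustion by $\{g_E>\varepsilon\}$ only makes explicit the passage to the boundary that the paper leaves implicit.

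The one slip is your tentative sign, which is not a matter of convention. With the Stokes orientation and $P_\infty$ in the interior, the residue theorem gives $\int_{\partial Y_\varepsilon}F_0\eta_0=+2\pi i\,\mu(L,\rho)$, not $-2\pi i\,\mu(L,\rho)$. This is consistent with $\int_\Gamma d\mathcal G=-2\pi i$ and $\operatorname{Res}_{P_\infty}d\mathcal G=-1$. It is exactly this plus sign that yields $\frac1i\,\iota^*(F_0\eta_0)\ge0$.
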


\begin{proof}
Since \(F_0(P_\infty)=1\), the residue theorem gives
\[
        \mu(L,\rho)
        =
        \operatorname{Res}_{P_\infty}(F_0\eta_0)
        =
        \frac1{2\pi i}\int_\Gamma F_0\eta_0 .
\]
Therefore
\[
\begin{aligned}
        \mu(L,\rho)
        &=
        \left|
        \frac1{2\pi i}\int_\Gamma F_0\eta_0
        \right|                                                     \\
        &\le
        \frac1{2\pi}\int_\Gamma |F_0|\,|\eta_0|                      \\
        &\le
        \|F_0\|_{\rho,\infty}
        \frac1{2\pi}\int_\Gamma |\eta_0|\rho^{-1}
        =
        \mu(L,\rho).
\end{aligned}
\]
Thus equality holds throughout.  Equality in the second inequality gives
\[
        |F_0|\rho=\mu(L,\rho)
\]
wherever the boundary value of \(\eta_0\) is nonzero.  Since
\(\operatorname{Res}_{P_\infty}\eta_0\ne0\), the differential \(\eta_0\)
is not identically zero, and we have  \(|\eta_0|>0\) almost everywhere on \(\Gamma\); see
\cite[p.~62]{Rudin1955}.  Hence the first boundary relation holds almost
everywhere.

Equality in the first inequality says that the boundary differential
\(F_0\eta_0\) has constant argument.  Since
\[
        \int_\Gamma F_0\eta_0
        =
        2\pi i\,\mu(L,\rho),
\]
this argument is \(i\), and therefore
\[
        \frac1{i}\,\iota^*(F_0\eta_0)\ge0 .
\]
Finally, using
\[
        \iota^*(d\mathcal G)=-i\beta_g,
        \qquad
        \beta_g>0,
\]
we obtain
\[
        \frac{\iota^*(F_0\eta_0)}
             {\iota^*(d\mathcal G)}
        =
        -\frac{\frac1{i}\,\iota^*(F_0\eta_0)}
               {\beta_g}
        \le0
\]
almost everywhere on \(\Gamma\).
\end{proof}
We next record the local continuation principle for the extremal pair.
It is contained in
\cite[Lemmas~4.3--4.4]{Read1958}.

\begin{lemma}
\label{lem:widom-local-regularity}
Let \(I\subset\Gamma\) be an open analytic boundary arc.  Suppose
\[
        F\in \mathcal{H}^{\infty}(Y,L),
        \qquad
        H\in
        \mathcal H^1\bigl(Y,K_Y\otimes L^{-1}(P_\infty)\bigr),
        \qquad
        H\not\equiv0.
\]
Assume that, for some positive real analytic function \(a\) on \(I\),
\[
        |F|=a
        \qquad\text{a.e. on \(I\)},
\]
and that the scalar boundary differential \(FH\) is non-negative on
\(I\), with respect to the Stokes orientation:
\[
        FH\ge0
        \qquad\text{a.e. on \(I\)}.
\]
Then \(F\) and \(H\) extend holomorphically across \(I\), in local
holomorphic frames.  Moreover, \(F\) has no zeros on \(I\), and every
zero of \(H\) on \(I\) has even order.
\end{lemma}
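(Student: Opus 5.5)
Near a point of \(I\) the question is purely local, so we reduce to the upper half-plane, pass to a scalar problem there, and finish with a reflection argument.

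\emph{Local reduction.} Fix a point of \(I\) and an analytic boundary chart \(z=s+it\) on a neighbourhood \(V\), with \(I\cap V=\{t=0\}\) and \(Y\cap V=\{t\ge0\}\). Over the simply connected set \(V\cap Y\), trivialise \(L\) by a flat unitary frame and \(K_Y\) by \(dz\). Then \(F\) becomes a bounded holomorphic function \(f\) on a half-disc \(D^+\), and \(H=h\,dz\) with \(h\in H^1(D^+)\). The hypotheses now read: \(|f|=a\) a.e. on the diameter, and \(fh\,ds\ge0\) there. Here the Stokes orientation of \(\Gamma\) is the direction of increasing or decreasing \(s\), and either sign choice only flips \(\ge0\) to \(\le0\).

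\emph{Removing the modulus.} Since \(a\) is positive and real analytic, \(\log a\) extends holomorphically to a neighbourhood of the arc. Let \(A\) be the holomorphic function on a smaller half-disc with \(\operatorname{Re}A=\log a\) on \(t=0\); it is the Schwarz reflection of the Poisson extension, as in Lemma~\ref{lem:analytic-widom-41}(4). Put \(\tilde f=fe^{-A}\) and \(\tilde h=he^{A}\). Then \(|\tilde f|=1\) a.e. on the diameter, \(\tilde f\tilde h\ge0\) there, \(\tilde f\in H^\infty\), and \(\tilde h\in H^1\).

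\emph{The core step.} Consider \(k:=\tilde h/\tilde f\). On the boundary \(|\tilde f|=1\), so \(k=\tilde h\overline{\tilde f}=\tilde f\tilde h\,|\tilde f|^{-2}\cdot\overline{\tilde f}^{\,2}/\ldots\); more directly, \(\tilde h=\tilde f\tilde h\cdot\overline{\tilde f}\). The main obstacle is that \(\tilde f\) may have a singular inner factor or zeros accumulating at the arc, so that \(k\) need not lie in a Hardy class. The standard route, following Read's Lemmas~4.3--4.4 cited just before the statement, works with the product \(w:=\tilde f\tilde h\). This function lies in \(H^1(D^+)\) and is real and non-negative a.e. on the diameter. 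A function in \(H^1\) of a half-disc with real boundary values on the diameter extends across it by Schwarz reflection \(w(\bar z)=\overline{w(z)}\). To justify this, apply Morera or the Painlevé theorem to the glued function, using that \(H^1\) functions are Poisson/Cauchy integrals of their boundary values; \(\operatorname{Im}w\) is the Poisson integral of data vanishing on the diameter, so it reflects oddly. Thus \(w\) is holomorphic across the arc, and its real, non-negative boundary values force every zero on the arc to have even order.

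\emph{Recovering \(f\).} The inner–outer factorisation shows that \(\tilde f\) must be analytic across the arc. Because \(|\tilde f|=1\) a.e. on the diameter, \(\tilde f\) has no outer part there, so locally \(\tilde f=B S\), with \(B\) a Blaschke product and \(S\) singular inner, up to a factor analytic across the arc. Write \(\tilde h=w/\tilde f\). This lies in \(H^1\), so \(w\) must contain the inner factor \(BS\). But \(w\) is analytic across the arc and \(w\not\equiv0\), because \(H\not\equiv0\) and \(\tilde f\not\equiv0\), the latter from \(|\tilde f|=1\). Hence \(w\) has only finitely many zeros near each point and no singular inner factor supported on the arc. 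So \(S\) is trivial, and the zeros of \(B\) cannot accumulate at the arc. Therefore \(\tilde f\) is a unimodular-boundary function that is zero-free near the arc, and \(\log\tilde f\) reflects via \(\tilde f(\bar z)=1/\overline{\tilde f(z)}\). This gives the holomorphic extension of \(\tilde f\), hence of \(F\), with \(|F|=a>0\) on \(I\), so \(F\) has no zeros on \(I\). Finally \(h=we^{-A}/\tilde f\) extends holomorphically, and its zeros on \(I\) are exactly those of \(w\), which have even order.
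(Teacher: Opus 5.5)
Your proof is correct, but it takes a different route from the paper. Both arguments start with the same normalisation to \(|f|=1\) on \(I\).

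The paper never factorises \(f\). It rewrites \(fh\ge0\) as \(\bar h=f^2h\) on \(I\). Then \(h+f^2h\) and \(h-f^2h\) are \(\mathcal H^1\) functions with real and purely imaginary boundary values respectively, and Schwarz reflection continues \(h\) and \(f^2h\) across \(I\) first. The quotient \(q=f^2h/h\) is then meromorphic near \(I\) and unimodular on \(I\), which excludes zeros and poles at the zeros of \(h\). Finally \(f\) is recovered as a holomorphic square root of \(q\).

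You reflect only the product \(w=\tilde f\tilde h\) and then use canonical factorisation on the half-disc:
\begin{itemize}
\item since \(\tilde h\in\mathcal H^1\), the inner factor of \(\tilde f\) divides that of \(w\);
\item since \(w\not\equiv0\) is analytic across the arc, its inner factor has no singular mass and no accumulating zeros there, so the same holds for \(\tilde f\);
\item the outer part of \(\tilde f\) reflects because \(\log|\tilde f|=0\) on the diameter.
\end{itemize}

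What each route buys:
\begin{itemize}
\item Yours identifies exactly what could block continuation, namely a singular inner factor or boundary-accumulating zeros of \(f\), and explains why the sign condition rules it out.
\item The price is heavier machinery. You must transplant Hardy-space factorisation to the half-disc via a Riemann map and use uniqueness of the canonical factorisation. You also rely on the standard, but here unproved, fact that a nonzero \(H^1\) function analytic across an arc has no singular inner mass on it.
\item The paper's argument is purely local. It needs only reflection for \(\mathcal H^1\) functions with real boundary values and a removable-singularity check, and it continues \(H\) before \(F\).
\end{itemize}

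Two small fixes to your write-up. The paragraph on \(k=\tilde h/\tilde f\) is an abandoned false start with a garbled formula and should be deleted. The phrase ``\(S\) is trivial'' should read ``the singular measure of \(S\) puts no mass on the open arc'', since \(S\) may still carry mass on the semicircular part of the half-disc's boundary.
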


\begin{proof}
The assertion is local.  After shrinking the boundary chart, we may
identify \(I\) with an interval on the real axis, with \(Y\) lying in
the upper half-plane, and choose a local unitary frame of \(L\) (and the dual frame for $L^{-1}$.

Since \(a\) is positive and real analytic, it has a non-vanishing
holomorphic extension \(A\) to a neighbourhood of \(I\), which we may
choose so that
\[
        A=a>0
        \qquad\text{on \(I\)}.
\]
Replacing
\[
        F\quad\text{by}\quad \widehat F:=\frac{F}{A},
        \qquad
        H\quad\text{by}\quad \widehat H:=AH,
\]
does not change the product \(FH\), and reduces the modulus condition
to
\[
        |\widehat F|=1
        \qquad\text{a.e. on \(I\)}.
\]
It is therefore enough to treat this normalized case. Write, in the chosen coordinate and frame,
\[
        \widehat F=f,
        \qquad
        \widehat H=h(z)\,dz .
\]
Then \(f\in \mathcal{H}^{\infty}\), \(h\in \mathcal{H}^1\), and the boundary relations become
\[
        |f|=1,
        \qquad
        fh\ge0
        \qquad\text{a.e. on \(I\)}.
\]
In particular, \(fh\) is real on \(I\).  Since
\(\overline f=f^{-1}\) there, we obtain
\[
        \overline h=f^2h
        \qquad\text{a.e. on \(I\)}.
\]
Consequently,
\[
        h+f^2h=h+\overline h
\]
has real boundary values on \(I\), while
\[
        h-f^2h=h-\overline h
\]
has purely imaginary boundary values.  Both functions belong locally
to \(\mathcal{H}^1\), because \(f\in \mathcal{H}^{\infty}\) and \(h\in \mathcal{H}^1\).  The
 Schwarz reflection  therefore shows
that both extend holomorphically across \(I\).  It follows that \(h\)
and \(f^2h\) extend holomorphically across \(I\).

Since \(H\not\equiv0\), the extended function \(h\) is not identically
zero.  Hence
\[
        q:=\frac{f^2h}{h}
\]
is meromorphic in a neighbourhood of \(I\) and agrees with \(f^2\) on
the original side of the boundary.  Away from the zeros of \(h\), the
function \(q\) is continuous across \(I\) and has modulus one almost
everywhere there; hence
\[
        |q|=1
\]
throughout those subarcs.  It follows that \(q\) can have neither a
pole nor a zero at a zero of \(h\) on \(I\): either would contradict
\(|q|=1\) along the punctured interval.  Thus all such singularities
are removable, and \(q\) is holomorphic and non-vanishing in a
neighbourhood of \(I\).

After shrinking the neighbourhood to be simply connected, \(q\) has
a holomorphic square root.  Choosing its sign to agree with \(f\) on
the original side gives the holomorphic continuation of \(f\) across
\(I\).  This continuation is non-vanishing because \(q=f^2\) is
non-vanishing.  Since \(h\) has already been continued, both
\(\widehat F\) and \(\widehat H\), and hence also \(F\) and \(H\),
extend holomorphically across \(I\).

Finally, the product \(fh\) extends holomorphically across \(I\), and
its restriction to \(I\) is real and non-negative.  A nonzero real
analytic function which is non-negative on an interval can have only
zeros of even order.  Since \(f\) is non-vanishing on \(I\), the zeros
of \(h\) have the same orders as the zeros of \(fh\).  Therefore every
zero of \(H\) on \(I\) has even order.
\end{proof}

\begin{lemma}
\label{lem:widom-5-3-surface}
Let
\[
        F_1\in \mathcal{H}^{\infty}(Y,L),\qquad F_1(P_\infty)=1,
\]
and let
\[
        \eta_1\in
        \mathcal H^1\bigl(Y,K_Y\otimes L^{-1}(P_\infty)\bigr),
        \qquad
        \frac1{2\pi}\int_\Gamma |\eta_1|\rho^{-1}=1.
\]
Suppose that, for some \(M>0\),
\[
        |F_1|\rho=M
        \qquad\text{\(ds\)-a.e. on \(\Gamma\)},
\]
and that
\[
        \frac1{i}\,\iota^*(F_1\eta_1)
\]
is a non-negative boundary measure. Equivalently,
\[
        \frac{\iota^*(F_1\eta_1)}
             {-\iota^*(d\mathcal G)}
        \ge0
        \qquad\text{\(ds\)-a.e. on \(\Gamma\)}.
\]
Then \(M=\mu(L,\rho)\), the function \(F_1\) is extremal, and
\(\eta_1\) attains the maximum in the dual problem. Moreover, \(F_1\)
is the unique extremal function.
\end{lemma}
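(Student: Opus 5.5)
The plan is to reverse the chain of inequalities from the proof of Lemma~\ref{lem:widom-boundary-relations}, pairing $F_1$ with $\eta_1$ through the residue theorem, and then compare with the weak-duality bound. Since $F_1\eta_1$ is a scalar meromorphic differential on $Y^\circ$ whose only pole is a possible simple pole at $P_\infty$, and $F_1(P_\infty)=1$, the residue theorem (applied on the level curves $\{g=\varepsilon\}$ and passed to the boundary through the nontangential $\mathcal H^\infty\cdot\mathcal H^1$ boundary values) gives
\[
  \operatorname{Res}_{P_\infty}\eta_1=\operatorname{Res}_{P_\infty}(F_1\eta_1)=\frac1{2\pi i}\int_\Gamma F_1\eta_1 .
\]
The positivity hypothesis says that $\frac1i\iota^*(F_1\eta_1)$ is a nonnegative measure. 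Hence this integral equals $\frac1{2\pi}\int_\Gamma|F_1||\eta_1|=\frac{M}{2\pi}\int_\Gamma|\eta_1|\rho^{-1}=M$. So $\operatorname{Res}_{P_\infty}\eta_1=M$.

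Next I use weak duality. For any admissible $F$ with $F(P_\infty)=1$ and any $\eta$ in the dual class, the same residue computation gives $|\operatorname{Res}_{P_\infty}\eta|\le\|F\|_{\rho,\infty}\|\eta\|_{\rho^{-1},1}$. Taking $\eta=\eta_1$ shows $M\le\|F\|_{\rho,\infty}$ for every competitor, so $M\le\mu(L,\rho)$. On the other hand, $\|F_1\|_{\rho,\infty}=M$, so $\mu(L,\rho)\le M$. Therefore $M=\mu(L,\rho)$ and $F_1$ is extremal. Since $\|\eta_1\|_{\rho^{-1},1}=1$ and $|\operatorname{Res}\eta_1|=M=\mu$, Theorem~\ref{thm:duality} shows that $\eta_1$ attains the supremum. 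I could also avoid Theorem~\ref{thm:duality} entirely, since weak duality already bounds the supremum by $\mu$.

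Uniqueness is the real point. Let $F_0$ be any extremal. Run the argument of Lemma~\ref{lem:widom-boundary-relations} with the pair $(F_0,\eta_1)$, which is legitimate because $\eta_1$ is a dual extremal. This forces $|F_0|\rho=\mu$ a.e. and $\frac1i\iota^*(F_0\eta_1)\ge0$, using that $|\eta_1|>0$ a.e. on $\Gamma$ (the Rudin reference). On $\Gamma$ we then have $F_0\eta_1$ and $F_1\eta_1$ both of argument $i$, with the same modulus $\mu|\eta_1|\rho^{-1}$. Hence $F_0=F_1$ a.e. on $\Gamma$. The difference $F_0-F_1$ is a bounded holomorphic section of $L$ with vanishing nontangential boundary values a.e., so it is identically zero. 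I would justify this by a Privalov/Riesz uniqueness argument in local unitary frames near a boundary arc, or by noting that $\log|F_0-F_1|$ would otherwise have an integrable majorant against harmonic measure.

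The main obstacle is the boundary passage in the residue identity for the $\mathcal H^\infty\times\mathcal H^1$ product on a bordered surface with a nontrivial bundle. The plan is to exhaust $Y^\circ$ by $Y_\varepsilon=\{g\ge\varepsilon\}$, where the product is smooth. Using the analytic collar of Lemma~\ref{lem:analytic-widom-41}, I pull the level curves back to $\Gamma$ by the gradient flow of $g$. I then invoke dominated convergence, which follows from the $\mathcal H^1$ harmonic majorant of $|\eta_1|$ (Heins, Ch.~IV) together with the boundedness of $F_1$, exactly as in Widom's treatment.
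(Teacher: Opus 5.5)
Your proposal is correct and follows the paper's proof essentially step for step: the residue identity together with the sign hypothesis gives $\operatorname{Res}_{P_\infty}\eta_1=M$, weak duality combined with $\|F_1\|_{\rho,\infty}=M$ gives $M=\mu(L,\rho)$, and uniqueness comes from applying Lemma~\ref{lem:widom-boundary-relations} to the pair $(F_0,\eta_1)$, which yields $F_0=F_1$ a.e.\ on $\Gamma$. The only difference is the final step: the paper bounds the subharmonic function $|F_0-F_1|^2$ by the harmonic-measure integral of its boundary values instead of using a Privalov/Riesz boundary-uniqueness argument, and your remarks on justifying the boundary passage in the residue theorem add detail the paper leaves implicit.
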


\begin{proof}
Since \(F_1(P_\infty)=1\), the residue theorem and the sign assumption give
\[
\begin{aligned}
        \operatorname{Res}_{P_\infty}\eta_1
        &=
        \frac1{2\pi i}\int_\Gamma F_1\eta_1
         =
        \frac1{2\pi}\int_\Gamma |F_1|\,|\eta_1|  \\
        &=
        M\,\frac1{2\pi}\int_\Gamma |\eta_1|\rho^{-1}
         =
        M.
\end{aligned}
\]
Thus the dual formulation gives \(M\le\mu(L,\rho)\). On the other hand,
\(F_1\) is admissible and
\[
        \|F_1\|_{\rho,\infty}=M,
\]
so \(\mu(L,\rho)\le M\). Consequently,
\[
        M=\mu(L,\rho),
\]
and equality is attained by both \(F_1\) and \(\eta_1\).

It remains to prove uniqueness. Let \(F_0\) be another extremal function.
Applying Lemma~\ref{lem:widom-boundary-relations} to \(F_0\) and
\(\eta_1\), we obtain
\[
        |F_0|\rho=\mu(L,\rho)
        \quad\text{-a.e. on \(\Gamma\)},\qquad
        \frac1{i}\,\iota^*(F_0\eta_1)\ge0.
\]
The same relations hold for \(F_1\). Since the two products have the same
boundary argument and
\[
        |F_0|\,|\eta_1|
        =
        \mu(L,\rho)\rho^{-1}|\eta_1|
        =
        |F_1|\,|\eta_1|,
\]
it follows that
\[
        \iota^*(F_0\eta_1)=\iota^*(F_1\eta_1).
\]
 Hence
\[
        F_0=F_1
        \qquad \text{a.e. on} \quad \Gamma .
\]
 Since \(L\) is flat unitary, \(|F_0-F_1|^2\) is a
globally defined subharmonic function on \(Y\). Hence, for every
\(p\in Y^\circ\),
\[
        |F_0(p)-F_1(p)|^2
        \le
        \int_\Gamma |F_0(q)-F_1(q)|^2\,d\omega^p(q)
        =
        0.
\]
 and therefore \(F_1=F_0\).
\end{proof}

\subsection{Counting zeros}
The critical points of the Green function are the zeros of its complex
differential \(d\mathcal G=2\partial g\). For a finitely connected planar
domain, their number is computed in
\cite[Sec.~2.1, p.~944]
{GustafssonSebbar} using Schottky double. We briefly record the corresponding index calculation
for the present surface. Near \(P_\infty\), in the fixed coordinate \(\zeta\),
\[
        g=-\log|\zeta|+h,
\]
where \(h\) is harmonic. Hence
\[
        d\mathcal G
        =
        -\frac{d\zeta}{\zeta}
        +\text{a holomorphic differential},
\]
so \(d\mathcal G\) has a simple pole at \(P_\infty\). On the other hand,
Hopf's lemma and \eqref{eq:dG-boundary-product} give
\[
        \iota^*(d\mathcal G)=-i\beta_g,
        \qquad
        \beta_g>0,
\]
and therefore \(d\mathcal G\) has no zeros on \(\Gamma\).

To count the interior zeros, choose a smooth conformal metric on \(Y\) and
a smooth function \(\kappa\), positive away from \(P_\infty\), such that
\(\kappa=|\zeta|^2\) near \(P_\infty\). The vector field
\[
        V:=\kappa\nabla g
\]
extends smoothly across \(P_\infty\). Its index there is \(1\). If \(a\) is
a zero of \(d\mathcal G\) of order \(m\), then in a local coordinate
centred at \(a\),
\[
        d\mathcal G=(cz^m+O(z^{m+1}))\,dz,
        \qquad c\ne0.
\]
Since the complex direction of \(\nabla g\) is a positive multiple of the
conjugate of the coefficient of \(d\mathcal G\), the index of \(V\) at
\(a\) is \(-m\). Finally, \(V\) points inward on \(\Gamma\), by Hopf's
lemma. Applying the Poincaré--Hopf theorem to \(-V\)
\cite[p.~35]{M97}, and observing that multiplication by \(-1\) does not
change the index of a vector field on an oriented surface, gives
\[
        1-\deg(d\mathcal G)_0
        =
        \chi(Y)
        =
        2-2g-p.
\]
Thus
\begin{equation}
\label{eq:green-differential-zero-count}
        \deg(d\mathcal G)_0=2g+p-1.
\end{equation}

Assume now that \(\Gamma\) and \(\rho\) are real analytic, so that
Lemma~\ref{lem:widom-local-regularity} applies to the extremal pair
\(F_0,\eta_0\). We compare the zeros of \(F_0\eta_0\) with those of
\(d\mathcal G\) by considering
\[
        Q:=\frac{F_0\eta_0}{d\mathcal G}.
\]
Both differentials have a simple pole at \(P_\infty\), and
\[
        \operatorname{Res}_{P_\infty}(F_0\eta_0)=\mu(L,\rho),
        \qquad
        \operatorname{Res}_{P_\infty}d\mathcal G=-1.
\]
The singularity of \(Q\) at \(P_\infty\) is therefore removable, with
\[
        Q(P_\infty)=-\mu(L,\rho)\ne0.
\]

Lemma \ref{lem:widom-local-regularity} shows that \(F_0\) and \(\eta_0\) extend
holomorphically across \(\Gamma\). Since neither \(F_0\) nor
\(d\mathcal G\) vanishes there, \(Q\) also extends across \(\Gamma\), and
its boundary zeros are precisely those of \(\eta_0\); in particular, they
have even order. Moreover, the boundary relation gives
\[
        Q
        =
        \frac{\iota^*(F_0\eta_0)}
             {\iota^*(d\mathcal G)}
        \le0
        \qquad\text{on \(\Gamma\)}.
\]
Thus \(Q\) has constant boundary argument away from its zeros, and the
even multiplicity allows the same branch of the argument to continue
through each boundary zero. The argument principle, applied after making
a small semicircular indentation at every such zero, gives
\[
        \deg\operatorname{div}_0(Q)=\deg\operatorname{div}_\infty(Q),
\]
where a boundary zero is counted with half its analytic multiplicity.
Since
\[
       \operatorname{div} (Q)=\operatorname{div}\bigl(F_0\eta_0\bigr)-\operatorname{div}(d\mathcal G)
\]
and the two differentials have the same pole divisor, it follows that
\[
        \deg\operatorname{div}_0\bigl(F_0\eta_0\bigr)
        =
        \deg\operatorname{div}_0(d\mathcal G).
\]
Together with \eqref{eq:green-differential-zero-count}, this yields
\begin{equation}
\label{eq:extremal-product-zero-count}
        \deg  \operatorname{div}_0\bigl(F_0\eta_0\bigr)
        =
        \deg\operatorname{div}_0(d\mathcal G)
        =
        2g+p-1,
\end{equation}
with boundary zeros counted with half their analytic multiplicity. 
Let
\[
A:=\operatorname{div}_0\bigl(F_0\eta_0\bigr),
        \qquad
        B:=\operatorname{div}_0(d\mathcal G).
\]
By the preceding zero count, these are effective divisors of the same
degree,
\[
        \deg A=\deg B=2g+p-1,
\]
where a boundary zero is counted with half its analytic multiplicity. Write
\[
        A=a_1+\cdots+a_{\widehat g},
        \qquad
        B=b_1+\cdots+b_{\widehat g},
        \qquad
        \widehat g:=2g+p-1.
\]
Thus a boundary zero of analytic order \(2m\) occurs \(m\) times among
the points \(a_j\).

Recall our convention for the Neumann factor \eqref{eq:expneu}
\[
        \Psi_{a,b}(p)
        :=
        \exp\!\bigl(
              N_Y(p;a,b)+i\widetilde N_Y(p;a,b)
        \bigr),
\]
where \(\widetilde N_Y(\cdot;a,b)\) is a local harmonic conjugate of
\(N_Y(\cdot;a,b)\). Since \(N_Y(\cdot;a,b)\) has a negative logarithmic
singularity at \(a\) and a positive logarithmic singularity at \(b\),
\(\Psi_{a,b}\) has a pole at \(a\) and a zero at \(b\); hence
\[
        \operatorname{div}\Psi_{a,b}=b-a.
\]
Its character is unitary, and its argument is constant on each component
of \(\Gamma\), because \(N_Y(\cdot;a,b)\) has vanishing normal derivative
there. If \(a\) or \(b\) lies on \(\Gamma\), the factor is understood in
the continuous boundary sense described above. By \eqref{eq:dz} analytic continuation
then has a double pole or zero at that point, which contributes one to
the divisor under the half-multiplicity convention.

Set
\[
        Q:=\frac{F_0\eta_0}{d\mathcal G}.
\]
Since
\[
        \operatorname{div}Q=A-B
\]
and
\[
        \operatorname{div}\Psi_{b_j,a_j}=a_j-b_j,
\]
the section
\begin{equation}
\label{eq:zero-free-neumann-product}
        U(p)
        :=
        Q(p)
        \left(
             \prod_{j=1}^{\widehat g}
             \Psi_{b_j,a_j}(p)
        \right)^{-1}
\end{equation}
has no zeros or poles. The local continuations established above show
that the cancellations also hold at boundary zeros, so \(U\) is smooth
and non-vanishing up to \(\Gamma\).

The boundary relation for the extremal pair shows that \(Q\) has constant
argument on every component of \(\Gamma\), and the same is true of each
Neumann factor. Consequently every local branch of \(U\) has constant
boundary argument on each \(\Gamma_k\).

It remains to show that \(U\) is constant. Let \(\widetilde U\) be its
lift to the universal cover, with character \(\chi_U\):
\[
        \widetilde U(\gamma p)
        =
        \chi_U(\gamma)\widetilde U(p),
        \qquad
        |\chi_U(\gamma)|=1.
\]
The function
\[
        u:=\log|\widetilde U|
\]
is therefore single-valued on \(Y\). Since \(U\) has no zeros or poles,
\(u\) is harmonic. On a boundary arc, choose a local logarithm
\[
        \log\widetilde U=u+iv.
\]
The boundary argument is constant, so the tangential derivative of \(v\)
vanishes. The Cauchy--Riemann equations then give
\[
        \frac{\partial u}{\partial n_{\rm out}}=0
        \qquad\text{on \(\Gamma\)}.
\]
Green's identity yields
\[
        \int_Y |du|^2
        =
        \int_\Gamma
        u\,\frac{\partial u}{\partial n_{\rm out}}\,ds
        =
        0.
\]
Thus \(u\) is constant. Hence \(\widetilde U\) has constant modulus, and
the open mapping theorem shows that \(\widetilde U\) itself is constant.
Therefore \(U\) has trivial character and is constant. We have proved
the product formula
\begin{equation}
\label{eq:widom-neumann-product}
        \frac{F_0\eta_0}{d\mathcal G}
        =
        C
        \prod_{j=1}^{\widehat g}
        \Psi_{b_j,a_j},
        \qquad
        C\in\mathbb C^\ast.
\end{equation}
\subsection{Proof of Theorem \ref{thm:extremal}}\label{ss:extremal}

We complete the proof by recovering the extremal section from its zeros. 
\begin{proof}

Let \(F_0\) be the normalized extremal in the character class \(\chi\), and write its zero divisor as
\[
        D_0:=\operatorname{div}_0(F_0)=z_1+\cdots+z_q,
\]
where the zeros are repeated according to multiplicity. By Lemma~\ref{lem:widom-local-regularity}, \(F_0\) has no zeros on \(\Gamma\), so \(D_0\) is supported in \(Y^\circ\). Moreover, every zero of \(F_0\) is a zero of \(F_0\eta_0\) of at least the same order. \eqref{eq:extremal-product-zero-count} therefore gives
\[
        q=\deg D_0\le \widehat g=2g+p-1.
\]

Normalize each Green factor \(\Phi(\,\cdot\,,z_j)\) so that
$ \Phi(P_\infty,z_j)>0,$
and set
\[
        \Phi(\,\cdot\,,D_0)
        :=\prod_{j=1}^{q}\Phi(\,\cdot\,,z_j).
\]
Recall that \(\Phi(\,\cdot\,,z_j)\) has a simple pole at \(z_j\); hence
$
        \operatorname{div}\Phi(\,\cdot\,,D_0)^{-1}=D_0.$
It follows that
\[
        S
        :=
        \frac{\mu(\rho,\chi)}{F_0}\,
        \Phi(\,\cdot\,,D_0)^{-1}
\]
is a nowhere-vanishing multiplicative holomorphic function on \(Y\): at every point of \(D_0\), the zero of \(\Phi(\,\cdot\,,D_0)^{-1}\) cancels the pole of \(F_0^{-1}\) with precisely the same multiplicity. On the boundary, using Lemma \ref{lem:widom-boundary-relations}
we obtain
\[
        |S|=\rho=|R|.
\]
The boundary relations, initially valid almost everywhere, hold everywhere here by the boundary regularity already proved.

The quotient \(S/R\) is therefore nowhere zero and multiplicative, with unitary character and boundary modulus one. Consequently,
\[
        \log\left|\frac{S}{R}\right|
\]
is a single-valued harmonic function on \(Y\) which vanishes on \(\Gamma\). The maximum principle gives
\[
        \left|\frac{S}{R}\right|\equiv1.
\]
A lift of \(S/R\) to the universal covering surface is thus a holomorphic function of constant modulus, and hence is constant by the open mapping theorem. Since both \(S(P_\infty)\) and \(R(P_\infty)\) are positive, this constant is \(1\). Therefore \(S=R\), and
\[
        F_0(p)
        =
        \mu(\rho,\chi)\,R(p)^{-1}
        \Phi(p,D_0)^{-1}
        =
        \mu(\rho,\chi)\,R(p)^{-1}
        \prod_{j=1}^{q}\Phi(p,z_j)^{-1}.
\]

Finally, evaluating at \(P_\infty\) and using \(F_0(P_\infty)=1\), we find
\[
        \mu(\rho,\chi)
        =
        R(P_\infty)\Phi(P_\infty,D_0)
        =
        R(P_\infty)\prod_{j=1}^{q}\Phi(P_\infty,z_j).
\]
Since the Green factors have been normalized to be positive at \(P_\infty\), symmetry of the Dirichlet Green function gives
\[
        \Phi(P_\infty,z_j)
        =
        \exp g_E(P_\infty,z_j)
        =
        \exp g_E(z_j,P_\infty).
\]
Hence
$$\mu(\rho,\chi)
        =
        R(P_\infty)
        \exp\left(\sum_{j=1}^{q}g_E(z_j)\right).$$
\end{proof}
\subsection{Proof of Theorem~\ref{thm:range}}\label{ss:range}
\begin{proof}
 \eqref{eq:nvlb} gives a local family of normalized sections \(\psi_\chi\), depending
continuously on \(\chi\), with  $\psi_\chi(P_\infty)=1.$
Put
\[
        q_\chi:=\frac{\psi_\chi}{\psi_{\chi_0}}.
\]
Then \(q_\chi\) is a nowhere-vanishing multiplicative holomorphic function
of character \(\chi-\chi_0\), normalized by \(q_\chi(P_\infty)=1\).
Since the construction is made using holomorphic differentials on a fixed
neighbourhood of \(Y\), one has
\[
        |q_\chi|\longrightarrow 1
        \qquad\text{uniformly on }Y
        \quad\text{as }\chi\longrightarrow\chi_0.
\]
Let
\[
        m_\chi:=\min_{\Gamma}|q_\chi|,
        \qquad
        M_\chi:=\max_{\Gamma}|q_\chi|.
\]
Multiplication by \(q_\chi\) sends a normalized section of \(L_{\chi_0}\)
to a normalized section of \(L_\chi\), while multiplication by
\(q_\chi^{-1}\) gives the inverse correspondence.  Testing the two extremal
problems against these sections gives
\begin{equation}\label{eq:range-continuity-comparison}
        m_\chi\,\mu(\rho,\chi_0)
        \le
        \mu(\rho,\chi)
        \le
        M_\chi\,\mu(\rho,\chi_0).
\end{equation}
Since \(m_\chi,M_\chi\to1\), the function
\[
        \chi\longmapsto\mu(\rho,\chi)
\]
is continuous.  The character group
\[
        \mathfrak X(Y)\simeq\mathbb T^{\widehat g},
        \qquad
        \widehat g=2g+p-1,
\]
is compact and connected; consequently, the range of
\(\mu(\rho,\chi)\) is a compact interval.

We next determine its lower endpoint.  Let
\(F\in \mathcal{H}^{\infty}(Y,L_\chi)\) satisfy \(F(P_\infty)=1\).  Since the
characters of \(F\) and \(R_\rho\) are unitary, the modulus of
\(FR_\rho\) is single-valued and subharmonic on \(Y\).  Its boundary
values satisfy
\[
        |FR_\rho|
        =
        |F|\rho
        \le
        \|F\|_{\rho,\infty}
        \qquad\text{\(ds\)-a.e. on }\Gamma.
\]
The maximum principle therefore gives
\[
        R_\rho(P_\infty)
        =
        |F(P_\infty)R_\rho(P_\infty)|
        \le
        \|F\|_{\rho,\infty}.
\]
Taking the infimum over all such \(F\), we obtain
\begin{equation}\label{eq:range-lower-bound}
        \mu(\rho,\chi)\ge R_\rho(P_\infty)
        \qquad
        \text{for every }\chi\in\mathfrak X(Y).
\end{equation}
For
\[
        \chi_-:=-\gamma_{R_\rho},
        \qquad
        F_-:=R_\rho(P_\infty)R_\rho^{-1},
\]
the section \(F_-\) has character \(\chi_-\), satisfies
\(F_-(P_\infty)=1\), and
\[
        |F_-|\rho=R_\rho(P_\infty)
        \qquad\text{on }\Gamma.
\]
Thus
\begin{equation}\label{eq:range-minimum}
        \min_{\chi\in\mathfrak X(Y)}\mu(\rho,\chi)
        =
        \mu(\rho,\chi_-)
        =
        R_\rho(P_\infty).
\end{equation}

We now turn to the upper endpoint.  Recall that
\[
        B:=\operatorname{div}_0(d\mathcal G)
        =
        b_1+\cdots+b_{\widehat g},
\]
where the zeros are repeated according to multiplicity, and normalize  so that
\[
        \Phi(P_\infty,b_\ell)>0,
        \qquad
        1\le \ell\le\widehat g.
\]
Set
\[
        \Phi_B
        :=
        \Phi(\,\cdot\,,B)
        =
        \prod_{\ell=1}^{\widehat g}\Phi(\,\cdot\,,b_\ell),
        \qquad
        \gamma_B:=\operatorname{char}\Phi_B
        =
        \sum_{\ell=1}^{\widehat g}\gamma_{b_\ell}.
\]

Fix \(\chi\in\mathfrak X(Y)\), and let \(F_0,\eta_0\) be the extremal
pair furnished by Theorem~\ref{thm:extremal}, with
\[
        \frac1{2\pi}\int_\Gamma|\eta_0|\rho^{-1}=1,
        \qquad
        \operatorname{Res}_{P_\infty}\eta_0
        =
        \mu(\rho,\chi)>0.
\]
Consider the multiplicative section
\begin{equation}\label{eq:range-H-definition}
        H_\chi
        :=
        \frac{\eta_0}{d\mathcal G}\,
        R_\rho^{-1}\Phi_B^{-1}.
\end{equation}
At a zero \(b_\ell\) of \(d\mathcal G\), the corresponding zero of
\(\Phi_B^{-1}\) cancels the pole of \(1/d\mathcal G\), with the same
multiplicity.  At \(P_\infty\), the simple poles of \(\eta_0\) and
\(d\mathcal G\) cancel.  Hence \(H_\chi\) is holomorphic on \(Y^\circ\),
including at \(P_\infty\), as a section of a flat unitary line bundle.
Moreover, Lemma~\ref{lem:widom-local-regularity} shows that
\(\eta_0/d\mathcal G\), and therefore \(H_\chi\), extends continuously
across \(\Gamma\).

On the boundary,
\[
        |R_\rho|=\rho,
        \qquad
        |\Phi_B|=1,
        \qquad
        |\iota^*(d\mathcal G)|=\beta_g.
\]
Thus, as an identity of positive boundary measures,
\begin{equation}\label{eq:range-H-boundary-measure}
        |H_\chi|\,\beta_g
        =
        |\eta_0|\rho^{-1}
        \qquad\text{on }\Gamma.
\end{equation}
Let \(u_\chi\) be the harmonic function on \(Y\) with boundary values
\(|H_\chi|\).  Since \(|H_\chi|\) is subharmonic, the subharmonic
maximum principle gives
\[
        |H_\chi|\le u_\chi
        \qquad\text{on }Y.
\]
The boundary form \(\beta_g/(2\pi)\) is harmonic measure at
\(P_\infty\).  Using \eqref{eq:range-H-boundary-measure} and the
normalization of \(\eta_0\), we obtain
\begin{equation}\label{eq:range-H-majorant}
\begin{aligned}
        u_\chi(P_\infty)
        &=
        \frac1{2\pi}\int_\Gamma |H_\chi|\,\beta_g  \\
        &=
        \frac1{2\pi}\int_\Gamma|\eta_0|\rho^{-1}
        =
        1.
\end{aligned}
\end{equation}
Consequently,
\[
        |H_\chi(P_\infty)|\le1.
\]

In the fixed local coordinate \(\zeta\) at \(P_\infty\), the residue
normalizations give
\[
        \eta_0
        =
        \left(
        \frac{\mu(\rho,\chi)}{\zeta}+O(1)
        \right)d\zeta,
        \qquad
        d\mathcal G
        =
        \left(
        -\frac1{\zeta}+O(1)
        \right)d\zeta.
\]
It follows from \eqref{eq:range-H-definition} that
\begin{equation}\label{eq:range-H-value}
        H_\chi(P_\infty)
        =
        -\frac{\mu(\rho,\chi)}
        {R_\rho(P_\infty)\Phi_B(P_\infty)}.
\end{equation}
Combining this with \(|H_\chi(P_\infty)|\le1\) yields
\begin{equation}\label{eq:range-upper-bound}
        \mu(\rho,\chi)
        \le
        R_\rho(P_\infty)\Phi_B(P_\infty)
        =
        R_\rho(P_\infty)
        \prod_{\ell=1}^{\widehat g}\Phi(P_\infty,b_\ell).
\end{equation}

It remains to show that equality is attained.  Put
\begin{equation}\label{eq:range-upper-character}
        \chi_*:=-\gamma_{R_\rho}-\gamma_B,
        \qquad
        M_*:=R_\rho(P_\infty)\Phi_B(P_\infty),
\end{equation}
and define
\begin{equation}\label{eq:range-upper-candidates}
        F_*:=M_*R_\rho^{-1}\Phi_B^{-1},
        \qquad
        \eta_*:=-R_\rho\Phi_B\,d\mathcal G.
\end{equation}
The section \(F_*\) has character
\[
        -\gamma_{R_\rho}-\gamma_B=\chi_*,
\]
and, by the definition of \(M_*\),
\[
        F_*(P_\infty)=1,
        \qquad
        |F_*|\rho=M_*
        \quad\text{on }\Gamma.
\]
The differential \(\eta_*\) has character
\[
        \gamma_{R_\rho}+\gamma_B=-\chi_*,
\]
so it takes values in \(K_Y\otimes L_{\chi_*}^{-1}(P_\infty)\).  At each
\(b_\ell\), the pole of \(\Phi_B\) is cancelled by the corresponding zero
of \(d\mathcal G\); the only remaining pole is the simple pole at
\(P_\infty\).  Thus
\[
        \eta_*
        \in
        \mathcal H^1
        \bigl(Y,K_Y\otimes L_{\chi_*}^{-1}(P_\infty)\bigr).
\]
On \(\Gamma\), using \(|R_\rho|=\rho\), \(|\Phi_B|=1\), and
\(|\iota^*(d\mathcal G)|=\beta_g\), we find
\begin{equation}\label{eq:range-dual-normalization}
        \frac1{2\pi}\int_\Gamma|\eta_*|\rho^{-1}
        =
        \frac1{2\pi}\int_\Gamma\beta_g
        =
        1.
\end{equation}
Since \(\operatorname{Res}_{P_\infty}d\mathcal G=-1\), the positivity
normalizations of \(R_\rho\) and \(\Phi_B\) give
\begin{equation}\label{eq:range-dual-residue}
        \operatorname{Res}_{P_\infty}\eta_*
        =
        R_\rho(P_\infty)\Phi_B(P_\infty)
        =
        M_*.
\end{equation}
Finally,
\[
        F_*\eta_*=-M_*\,d\mathcal G.
\]
Using \(\iota^*(d\mathcal G)=-i\beta_g\), we obtain the boundary sign
\begin{equation}\label{eq:range-boundary-certificate}
        \frac1{i}\,\iota^*(F_*\eta_*)
        =
        M_*\beta_g
        \ge0.
\end{equation}
Equations \eqref{eq:range-upper-candidates}--\eqref{eq:range-boundary-certificate}
show that \(F_*\) and \(\eta_*\) satisfy the converse boundary criterion,
Lemma~\ref{lem:widom-5-3-surface}.  Hence \(F_*\) is extremal in the
character class \(\chi_*\), and
\begin{equation}\label{eq:range-maximum}
        \max_{\chi\in\mathfrak X(Y)}\mu(\rho,\chi)
        =
        \mu(\rho,\chi_*)
        =
        M_*
        =
        R_\rho(P_\infty)\Phi_B(P_\infty).
\end{equation}

The  continuous image of the compact connected torus
\(\mathfrak X(Y)\) is a  compact interval.  By
\eqref{eq:range-minimum} and \eqref{eq:range-maximum}, its endpoints are
precisely
\[
        R_\rho(P_\infty)
        \quad\text{and}\quad
        R_\rho(P_\infty)\Phi_B(P_\infty).
\]
Finally, by the normalization of the Green factors and the symmetry of the
Dirichlet Green function,
\begin{equation}\label{eq:range-green-product}
\begin{aligned}
        \Phi_B(P_\infty)
        &=
        \prod_{\ell=1}^{\widehat g}\Phi(P_\infty,b_\ell) \\
        &=
        \exp\left(
        \sum_{\ell=1}^{\widehat g}g_E(P_\infty,b_\ell)
        \right)
        =
        \exp\left(
        \sum_{\ell=1}^{\widehat g}g_E(b_\ell)
        \right).
\end{aligned}
\end{equation}
This is exactly the interval stated in \eqref{eq:range}, and proves
Theorem~\ref{thm:range}.  
\end{proof}

\section{Szeg\H{o}--Widom asymptotics}
\label{sec:widom-asymptotics}

We retain the notation of Theorem~\ref{thm:sw} and put
\(D_n:=D_{\rho,\chi_n}\). By Theorem~\ref{thm:extremal},
\begin{equation}\label{eq:sw-extremal-product}
        F_n=\mu_nR_\rho^{-1}\Phi_{D_n}^{-1},
        \qquad
        \deg D_n\le\widehat g .
\end{equation}
Theorem~\ref{thm:range} gives, uniformly in \(n\),
\begin{equation}\label{eq:sw-mu-bounds}
        R_\rho(P_\infty)
        \le\mu_n
        \le R_\rho(P_\infty)\Phi_B(P_\infty).
\end{equation}
Only the local construction \eqref{eq:nvlb}, applied near the trivial
character, will be needed when zeros approaching the boundary are removed.

The following proposition is the analogue on \(X\) of Widom's analytic projection
\cite[Lemma~8.2, pp.~180--184]{Widom69}. The analytic collar used here permits
a fixed displacement of the contour and hence gives an error which is
exponentially small relative to the natural scale \(\capc(E)^n\).

\begin{proposition}
\label{lem:analytic-projection}
Let \(H_n\in \mathcal{H}^{\infty}(Y,L_E^{-n})\), with \(H_n(P_\infty)=1\), and suppose
that the \(H_n\) extend holomorphically through a fixed analytic collar of
\(\Gamma\), where they are uniformly bounded. Then there is a number
\(0<r<1\), independent of \(n\), such that, for all sufficiently large
\(n\), there is an admissible function
\[
        Q_n\in H^0\bigl(X,\mathcal O(nP_\infty)\bigr)
\]
with the following property: for every compact set
\(K\Subset Y^\circ\setminus\{P_\infty\}\), there is a constant \(C_K>0\)
such that
\begin{equation}\label{eq:sw-projection}
        \sup_{\Gamma\cup K}
        \left|Q_n-\capc(E)^n\Phi_E^nH_n\right|
        \le C_K\capc(E)^n r^n .
\end{equation}
Moreover,
\[
        Q_n(p)=\zeta(p)^{-n}+O\bigl(\zeta(p)^{-n+1}\bigr),
        \qquad p\to P_\infty .
\]
\end{proposition}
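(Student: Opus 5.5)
Write $S_n:=\capc(E)^n\Phi_E^nH_n$. This is a section of the trivial bundle, since the characters $n\gamma_E$ and $-n\gamma_E$ cancel. It is therefore a single-valued holomorphic function on $Y^\circ\setminus\{P_\infty\}$, with a pole of order $n$ at $P_\infty$ whose leading coefficient is $1$ by \eqref{eq:PhiE} and $H_n(P_\infty)=1$. It continues holomorphically into the reflected side of the collar given by Lemma~\ref{lem:analytic-widom-41}. Fix $\varepsilon>0$ so that the level curve $\Gamma_-=\{g_E=-\varepsilon\}$ lies inside the collar. On $\Gamma_-$ we have $|\Phi_E|=e^{-\varepsilon}$, so $|S_n|\le C\capc(E)^ne^{-n\varepsilon}$ uniformly in $n$. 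Let $Y_-$ be the enlarged surface bounded by $\Gamma_-$; the complementary discs $\Omega_j^-\subset E^\circ$ are the remaining pieces of $X$.

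\textbf{Step 1: Cauchy projection.} Using the kernel $C_{\mathcal D}$ of \eqref{eq:CD1}, with $\mathcal D$ placed in $X\setminus\overline{Y_-}$, i.e.\ inside the discs, define for $p\in Y_-^\circ$
\[
        \widetilde Q_n(p):=S_n(p)+\frac{1}{2\pi i}\int_{\Gamma_-}C_{\mathcal D}(q,p)\,S_n(q).
\]
The sign is chosen so that, by the residue theorem in $q$ on $Y_-$ (poles at $q=p$ with residue $1$ and at $q=P_\infty$), the integral equals $-S_n(p)$ plus a term coming from the pole of $S_n$ at $P_\infty$. The same integral, viewed from the disc side, gives a function of $p$ that is meromorphic on all of $X\setminus\Gamma_-$ and agrees across $\Gamma_-$ (the usual Plemelj jump relation). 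We expect $\widetilde Q_n$ to be meromorphic on $X$ with poles only at $P_\infty$ (order $\le n$) and at $\mathcal D$. Its principal part at $P_\infty$ is that of $S_n$, because the $q$-residue at $P_\infty$ only contributes lower order terms, the $p$-divisor bound in \eqref{eq:CD2} giving at most $\mathcal D-P_\infty$-type behaviour. Moreover, for $p\in\Gamma\cup K$, which is at positive distance from $\Gamma_-$,
\[
        |\widetilde Q_n-S_n|\le C_K\,\capc(E)^ne^{-n\varepsilon},
\]
because $C_{\mathcal D}(q,p)$ is bounded for $q\in\Gamma_-$ and $p\in\Gamma\cup K$. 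Here $K$ must avoid $P_\infty$, which is why $P_\infty$ is excluded from $K$.

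\textbf{Step 2: removing the poles at $\mathcal D$. This is the main obstacle.} The residues of $\widetilde Q_n$ at $d_1,\dots,d_g$ are $\frac1{2\pi i}\int_{\Gamma_-}(\operatorname{Res}_{p=d_k}C_{\mathcal D}(q,p))\,S_n(q)$. Each of these is $O(\capc(E)^ne^{-n\varepsilon})$, since the integrand is a fixed holomorphic differential in $q$ times $S_n$. We would subtract a combination $\sum_k c_{n,k}h_k$, where $h_k\in L(\mathcal D+(2g)P_\infty)$ is a fixed function with a simple pole at $d_k$, pole order $<n$ at $P_\infty$, and no other poles. Such functions exist by Riemann--Roch: $\mathcal D$ is nonspecial, and the dimension jump as one adds $d_k$ is $1$ once $2g$ is large enough. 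The coefficients $c_{n,k}$ are fixed by the residues and hence are $O(\capc(E)^ne^{-n\varepsilon})$. The corrected function $Q_n:=\widetilde Q_n-\sum_kc_{n,k}h_k$ lies in $L(nP_\infty)$ for $n>2g$ and keeps leading coefficient $1$. Since the $h_k$ are uniformly bounded on $\Gamma\cup K$, the error estimate survives with $r:=e^{-\varepsilon}$.

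The delicate point is to check that $\operatorname{div}_p C_{\mathcal D}\ge-\mathcal D-q+P_\infty$ really forces at most simple poles at the $d_k$. One must also check that the residue at $q=P_\infty$ contributes no new pole of order $n$. If $C_{\mathcal D}$ vanishes at $p=P_\infty$, this holds, and the leading coefficient $\zeta^{-n}$ comes solely from $S_n$.
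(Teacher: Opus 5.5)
Your proposal is correct and is essentially the paper's proof. The paper defines $Q_n^0(p):=-\operatorname{Res}_{q=P_\infty}S_n(q)C_{\mathcal D}(q,p)$, which by the residue theorem is your $\widetilde Q_n$, so meromorphy on $X$ is immediate without Plemelj gluing. It then bounds $S_n-Q_n^0$ and the coefficients at the $d_j$ by $\sup_{\Gamma_-}|S_n|\le C\capc(E)^nr^n$, subtracts fixed Riemann--Roch functions with simple poles at the $d_j$, and the two points you flag as delicate follow directly from \eqref{eq:CD2} with distinct $d_j$: at most simple poles at each $d_j$, and vanishing at $p=P_\infty$ for $q\in\Gamma_-$, so the leading coefficient comes from $S_n$.
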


\begin{proof}
By Lemma~\ref{lem:analytic-widom-41}, \(g_E\) and \(\Phi_E\) extend through
an analytic collar of \(\Gamma\). We may assume that all the \(H_n\) are defined and uniformly
bounded there. After shrinking it once more, Hopf's lemma gives
\(dg_E\ne0\) throughout the collar. For sufficiently small
\(\varepsilon>0\), let \(\Gamma_-\) be the level curve
\(\{g_E=-\varepsilon\}\) on the other side of \(\Gamma\), and let
\(Y_-\) be the bordered surface bounded by \(\Gamma_-\) which contains
\(Y\). Then
\begin{equation}\label{eq:sw-inner-contour}
        |\Phi_E|=e^{-\varepsilon}=:r<1
        \qquad\text{on }\Gamma_- .
\end{equation}

Set \(S_n:=\capc(E)^n\Phi_E^nH_n\). The characters of \(\Phi_E^n\) and
\(H_n\) cancel, so \(S_n\) is a single-valued meromorphic function on
\(Y_-\), with its only pole at \(P_\infty\). The normalizations at
\(P_\infty\) give
\[
        S_n(p)=\zeta(p)^{-n}+O\bigl(\zeta(p)^{-n+1}\bigr),
        \qquad p\to P_\infty,
\]
while the collar bound and \eqref{eq:sw-inner-contour} give
\begin{equation}\label{eq:sw-source-bound}
        \sup_{\Gamma_-}|S_n|\le C\capc(E)^n r^n .
\end{equation}

Following the residue construction associated with this kernel introduced in \eqref{eq:CD1}, 
(compare
\cite[Prop.~2.4]{Bertola2021}), define
\[
        Q_n^0(p):=-\operatorname{Res}_{q=P_\infty}
        S_n(q)C_{\mathcal D}(q,p).
\]
For \(p\in Y_-\setminus\{P_\infty\}\), the residue theorem in the
\(q\)-variable gives
\begin{equation}\label{eq:sw-cauchy-identity}
        S_n(p)-Q_n^0(p)
        =\frac{1}{2\pi i}\int_{\Gamma_-}
        S_n(q)C_{\mathcal D}(q,p).
\end{equation}
The right-hand side is holomorphic in \(p\) near \(P_\infty\); hence
\(Q_n^0\) has the same principal part there as \(S_n\). Moreover, for
every compact set \(K\Subset Y_-\setminus\{P_\infty\}\),
\begin{equation}\label{eq:sw-raw-projection}
        \sup_K|S_n-Q_n^0|
        \le C_K\sup_{\Gamma_-}|S_n|.
\end{equation}
The only other possible poles of \(Q_n^0\) are the fixed points
\(d_1,\ldots,d_g\).

It remains to remove these auxiliary poles. Fix, once and for all, a local coordinate \(t_j\) centered at each
\(d_j\). Near \(p=d_j\)
we may write
\[
        C_{\mathcal D}(q,p)=\frac{b_j(q)}{t_j(p)}+O(1).
\]
Thus the coefficient of \(t_j^{-1}\) in the principal part of \(Q_n^0\) is
\[
        a_{j,n}:=-\operatorname{Res}_{q=P_\infty}S_n(q)b_j(q).
\]
The divisor properties of the kernel show that \(b_j\) has no pole in
\(Y_-\setminus\{P_\infty\}\). The residue theorem and
\eqref{eq:sw-source-bound} therefore give
\begin{equation}\label{eq:sw-auxiliary-coefficients}
        |a_{j,n}|\le C_j\sup_{\Gamma_-}|S_n|
        \le C_j\capc(E)^n r^n .
\end{equation}

What we do next is known as the Mittag-Leffler problem \cite[\S18]{Forster}.  For
\[
        f\in L(n_0P_\infty+\mathcal D),
\]
write
\[
        f(p)=\frac{c_j(f)}{t_j(p)}+O(1),
        \qquad p\to d_j.
\]
Consider the linear map
\[
        \operatorname{pp}_{\mathcal D}:
        L(n_0P_\infty+\mathcal D)
        \longrightarrow \mathbb C^g,
        \qquad
        f\longmapsto
        \bigl(c_1(f),\ldots,c_g(f)\bigr).
\]
Its kernel is \(L(n_0P_\infty)\), since all the coefficients
\(c_j(f)\) vanish precisely when \(f\) is holomorphic at every \(d_j\).
Riemann-Roch gives
\[
        \dim L(n_0P_\infty+\mathcal D)=n_0+1,
        \qquad
        \dim L(n_0P_\infty)=n_0+1-g.
\]
Hence, by rank--nullity,
\[
        \dim\operatorname{im}(\operatorname{pp}_{\mathcal D})
        =(n_0+1)-(n_0+1-g)=g.
\]
Since   \(\mathbb C^g\) also has dimension \(g\), the map is
onto.  We may therefore choose fixed functions \(r_1,\ldots,r_g\) such
that
\[
        r_j(p)=\frac{\delta_{jk}}{t_k(p)}+O(1),
        \qquad p\to d_k,
\]
and such that each \(r_j\) has pole order at most \(n_0\) at
\(P_\infty\).
Define
\[
        \Delta_n:=\sum_{j=1}^g a_{j,n}r_j,
        \qquad
        Q_n:=Q_n^0-\Delta_n .
\]
Then \(Q_n\) has no pole at any \(d_j\), and, for \(n>n_0\),
\(Q_n\in H^0(X,\mathcal O(nP_\infty))\). Since \(\Delta_n\) has pole
order strictly smaller than \(n\) at \(P_\infty\), it does not change the
coefficient of \(\zeta^{-n}\). Finally,
\eqref{eq:sw-auxiliary-coefficients} gives
\[
        \sup_K|\Delta_n|\le C_K\sup_{\Gamma_-}|S_n|.
\]
Apply this estimate and \eqref{eq:sw-raw-projection} to the compact set
\(\Gamma\cup K\), and use \eqref{eq:sw-source-bound}; this proves
\eqref{eq:sw-projection} and the required normalization at \(P_\infty\).
\end{proof}

We shall next remove from \(D_n\) the zeros which approach \(\Gamma\). For
a planar domain, the character of a Green factor is expressed in
harmonic-measure coordinates in \cite[(4.3), pp.~140--141]{Widom69}.
Widom uses the resulting small characters to delete zeros near the boundary
and restore the prescribed character in
\cite[(8.7), pp.~185--186]{Widom69}. The corresponding fact on \(Y\) is as
follows.

\begin{lemma}
\label{lem:sw-near-character}
As \(a\to\Gamma\),
\[
        g_E(a)\longrightarrow0,
        \qquad
        \gamma_a\longrightarrow0
        \quad\text{in }\mathfrak X(Y),
\]
uniformly as \(a\) approaches \(\Gamma\).
\end{lemma}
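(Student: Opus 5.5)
We prove the two assertions separately. The first concerns only the Green function; the second reduces to computing the periods of the harmonic conjugate $\widetilde g_E(\cdot,a)$ along a fixed basis of $H_1(Y,\mathbb Z)$.

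\emph{Decay of $g_E(a)$.} By symmetry, $g_E(a)=g_E(P_\infty,a)=g_E(a,P_\infty)$. The function $g(\cdot)=g_E(\cdot,P_\infty)$ is continuous on $Y\setminus\{P_\infty\}$ and vanishes on $\Gamma$. Fix a closed collar $U$ of $\Gamma$ not containing $P_\infty$. Then $g$ is uniformly continuous on $U$, so $\sup\{g(a):\operatorname{dist}(a,\Gamma)<\delta\}\to0$ as $\delta\to0$.

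\emph{Decay of the character.} Use the adapted basis $\mathfrak a_1,\dots,\mathfrak a_g,\mathfrak b_1,\dots,\mathfrak b_g,\Gamma_1,\dots,\Gamma_{p-1}$. Realise every cycle $c$ by a fixed smooth curve in a compact set $K\Subset Y^\circ$; the boundary cycles are represented by level curves $\{g=\delta_0\}$ parallel to $\Gamma_j$. Then
\[
\gamma_a(c)=\exp\Bigl(i\int_c *d_p g_E(p,a)\Bigr).
\]
It suffices to show $\int_c *d_pg_E(\cdot,a)\to0$ uniformly as $a\to\Gamma$.

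\emph{Step 1 (interior gradients).} Show that $\sup_{p\in K}|d_pg_E(p,a)|\to0$ as $a\to\Gamma$. The functions $g_E(\cdot,a)$, for $a$ outside a neighbourhood of $K$, are positive and harmonic on a neighbourhood $K'$ of $K$. By symmetry, $g_E(p,a)=g_E(a,p)$, and $g_E(\cdot,p)$ vanishes on $\Gamma$ and is continuous up to $\Gamma$. Moreover, $(a,p)\mapsto g_E(a,p)$ is jointly continuous on $U\times K'$ (boundary regularity, or the reflection of Lemma~\ref{lem:analytic-widom-41} applied uniformly for $p\in K'$). Hence $\sup_{p\in K'}g_E(p,a)\to0$ uniformly as $a\to\Gamma$. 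Harnack's inequality, or Cauchy estimates for harmonic functions, then gives $\sup_K|dg_E(\cdot,a)|\le C\sup_{K'}g_E(\cdot,a)\to0$.

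\emph{Step 2 (the boundary cycles).} One must check that no hidden $2\pi$-jump occurs from the logarithmic pole. For $c=\Gamma_j$ (or its level-curve representative), the period $\int_{\Gamma_j}*dg_E(\cdot,a)$ equals $-2\pi\,\omega_j(a)$ or $2\pi(\delta-\omega_j(a))$, according to whether $a$ lies inside the region cut off by the representative. By Green's identity this is precisely the planar formula of \cite[(4.3)]{Widom69}. Since $\omega_j(a)\to\delta_{jk}$ as $a\to\Gamma_k$ (Lemma~\ref{lem:harmonic-measure-boundary-form}), $\omega_j$ is continuous up to $\Gamma$, and $e^{2\pi i\cdot}$ kills integers, the character values on $\Gamma_j$ tend to $1$.

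\emph{Step 3 (the handle cycles).} For $\mathfrak a_j,\mathfrak b_j$, which lie in $K$ away from $a$, Step 1 applies directly: $\bigl|\int_c *dg_E(\cdot,a)\bigr|\le \operatorname{length}(c)\sup_K|dg_E(\cdot,a)|\to0$.

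Combining Steps 2 and 3 gives $\gamma_a\to0$ on every basis element uniformly, which is convergence in $\mathfrak X(Y)\cong\mathbb T^{\hat g}$.

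\emph{Main obstacle.} The delicate point is the uniformity in Step 1, namely joint continuity of $g_E(a,p)$ up to $a\in\Gamma$ uniformly in $p\in K'$. I would obtain it from the analytic collar: the functions $g_E(\cdot,p)$, $p\in K'$, reflect oddly across $\Gamma$ into a fixed collar $U_{K'}$, where they are uniformly bounded. Equicontinuity then follows from interior estimates in the doubled collar.
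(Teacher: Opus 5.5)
Your proposal is correct and follows essentially the same route as the paper. Uniform continuity of $g_E$ up to $\Gamma$ gives the first claim. For the character, you use an interior gradient estimate on the handle cycles (obtained from symmetry and joint continuity of the Green function away from the diagonal) and the harmonic-measure periods $\exp(\pm 2\pi i\,\omega_k(a))$ on the boundary cycles, as the paper does. Your extra care about the $2\pi$-ambiguity from the pole at $a$, and your justification of uniform joint continuity via the analytic collar, make explicit what the paper asserts in one line. Note that $2\pi(\delta-\omega_j(a))$ in your Step 2 presumably should read $2\pi(1-\omega_j(a))$.
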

\begin{proof}
The first assertion follows from the continuous extension of
\(g_E\) to \(\overline Y\), where it vanishes on \(\Gamma\). Since
\(\Gamma\) is compact, the convergence is uniform as \(a\) approaches
\(\Gamma\).

For the second assertion,  let \(c\) be one of the $\mathfrak a, \mathfrak b$
cycles, and choose relatively compact neighbourhoods
\[
        c\subset K\Subset U\Subset Y^\circ.
\]
For \(a\) sufficiently close to \(\Gamma\), one has
\(a\notin\overline U\), so the function
\[
        u_a(p):=g_E(p,a)
\]
is harmonic in \(p\) on \(U\). By symmetry of the Green function and
its joint continuity away from the diagonal,
\[
        \|u_a\|\longrightarrow0
\]
uniformly as \(a\to\Gamma\). The interior gradient estimate \cite[Chapter~2, Theorem~2.10]{GilbargTrudinger} for
harmonic functions therefore gives
\[
        \|d_p g_E(p,a)\|
        \le C_{K,U}\|g_E(\,\cdot\,,a)\|
        \longrightarrow0.
\]
Consequently,
\[
        \int_c *_p d_p g_E(p,a)\longrightarrow0,
\]
and hence the character of \(\Phi(\,\cdot\,,a)\) along \(c\)
tends to \(1\), uniformly as \(a\to\Gamma\).

For a boundary component \(\Gamma_k\), the corresponding period is,
up to the fixed orientation sign,
\[
        2\pi\omega_k(a),
\]
where \(\omega_k\) is the harmonic measure of \(\Gamma_k\). Each
\(\omega_k\) extends continuously to \(\overline Y\), with
\[
        \omega_k|_{\Gamma_j}=\delta_{kj}.
\]
Thus it is of the form
\[
        \exp\!\bigl(\pm2\pi i\,\omega_k(a)\bigr),
\]
extends continuously to \(\overline Y\) and is equal to \(1\) on
every boundary component. All the characters therefore tend uniformly to \(1\) as
\(a\to\Gamma\).
Since all of them tend
uniformly to \(1\), it follows that
\[
        \gamma_a\longrightarrow0
        \qquad\text{in }\mathfrak X(Y),
\]
uniformly as \(a\to\Gamma\).
\end{proof}
\begin{lemma}\
\label{lem:delete-near-boundary-zeros}
For every \(\varepsilon>0\) there is a \(\delta>0\) such that, for every
\(n\), one can find a section
\[
        G_{n,\delta}\in \mathcal{H}^{\infty}(Y,L_E^{-n}),
        \qquad G_{n,\delta}(P_\infty)=1,
\]
for which
\begin{equation}\label{eq:sw-deletion-bound}
        \|G_{n,\delta}\|_{\rho,\infty}
        \le e^\varepsilon\mu_n.
\end{equation}
Moreover, the sections \(G_{n,\delta}\) extend holomorphically through one
fixed analytic collar of \(\Gamma\), and are uniformly bounded there. The
collar and the bound may depend on \(\varepsilon\), but not on \(n\).
\end{lemma}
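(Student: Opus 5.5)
Starting from the product formula \eqref{eq:sw-extremal-product}, $F_n=\mu_nR_\rho^{-1}\Phi_{D_n}^{-1}$ with $\deg D_n\le\widehat g$, we split the divisor as $D_n=D_n'+D_n''$. Here $D_n''$ collects the zeros lying in the $\delta$-neighbourhood $\{g_E<\delta\}$ of $\Gamma$, and $D_n'$ the rest, which lie in the compact set $K_\delta:=\{g_E\ge\delta\}\subset Y^\circ$. Deleting the factors of $D_n''$ changes the character by $\gamma_{D_n''}=\sum_{a\in D_n''}\gamma_a$. By Lemma~\ref{lem:sw-near-character}, and because there are at most $\widehat g$ such zeros, this character tends to the trivial character uniformly in $n$ as $\delta\to0$. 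Restoring the prescribed character uses the local family \eqref{eq:nvlb} near the trivial character: $\psi_\chi$ is nowhere vanishing, satisfies $\psi_\chi(P_\infty)=1$, is built from fixed holomorphic differentials $\eta_j$ on the fixed neighbourhood $Y'$, and has $|\psi_\chi|\to1$ uniformly on $Y'$ as $\chi\to0$.

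With these ingredients, the candidate is
\[
G_{n,\delta}:=\mu_n'\,R_\rho^{-1}\,\Phi_{D_n'}^{-1}\,\psi_{\chi_n''},\qquad \chi_n'':=-\gamma_{D_n''}\ \text{(up to the sign convention)},
\]
where $\mu_n'$ is fixed by $G_{n,\delta}(P_\infty)=1$, that is, $\mu_n'=R_\rho(P_\infty)\Phi_{D_n'}(P_\infty)$. Its character is $-\gamma_{R_\rho}-\gamma_{D_n'}-\gamma_{D_n''}=\chi_n$, the same as that of $F_n$. On $\Gamma$ we have $|\Phi_{D_n'}|=1$ and $|R_\rho|=\rho$, so
\[
\|G_{n,\delta}\|_{\rho,\infty}\le\mu_n'\max_\Gamma|\psi_{\chi_n''}|.
\]
Comparing with \eqref{eq:mu-formula} gives $\mu_n'=\mu_n\exp\bigl(-\sum_{a\in D_n''}g_E(a)\bigr)\le\mu_n$. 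Choosing $\delta$ so that $\max_{Y'}|\psi_\chi|\le e^{\varepsilon}$ for all characters arising from at most $\widehat g$ points in $\{g_E<\delta\}$ then yields \eqref{eq:sw-deletion-bound}.

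For the collar statement, take the fixed compact $K_\delta$ and apply Lemma~\ref{lem:analytic-widom-41}(1). This gives a two-sided collar $U_{K_\delta}$, independent of $n$, through which each $\Phi(\cdot,a)$ with $a\in K_\delta$ continues holomorphically and without zeros. The factor $R_\rho^{-1}$ continues through part (4) with $f=\log\rho$, and $\psi_\chi$ is holomorphic on $Y'$. For uniform boundedness, $|\Phi(\cdot,a)^{-1}|=e^{-g_E(\cdot,a)}$ on the reflected side equals $e^{g_E(\sigma(\cdot),a)}$. By joint continuity of the reflected Green function on $\overline{U}\times K_\delta$ (away from the diagonal), this is bounded uniformly in $a\in K_\delta$ after shrinking the collar. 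Since at most $\widehat g$ factors appear and $\mu_n'\le\mu_n$ is bounded by \eqref{eq:sw-mu-bounds}, the bound is independent of $n$.

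The main obstacle is the uniformity of the character correction. The construction \eqref{eq:nvlb} is only local in $\chi$, so it must be applied in a single fixed neighbourhood of the trivial character, containing all $\gamma_{D_n''}$. This requires Lemma~\ref{lem:sw-near-character} in its uniform form, together with the observation that $\gamma$ is additive over at most $\widehat g$ points. One must also check that the choice of branches $\theta_j$ near $0$ keeps $\psi_\chi$ continuous with $\psi_0\equiv1$, so that $|\psi_\chi|\to1$ uniformly on the closed collar. A secondary point is that the collar produced by Lemma~\ref{lem:analytic-widom-41} depends on $K_\delta$, hence on $\varepsilon$, which the statement allows.
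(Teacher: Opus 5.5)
Your proposal is correct and is essentially the paper's proof. It uses the same near/far splitting of $D_n$, the same character correction by the local family \eqref{eq:nvlb} near the trivial character via Lemma~\ref{lem:sw-near-character}, and the same collar argument through Lemma~\ref{lem:analytic-widom-41}. Your normalizing constant $\mu_n'=R_\rho(P_\infty)\Phi_{D_n'}(P_\infty)$ is exactly the paper's $\mu_n/\Phi_{D_n''}(P_\infty)$.

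The differences are cosmetic: you measure nearness by $\{g_E<\delta\}$ rather than by distance to $\Gamma$, which is equivalent. Also, the bound $|\psi_\chi|\le e^{\varepsilon}$ should be asserted only on $\Gamma$ and on a compact collar, not on the whole open set $Y'$, where it may fail.
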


\begin{proof}
Write the product formula from Theorem~\ref{thm:extremal} as
\[
        F_n=\mu_nR_\rho^{-1}\Phi_{D_n}^{-1},
        \qquad \deg D_n\le\widehat g,
\]
and decompose
\[
        D_n=D_{n,\delta}^{\rm far}+D_{n,\delta}^{\rm near},
\]
where \(D_{n,\delta}^{\rm near}\) consists of the points whose distance
from \(\Gamma\) is less than \(\delta\), counted with multiplicity. Since
the degree of \(D_n\) is at most \(\widehat g\),
Lemma~\ref{lem:sw-near-character} gives
\[
        \xi_{n,\delta}
        :=-\gamma_{D_{n,\delta}^{\rm near}}
        \longrightarrow0
        \quad\text{in }\mathfrak X(Y)
\]
as \(\delta\downarrow0\), uniformly in \(n\).

For \(\delta\) sufficiently small, all the characters
\(\xi_{n,\delta}\) lie in the neighbourhood of the trivial character on
which \eqref{eq:nvlb} is defined. Let \(\psi_{n,\delta}\) be the resulting
nowhere-vanishing holomorphic section of \(L_{\xi_{n,\delta}}\), normalized
by
\[
        \psi_{n,\delta}(P_\infty)=1.
\]
We take \(\psi_{n,\delta}\equiv1\) when
\(D_{n,\delta}^{\rm near}=0\). The explicit construction in
\eqref{eq:nvlb} depends continuously on the character. Thus, after
decreasing \(\delta\) if necessary, these sections are uniformly bounded
in a fixed neighbourhood of \(Y\) and satisfy
\begin{equation}\label{eq:sw-small-character-bound}
        |\psi_{n,\delta}|\le e^\varepsilon
        \qquad\text{on }\Gamma,\quad n\ge1.
\end{equation}

Define
\begin{equation}\label{eq:sw-deleted-section}
        G_{n,\delta}
        :=
        \frac{\mu_nR_\rho^{-1}
        \Phi_{D_{n,\delta}^{\rm far}}^{-1}\psi_{n,\delta}}
        {\Phi_{D_{n,\delta}^{\rm near}}(P_\infty)}.
\end{equation}
Deleting the near factors changes the character by
\(\gamma_{D_{n,\delta}^{\rm near}}\), while
\(\psi_{n,\delta}\) has the opposite character. Hence
\[
\begin{aligned}
        \operatorname{char}G_{n,\delta}
        &=-\gamma_{R_\rho}
          -\gamma_{D_{n,\delta}^{\rm far}}
          +\xi_{n,\delta} \\
        &=-\gamma_{R_\rho}-\gamma_{D_n}
         =\chi_n=-n\gamma_E .
\end{aligned}
\]
Thus \(G_{n,\delta}\) is a holomorphic section of \(L_E^{-n}\).

The normalization follows directly from \(F_n(P_\infty)=1\). Indeed,
\[
        \mu_nR_\rho(P_\infty)^{-1}
        \Phi_{D_{n,\delta}^{\rm far}}(P_\infty)^{-1}
        =
        \Phi_{D_{n,\delta}^{\rm near}}(P_\infty),
\]
and therefore \eqref{eq:sw-deleted-section} gives
\(G_{n,\delta}(P_\infty)=1\). On \(\Gamma\), using
\(|R_\rho|=\rho\) and \(|\Phi_D|=1\), we obtain
\[
        |G_{n,\delta}|\rho
        =
        \mu_n
        \frac{|\psi_{n,\delta}|}
        {\Phi_{D_{n,\delta}^{\rm near}}(P_\infty)}.
\]
Our normalization of the Green factors gives
\[
        \Phi_{D_{n,\delta}^{\rm near}}(P_\infty)
        =
        \exp\!\left(
        \sum_{a\in D_{n,\delta}^{\rm near}}g_E(a)
        \right)
        \ge1.
\]
Consequently, \eqref{eq:sw-small-character-bound} proves
\eqref{eq:sw-deletion-bound} without any further loss in
\(\varepsilon\).

It remains only to check the collar bound. The formula for \(\mu_n\) and
Theorem~\ref{thm:range} give
\[
        \sum_{a\in D_n}g_E(a)
        =
        \log\frac{\mu_n}{R_\rho(P_\infty)}
        \le\log\Phi_B(P_\infty).
\]
Since \(g_E(a)\to+\infty\) as \(a\to P_\infty\), the points of \(D_n\)
remain uniformly away from \(P_\infty\). Once \(\delta\) is fixed, the
supports of \(D_{n,\delta}^{\rm far}\) therefore lie in a compact subset of
\(Y^\circ\) separated from \(\Gamma\).
Lemma~\ref{lem:analytic-widom-41} gives a fixed collar on which the
corresponding inverse Green factors are holomorphic and uniformly bounded.
The outer section \(R_\rho^{-1}\) is fixed and extends through an analytic
collar, the sections \(\psi_{n,\delta}\) are uniformly bounded there by
construction, and \(\mu_n\) is uniformly bounded by
Theorem~\ref{thm:range}. Finally, the denominator in
\eqref{eq:sw-deleted-section} is at least one. These observations give a
collar and a bound independent of \(n\), and complete the proof.
\end{proof}

\subsection{Proof of Theorem~\ref{thm:sw}: Part I}
\label{sec:proof-sw-part-i}

We first prove \eqref{eq:sw-norm} and the local convergence in
\eqref{eq:sw-fn}. The argument follows the first part of Widom's proof of
Theorem~8.3; see \cite[pp.~185--187]{Widom69}.

The lower bound is \eqref{eq:comparison}. For the reverse
inequality, fix \(\varepsilon>0\), and choose \(\delta>0\) and
\(G_{n,\delta}\) as in Lemma~\ref{lem:delete-near-boundary-zeros}. Thus
\(G_{n,\delta}(P_\infty)=1\), the sections \(G_{n,\delta}\) are uniformly
bounded in a fixed analytic collar of \(\Gamma\), and
\[
        \|G_{n,\delta}\|_{\rho,\infty}
        \le e^\varepsilon\mu_n .
\]
Proposition~\ref{lem:analytic-projection} then gives admissible functions
\(Q_n\in H^0(X,\cO(nP_\infty))\) and constants \(C_\varepsilon>0\) and
\(0<r_\varepsilon<1\), independent of \(n\), such that
\[
        \sup_\Gamma
        \bigl|Q_n-\capc(E)^n\Phi_E^nG_{n,\delta}\bigr|
        \le C_\varepsilon\capc(E)^n r_\varepsilon^n .
\]
Since \(|\Phi_E|=1\) on \(\Gamma\), after absorbing
\(\max_\Gamma\rho\) into \(C_\varepsilon\), we obtain
\[
        M_{n,\rho}
        \le \|Q_n\rho\|_\Gamma
        \le \capc(E)^n
        \bigl(e^\varepsilon\mu_n+C_\varepsilon r_\varepsilon^n\bigr).
\]
Together with \eqref{eq:comparison}, this gives
\begin{equation}\label{eq:sw-part-i-squeeze}
        0\le \capc(E)^{-n}M_{n,\rho}-\mu_n
        \le (e^\varepsilon-1)\mu_n+C_\varepsilon r_\varepsilon^n .
\end{equation}
Theorem~\ref{thm:range} bounds \(\mu_n\) independently of \(n\). Taking the
upper limit in \eqref{eq:sw-part-i-squeeze} and then letting
\(\varepsilon\downarrow0\), we conclude that
\begin{equation}\label{eq:sw-part-i-norm}
        \capc(E)^{-n}M_{n,\rho}-\mu_n\longrightarrow0,
\end{equation}
which is \eqref{eq:sw-norm}.

Set
\[
        \mathcal T_n
        :=\capc(E)^{-n}T_{n,\rho}\Phi_E^{-n}.
\]
The normalization of \(\Phi_E\) at \(P_\infty\), together with the monic
principal part of \(T_{n,\rho}\), shows that the singularity of
\(\mathcal T_n\) at \(P_\infty\) is removable and that
\(\mathcal T_n(P_\infty)=1\). Hence
\(\mathcal T_n\in \mathcal{H}^{\infty}(Y,L_{\chi_n})\), where
\(\chi_n=-n\gamma_E\), and \eqref{eq:sw-part-i-norm} yields
\begin{equation}\label{eq:sw-part-i-asymptotic-extremal}
        \|\mathcal T_n\|_{\rho,\infty}
        =\capc(E)^{-n}M_{n,\rho}
        =\mu_n+o(1).
\end{equation}

It remains to compare \(\mathcal T_n\) with the exact extremal \(F_n\). Let
\((n_k)\) be any subsequence. Since \(\mathfrak X(Y)\) is compact, after
passing to a further subsequence we may assume that
\(\chi_{n_k}\to\chi\). To place the sections in one fixed bundle, put
\(\alpha_k:=\chi-\chi_{n_k}\). The local construction
\eqref{eq:nvlb}, applied near the trivial character, gives nowhere-vanishing
sections \(\psi_k\) of \(L_{\alpha_k}\), normalized by
\(\psi_k(P_\infty)=1\), such that
\begin{equation}\label{eq:sw-part-i-character-correction}
        |\psi_k|^{\pm1}\longrightarrow1
        \quad\text{uniformly on }\Gamma,
        \qquad
        \psi_k^{\pm1}\longrightarrow1
        \quad\text{locally uniformly on }Y^\circ
\end{equation}
in the local trivializations furnished by that construction. Thus
\[
        \widehat F_k:=\psi_kF_{n_k},
        \qquad
        \widehat{\mathcal T}_k:=\psi_k\mathcal T_{n_k}
\]
are sections of the fixed bundle \(L_\chi\), and both take the value \(1\)
at \(P_\infty\). By \eqref{eq:range-continuity-comparison},
\(\mu_{n_k}\to\mu(\rho,\chi)\); moreover,
\eqref{eq:sw-part-i-asymptotic-extremal} and
\eqref{eq:sw-part-i-character-correction} give
\begin{equation}\label{eq:sw-part-i-corrected-norms}
        \|\widehat F_k\|_{\rho,\infty}
        \le \mu(\rho,\chi)+o(1),
        \qquad
        \|\widehat{\mathcal T}_k\|_{\rho,\infty}
        \le \mu(\rho,\chi)+o(1).
\end{equation}

The maximum principle, applied to the single-valued moduli of
\(R_\rho\widehat F_k\) and \(R_\rho\widehat{\mathcal T}_k\), now gives
uniform bounds on compact subsets of \(Y^\circ\). Montel's theorem in local
frames therefore gives a further subsequence for which
\[
        \widehat F_k\longrightarrow F,
        \qquad
        \widehat{\mathcal T}_k\longrightarrow G
\]
locally uniformly on \(Y^\circ\), where \(F\) and \(G\) are holomorphic
sections of \(L_\chi\) and \(F(P_\infty)=G(P_\infty)=1\). The boundary-norm
estimate passes to the limit without requiring convergence on \(\Gamma\):
for every \(p\in Y^\circ\), the maximum principle and
\eqref{eq:sw-part-i-corrected-norms} give
\[
        |R_\rho(p)F(p)|\le\mu(\rho,\chi),
        \qquad
        |R_\rho(p)G(p)|\le\mu(\rho,\chi).
\]
Thus \(R_\rho F\) and \(R_\rho G\) are bounded by \(\mu(\rho,\chi)\) in
\(Y^\circ\). Since \(R_\rho\) is nowhere zero, it follows that
\(F,G\in \mathcal{H}^{\infty}(Y,L_\chi)\); thus
\[
        \|F\|_{\rho,\infty}\le\mu(\rho,\chi),
        \qquad
        \|G\|_{\rho,\infty}\le\mu(\rho,\chi).
\]
The reverse inequalities follow from the definition of \(\mu(\rho,\chi)\),
since both sections are normalized at \(P_\infty\). Thus \(F\) and \(G\) are
both extremal in the character class \(\chi\), and the uniqueness assertion
in Theorem~\ref{thm:extremal} gives \(F=G\). It follows from
\eqref{eq:sw-part-i-character-correction} that, for every
\(K\Subset Y^\circ\),
\[
        \sup_K|\mathcal T_{n_k}-F_{n_k}|
        \le
        \sup_K|\psi_k|^{-1}
        \sup_K|\widehat{\mathcal T}_k-\widehat F_k|
        \longrightarrow0.
\]

Every subsequence has a further subsequence with this property. If the full
sequence failed to converge locally uniformly, there would be a compact set
\(K\Subset Y^\circ\), a number \(c>0\), and a subsequence for which
\(\sup_K|\mathcal T_n-F_n|\ge c\); the preceding argument would then give a
further subsequence on which this supremum tends to zero, a contradiction.
Therefore
\[
        \frac{T_{n,\rho}}{\capc(E)^n\Phi_E^n}-F_n
        \longrightarrow0
\]
locally uniformly on \(Y^\circ\), with the removable value at
\(P_\infty\) included. This proves \eqref{eq:sw-fn} on every relatively
compact subset of \(Y^\circ\). The stronger convergence up to \(\Gamma\)
under the zero-separation hypothesis will be proved in Part~II.

\subsection{Proof of Theorem~\ref{thm:sw}: Part II}
\label{sec:proof-sw-part-ii}

Assume now that the divisors \(D_n\) stay a positive distance from
\(\Gamma\). We follow the positive-measure argument in the final part of
Widom's proof of Theorem~8.3; see
\cite[especially (8.11)--(8.17), pp.~188--190]{Widom69}.

It is useful first to make the compactness supplied by the hypothesis
explicit. By \eqref{eq:mu-formula} and Theorem~\ref{thm:range},
\[
        \sum_{a\in D_n}g_E(a)
        =\log\frac{\mu_n}{R_\rho(P_\infty)}
\]
is bounded independently of \(n\). Since \(g_E(a)\to+\infty\) as
\(a\to P_\infty\), the zero-separation hypothesis implies that
\(\operatorname{supp}D_n\) is contained in one compact set
\(K_0\Subset Y^\circ\setminus\{P_\infty\}\). The product formula
\eqref{eq:product}, Lemma~\ref{lem:analytic-widom-41}, and the bound
\(\deg D_n\le\widehat g\) therefore give a fixed two-sided collar \(U\) of
\(\Gamma\) on which
\begin{equation}\label{eq:sw-part-ii-F-collar}
        \sup_n\sup_U
        \bigl(|F_n|+|F_n^{-1}|+|dF_n|\bigr)<\infty.
\end{equation}

We may consequently apply Proposition~\ref{lem:analytic-projection} directly to
\(F_n\). Together with \eqref{eq:comparison} and the boundary
identity \(|F_n|\rho=\mu_n\), which is pointwise here by
Lemma~\ref{lem:widom-local-regularity}, this gives constants \(C>0\) and
\(0<r<1\), independent of \(n\), such that
\begin{equation}\label{eq:sw-part-ii-sharp-norm}
        0\le \capc(E)^{-n}M_{n,\rho}-\mu_n\le Cr^n.
\end{equation}

Choose a normalized dual extremal \(\eta_n\), so that
\[
        \frac1{2\pi}\int_\Gamma|\eta_n|\rho^{-1}=1,
        \qquad
        \operatorname*{Res}_{P_\infty}\eta_n=\mu_n,
\]
and put
\begin{equation}\label{eq:sw-part-ii-dual-probability}
        d\lambda_n
        :=\frac{1}{2\pi i\,\mu_n}\,\iota^*(F_n\eta_n).
\end{equation}
By Lemma~\ref{lem:widom-boundary-relations}, this is a positive probability
measure on \(\Gamma\). Write
\[
        (F_n\eta_n)_0
        =a_{1,n}+\cdots+a_{\widehat g,n},
\]
with boundary zeros counted by half multiplicity. The Neumann product proved
above gives
\[
        \frac{F_n\eta_n}{d\mathcal G}
        =C_n\prod_{\ell=1}^{\widehat g}
        \Psi_{b_\ell,a_{\ell,n}},
\]
where a factor with \(a_{\ell,n}=b_\ell\) is understood to be one. Write
\(\beta_g=\beta\,ds\), with \(\beta>0\). Since
\(\iota^*(d\mathcal G)=-i\beta_g\) and the boundary quotient is
non-positive, taking absolute values and using
\(\lambda_n(\Gamma)=1\) gives the exact formula
\begin{equation}\label{eq:sw-part-ii-density}
        d\lambda_n(q)
        =
        \frac{w_n(q)\,ds_q}
        {\displaystyle\int_\Gamma w_n\,ds},
        \qquad
        w_n(q)
        :=
        \beta(q)\prod_{\ell=1}^{\widehat g}
        \exp N_Y(q;b_\ell,a_{\ell,n}).
\end{equation}

We shall need one consequence of this formula. There are constants
\(c>0\) and \(L_0>0\), independent of \(n\), such that every boundary arc
\(I\) with \(|I|\le L_0\) satisfies
\begin{equation}\label{eq:sw-part-ii-mass-lower}
        \lambda_n(I)\ge c|I|^{2\widehat g+1}.
\end{equation}
Indeed, choose a finite collection of nested analytic boundary charts
\(V_0\Subset V_1\Subset V\), disjoint from the fixed points \(b_\ell\), so
that every sufficiently short arc is contained in one of the smallest
charts. By \eqref{eq:dz}, a factor whose moving zero lies in \(V_1\) has,
on \(V_0\), the form
\[
        u(s)\bigl((s-\alpha)^2+\tau^2\bigr),
        \qquad \tau\ge0,
\]
with \(u\) bounded above and below by positive constants. A factor whose
moving zero lies outside \(V_1\) is bounded above and below on \(V_0\),
again uniformly in its parameter. The coordinate length is uniformly
comparable with arclength. If \(L\) is the coordinate length of \(I\) and
\(r_0\le\widehat g\) factors of the first kind occur, a change of variables
therefore reduces the required lower estimate to
\[
        L^{2r_0+1}\int_0^1|P(x)|^2\,dx,
\]
where \(P\) is a monic polynomial of degree \(r_0\). The infimum of this
integral over all monic polynomials of a fixed degree is positive: it is the
squared distance in \(L^2[0,1]\) from \(x^{r_0}\) to the polynomials of
degree at most \(r_0-1\), with this space interpreted as \(\{0\}\) when
\(r_0=0\). Since \(0\le r_0\le\widehat g\) and \(L\le1\), this proves
\[
        \int_I w_n\,ds\ge c|I|^{2\widehat g+1}.
\]
The same parameter-uniform estimates give
\(\int_\Gamma w_n\,ds\le C\), and
\eqref{eq:sw-part-ii-mass-lower} follows from
\eqref{eq:sw-part-ii-density}. This is the bordered-surface form of the
estimate used by Widom after (8.16).

Set
\[
        W_n:=\frac{\mathcal T_n}{F_n}.
\]
By \eqref{eq:sw-part-ii-F-collar}, \(W_n\) is a single-valued holomorphic
function in a fixed collar of \(\Gamma\). On the boundary,
\eqref{eq:sw-part-ii-sharp-norm}, \(|F_n|\rho=\mu_n\), and the uniform lower
bound for \(\mu_n\) give
\begin{equation}\label{eq:sw-part-ii-W-bound}
        |W_n|\le1+Cr^n.
\end{equation}
Moreover, the residue theorem and the normalization
\(\mathcal T_n(P_\infty)=1\) give
\[
        \int_\Gamma W_n\,d\lambda_n
        =
        \frac{1}{2\pi i\,\mu_n}
        \int_\Gamma\mathcal T_n\eta_n
        =1.
\]
Since \(\lambda_n\) is a probability measure, it follows that
\begin{equation}\label{eq:sw-part-ii-L2}
\begin{aligned}
        \int_\Gamma|W_n-1|^2\,d\lambda_n
        &=
        \int_\Gamma|W_n|^2\,d\lambda_n-1 \\
        &\le Cr^n.
\end{aligned}
\end{equation}

We next record the boundary derivative estimate
\begin{equation}\label{eq:sw-part-ii-derivative}
        \left|\frac{dW_n}{ds}\right|\le Cn
        \qquad\text{on }\Gamma.
\end{equation}
This requires a two-sided estimate. From
\eqref{eq:sw-part-ii-sharp-norm} and \(\min_\Gamma\rho>0\),
\[
        \sup_\Gamma|T_{n,\rho}|
        \le C\capc(E)^n.
\]
The same bound holds on the \(E\)-side by the maximum principle in each
component of \(E\), while Theorem~\ref{thm:surface-BW} gives
\[
        |T_{n,\rho}(p)|
        \le C\capc(E)^n e^{ng_E(p)}
        \qquad (p\in Y^\circ).
\]
Let \(g_E^{\rm ext}\) denote the reflected continuation of \(g_E\) through
\(\Gamma\). Since \(|\Phi_E|=e^{g_E^{\rm ext}}\), the preceding two
estimates show that
\begin{equation}\label{eq:sw-part-ii-thin-collar}
        \sup_{\{|g_E^{\rm ext}|\le c/n\}}
        |\mathcal T_n|
        \le C.
\end{equation}
Hopf's lemma gives \(|dg_E^{\rm ext}|>0\) near \(\Gamma\), so this strip has
geometric width comparable to \(1/n\). Cauchy's estimate in local flat
frames therefore gives
\[
        \left|\frac{d\mathcal T_n}{ds}\right|
        \le Cn
        \qquad\text{on }\Gamma,
\]
and \eqref{eq:sw-part-ii-derivative} follows from
\eqref{eq:sw-part-ii-F-collar}.

Suppose that \(W_n\not\to1\) uniformly on \(\Gamma\). Then, for some
\(\varepsilon>0\) and along a subsequence, there are points
\(q_n\in\Gamma\) such that
\[
        |W_n(q_n)-1|\ge\varepsilon.
\]
By \eqref{eq:sw-part-ii-derivative}, there is an arc \(I_n\) centered at
\(q_n\), of length \(c_\varepsilon/n\), on which
\[
        |W_n-1|\ge\varepsilon/2.
\]
Hence \eqref{eq:sw-part-ii-mass-lower} gives
\[
        \int_\Gamma|W_n-1|^2\,d\lambda_n
        \ge
        c_\varepsilon n^{-(2\widehat g+1)},
\]
contradicting the exponential estimate \eqref{eq:sw-part-ii-L2}. Thus
\(W_n\to1\) uniformly on \(\Gamma\). Since the sections \(F_n\) are
uniformly bounded there,
\[
        \mathcal T_n-F_n
        =
        F_n(W_n-1)
        \longrightarrow0
        \qquad\text{uniformly on }\Gamma.
\]
For each fixed \(n\), the difference \(\mathcal T_n-F_n\) is a holomorphic
section of the same flat unitary bundle \(L_E^{-n}\). Its norm is therefore
single-valued and subharmonic, and the maximum principle gives
\[
        \sup_Y|\mathcal T_n-F_n|
        \le
        \sup_\Gamma|\mathcal T_n-F_n|
        \longrightarrow0.
\]
This proves the final assertion of \eqref{eq:sw-fn} and completes the proof
of Theorem~\ref{thm:sw}.

\subsection{Proof of Corollary~\ref{cor:limitpoints}}

The zero-separation hypothesis is not needed for this consequence. Let
\[
        G_\gamma
        :=
        \overline{\{\chi_n:n\ge1\}}
        \subset\mathfrak X(Y).
\]
By \eqref{eq:sw-norm} and the continuity of
\(\chi\mapsto\mu(\rho,\chi)\), the set of limit points of
\(M_{n,\rho}/\capc(E)^n\) is exactly
\begin{equation}\label{eq:sw-limit-set}
        \{\mu(\rho,\chi):\chi\in H\}.
\end{equation}
The closure of a cyclic semigroup in a compact group is a subgroup; hence
\(H\) is a closed subgroup of the torus \(\mathfrak X(Y)\). Such a subgroup
has finitely many connected components, each a translate of a subtorus;
this is the group-theoretic observation used by Widom in the proof of
Theorem~8.4 \cite[pp.~190--191]{Widom69}. The continuous image of each
component under \(\mu(\rho,\cdot)\) is therefore a closed interval. After
combining overlapping intervals, \eqref{eq:sw-limit-set} is a finite union
of pairwise disjoint closed intervals, all contained in the interval
\eqref{eq:range}.

If \(1,\theta_1,\ldots,\theta_{\widehat g}\) are linearly independent over
\(\mathbb Q\), Kronecker's theorem makes the orbit \(\{\chi_n\}\) dense in
\(\mathfrak X(Y)\). Thus \(H=\mathfrak X(Y)\), and
Theorem~\ref{thm:range} shows that \eqref{eq:sw-limit-set} is the whole
interval \eqref{eq:range}. If instead every \(\theta_j\) belongs to
\(q^{-1}\mathbb Z\), then \(\chi_{n+q}=\chi_n\), so the limit set contains
at most \(q\) points. This proves the corollary.

\section{System of equations}
\label{subsec:widom-zero-systems}

We now extract the equations for the zeros from the product identities proved above. With \(A=a_1+\cdots+a_{\widehat g}\) and \(B=b_1+\cdots+b_{\widehat g}\) as above, put
\[
        \mathcal P_A:=\prod_{\ell=1}^{\widehat g}\Psi_{b_\ell,a_\ell},
        \qquad
        \operatorname{div}\mathcal P_A=A-B,
        \qquad
        \frac{F_0\eta_0}{d\mathcal G}
        =C\mathcal P_A,\quad C\in\mathbb C^*.
\]
 The character of every Neumann factor is trivial on the boundary cycles, so \(\operatorname{char}\mathcal P_A\) belongs to
\[
        \mathfrak X_0(Y):=
        \bigl\{
        \xi\in\mathfrak X(Y):
        \xi([\Gamma_k])=1,\ k=1,\ldots,p
        \bigr\}
        \simeq\mathbb T^{2g}.
\]
Since \(F_0\eta_0/d\mathcal G\) is single-valued and has the same boundary argument on every component of \(\Gamma\), the first system is
\begin{subequations}\label{eq:widom-first-system}
\begin{align}
        \operatorname{char}\mathcal P_A
        =\sum_{\ell=1}^{\widehat g}
          \operatorname{char}\Psi_{b_\ell,a_\ell}
        &=0
        &&\text{in }\mathfrak X_0(Y),
        \label{eq:widom-first-character}\\
        \left.
        \frac{\mathcal P_A}{|\mathcal P_A|}
        \right|_{\Gamma_k}
        &=
        \left.
        \frac{\mathcal P_A}{|\mathcal P_A|}
        \right|_{\Gamma_p},
        && k=1,\ldots,p-1.
        \label{eq:widom-first-boundary}
\end{align}
\end{subequations}
The second line is read only after the first has made \(\mathcal P_A\) single-valued; at a boundary zero its phase is continued through the zero, whose reflected order is even. The two lines contribute, respectively, \(2g\) handle conditions and \(p-1\) relative boundary-phase conditions, hence \(\widehat g=2g+p-1\) real equations.

Let \(D=(F_0)_0=z_1+\cdots+z_q\). As observed above, \(D\leq A\), the divisor \(D\) is supported in \(Y^\circ\), and \(q\leq\widehat g\). Taking characters in
\[
        F_0=\mu(\rho,\chi)R_\rho^{-1}\Phi_D^{-1}
\]
gives the second system
\begin{equation}\label{eq:widom-second-system}
        \gamma_D
        :=
        \sum_{j=1}^q\gamma_{z_j}
        =
        -\chi-\gamma_{R_\rho}
        \qquad\text{in }\mathfrak X(Y).
\end{equation}

These conditions are also sufficient. Indeed, suppose that an effective divisor \(A\) of degree \(\widehat g\) and an interior sub-divisor \(D\leq A\), both disjoint from \(P_\infty\), satisfy \eqref{eq:widom-first-system} and \eqref{eq:widom-second-system}. Choose \(\lambda_A\in\mathbb T\) so that \(\lambda_A\mathcal P_A\geq0\) on \(\Gamma\), and set
\begin{equation}\label{eq:widom-zero-reconstruction}
        M_D:=R_\rho(P_\infty)\Phi_D(P_\infty),
        \qquad
        F_D:=M_D R_\rho^{-1}\Phi_D^{-1},
        \qquad
        \eta_{A,D}:=
        -c_A\lambda_A\mathcal P_A
        \frac{d\mathcal G}{F_D},
\end{equation}
where \(c_A>0\) is chosen so that
\[
        \frac1{2\pi}
        \int_\Gamma|\eta_{A,D}|\rho^{-1}=1.
\]
Then \(F_D\) has character \(\chi\), \(F_D(P_\infty)=1\), and \(|F_D|\rho=M_D\) on \(\Gamma\), whereas \(\eta_{A,D}\) has character \(-\chi\) and
\[
        \operatorname{div}\eta_{A,D}=A-D-P_\infty,
        \qquad
        \frac{\iota^*(F_D\eta_{A,D})}
             {\iota^*(d\mathcal G)}
        =
        -c_A\lambda_A\mathcal P_A
        \leq0.
\]
In particular, \(D\leq A\) gives
\[
        \eta_{A,D}\in
        \mathcal H^1
        \bigl(Y,K_Y\otimes L_\chi^{-1}(P_\infty)\bigr).
\]
Lemma~\ref{lem:widom-5-3-surface} therefore applies and yields
\[
        F_D=F_0,
        \qquad
        \mu(\rho,\chi)
        =
        M_D
        =
        R(P_\infty)\Phi_D(P_\infty).
\]
Consequently the two systems are necessary and sufficient. 

There is an Abel--Jacobi interpretation, but not a reduction of the whole problem to ordinary Jacobi inversion. Let \(\widehat Y\) be the Schottky double of \(Y\), of genus \(\widehat g\), let \(\sigma\) be its reflection, and put
\[
        A^{\mathrm d}:=A+\sigma_*A,
        \qquad
        B^{\mathrm d}:=B+\sigma_*B,
\]
with the boundary multiplicities doubled. If \eqref{eq:widom-first-system} holds, a unimodular multiple of \(\mathcal P_A\) is non-negative on \(\Gamma\) and reflects to a meromorphic function on \(\widehat Y\) with divisor \(A^{\mathrm d}-B^{\mathrm d}\). Thus, writing \(\mathcal A_{\widehat Y}\) for the Abel--Jacobi map, Abel's theorem gives
\begin{equation}\label{eq:widom-abel-condition}
        \mathcal A_{\widehat Y}
        \bigl(A^{\mathrm d}-B^{\mathrm d}\bigr)
        =0
        \qquad\text{in }\operatorname{Jac}(\widehat Y).
\end{equation}
This is the real principal-divisor condition underlying the first system, and \eqref{eq:widom-second-system} may likewise be viewed as a real Abel equation through the Green-character map \(D\mapsto\gamma_D\). Ordinary Jacobi inversion, however, does not impose the common boundary sign, keep the divisor in the chosen half of the double, or enforce the variable degree and the sub-divisor constraint \(D\leq A\). The simultaneous problem is therefore a constrained real Jacobi inversion problem, not the classical one; compare \cite[Secs.~20--21]{Forster}. This distinction is already present in Widom: he reduces the later \(L^2\) problem, but not the \(\mathcal{H}^{\infty}\) system of Theorem~5.4, to classical Jacobi inversion \cite[pp.~139, 160, 171--172]{Widom69}.

\section{Worked out examples}
\label{sec:genus-one-examples}

This section is meant to be read as a calculation.  We choose the simplest
surface of positive genus, write down its meromorphic functions in concrete
coordinates, and solve several Chebyshev problems exactly.  No theta functions
are needed.  The only recurring device is the elementary average of a function
over the two points \(z\) and \(-z\).

The examples exhibit three different phenomena.  A complete inverse image under
\(\wp\) reduces an even-order problem to an ordinary planar Chebyshev problem.
An odd-order problem on the same set becomes a weighted planar problem.    Thus the
examples do more than illustrate the root asymptotic: they show, at finite
degree, how the topology of the torus enters through the Green function.

\subsection{The square torus and  Weierstrass \texorpdfstring{$\wp$}{wp}}

Let
\[
        X=\C/(\Z+i\Z),\qquad P_\infty=0,
\]
and use \(z\) as the local coordinate at \(0\).  We write
\[
        x(z):=\wp(z),
        \qquad
        y(z):=-\frac12\wp'(z).
\]
The factor \(-1/2\) is convenient: both \(x^m\) and \(yx^m\) then have leading
Laurent coefficient one at \(0\).

For the square lattice put
\[
        e:=\wp(1/2)=\frac{\Gamma(1/4)^4}{8\pi}>0.
\]
The standard  formulas give \(g_3=0\) and \(g_2=4e^2\); with
half-period \(\omega_1=1/2\), this is exactly
\cite[23.5.3--23.5.4]{NISTDLMF}.  The Weierstrass equation therefore reads
\begin{equation}\label{eq:lemniscatic-curve}
        y^2=x^3-e^2x=x(x-e)(x+e).
\end{equation}
The map \(x:X\to\widehat\C\) has degree two.  Its finite branch values are
\(e,0,-e\), attained at \(1/2,(1+i)/2,i/2\), respectively.  At \(0\),
\begin{equation}\label{eq:wp-local-expansions}
        x(z)=z^{-2}+\frac{g_2}{20}z^2+O(z^6),
        \qquad
        y(z)=z^{-3}-\frac{g_2}{20}z+O(z^5).
\end{equation}

Every elliptic function for this lattice is rational in \(\wp\) and \(\wp'\).
If it has no pole away from \(0\), no denominators are needed after using
\eqref{eq:lemniscatic-curve}: it has a unique form
\begin{equation}\label{eq:elliptic-reduced-form}
        A(x)+yB(x),\qquad A,B\in\C[x].
\end{equation}
Equivalently, the affine curve \(X\setminus\{0\}\) has coordinate ring
\[
        \C[x,y]/(y^2-x^3+e^2x).
\]
This is the usual algebraic description of elliptic functions; see, for
example, \cite[Ch.~III]{FarkasKra}.

For later use we record exactly which monomials occur.  For \(N\ge2\),
\begin{equation}\label{eq:elliptic-RR-basis}
\begin{split}
        L(N\cdot0)
        ={}&\operatorname{span}\{1,x,\ldots,x^{\lfloor N/2\rfloor}\}\\
        &\oplus
        y\,\operatorname{span}
        \{1,x,\ldots,x^{\lfloor(N-3)/2\rfloor}\},
\end{split}
\end{equation}
where the second line is absent when \(N<3\).  The displayed functions have the
distinct pole orders
\[
        0,2,3,4,\ldots,N.
\]
They are therefore linearly independent, and their number is
\(N=\dim L(N\cdot0)\) by Riemann--Roch.  This proves
\eqref{eq:elliptic-RR-basis}.

At every admissible pole order we now have a canonical monic function:
\begin{equation}\label{eq:canonical-elliptic-monomials}
        H_{2m}=x^m\quad(m\ge1),
        \qquad
        H_{2m+3}=yx^m\quad(m\ge0).
\end{equation}
There is no admissible function of order \(1\).

The decomposition \eqref{eq:elliptic-reduced-form} is useful because
\[
        x(-z)=x(z),\qquad y(-z)=-y(z).
\]
Thus \(A(x)\) is the even part and \(yB(x)\) is the odd part.  This simple
observation will help us in what follows.  Notice that
\eqref{eq:elliptic-RR-basis} concerns scalar meromorphic functions, the
competitors in the Chebyshev problem.  Sections of a nontrivial flat line bundle
are different objects and generally require sigma or theta functions.

\subsection{Pullbacks}

For a compact non-polar set \(K\subset\C\), put
\[
        E=x^{-1}(K).
\]
 If \(z\in E\), then \(-z\in E\) as
well.  For \(F=A(x)+yB(x)\),
\begin{equation}\label{eq:two-sheet-average}
        \frac{F(z)+F(-z)}2=A(x(z)),
        \qquad
        \frac{F(z)-F(-z)}2=y(z)B(x(z)).
\end{equation}
Both projections have norm at most \(\|F\|_E\).

Write \(t_m^{\C}(K)\) for the minimum norm of a monic polynomial of degree
\(m\) on \(K\), and let \(Q_m\) denote its Chebyshev polynomial.

\begin{proposition}
\label{prop:trace-transfer}
For every \(m\ge1\),
\begin{equation}\label{eq:trace-transfer-norm}
        t_{2m}(x^{-1}(K),0)=t_m^{\C}(K),
\end{equation}
and \(Q_m(x)\) is an -\(2m\)-th  extremal.
\end{proposition}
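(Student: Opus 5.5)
The plan is to prove the two inequalities separately, both resting on the decomposition \eqref{eq:elliptic-reduced-form} and the two-sheet average \eqref{eq:two-sheet-average}.

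\emph{Upper bound.} Take $F:=Q_m(x)$. Since $Q_m$ is monic of degree $m$ and $x(z)=z^{-2}+O(z^2)$ by \eqref{eq:wp-local-expansions}, we have $F(z)=z^{-2m}+O(z^{-2m+2})$, so $F$ is admissible of order $2m$. Because $x$ maps $E=x^{-1}(K)$ into $K$ (onto $K$, in fact, since $x:X\to\widehat\C$ is surjective), $\|F\|_E=\|Q_m\|_K=t_m^{\C}(K)$. This gives $t_{2m}(x^{-1}(K),0)\le t_m^{\C}(K)$.

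\emph{Lower bound.} Let $F$ be admissible of order $2m$. By \eqref{eq:elliptic-RR-basis}, $F=A(x)+yB(x)$ with $\deg A\le m$ and $\deg B\le m-2$ (only $B$ when $m\ge2$). The odd part $yB(x)$ has pole order at most $2m-1$, so the $z^{-2m}$ coefficient comes from $A$ alone. Since $x^m=z^{-2m}+O(z^{-2m+2})$, this means $A$ is monic of degree $m$. The set $E$ is invariant under $z\mapsto -z$ because $x$ is even, so \eqref{eq:two-sheet-average} gives
\[
\sup_E|A(x)|\le \|F\|_E .
\]
Surjectivity of $x|_E:E\to K$ then gives $\|A\|_K=\sup_E|A(x)|\le\|F\|_E$, hence $t_m^{\C}(K)\le \|A\|_K\le\|F\|_E$. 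Taking the infimum over $F$ yields the reverse inequality.

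Combining the two bounds proves \eqref{eq:trace-transfer-norm}, and $Q_m(x)$ attains the value, so it is an extremal of order $2m$. The only step that needs care is matching the leading coefficient: we must check that the odd part cannot contribute to the $z^{-2m}$ term, which is the parity/pole-order count above, and that $E$ is symmetric. Both follow from $x(-z)=x(z)$. Non-polarity of $K$ is used only to guarantee that $E$ is non-polar and that $Q_m$ exists.
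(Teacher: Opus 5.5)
Your proof is correct and follows the paper's argument: the even projection in \eqref{eq:two-sheet-average} gives $t_m^{\C}(K)\le\|A\|_K\le\|F\|_{x^{-1}(K)}$ for every admissible $F$ of order $2m$, and $Q_m(x)$ attains this bound. You also make explicit three points the paper leaves implicit, all correctly: $yB(x)$ has pole order at most $2m-1$, so $A$ must be monic of degree $m$; $x^{-1}(K)$ is symmetric under $z\mapsto -z$; and $x$ maps $x^{-1}(K)$ onto $K$.
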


\begin{proof}
Let \(F\) be any admissible function of order \(2m\).  By
\eqref{eq:elliptic-RR-basis}, its even part in
\eqref{eq:two-sheet-average} is \(A(x)\), where \(A\) is monic of degree \(m\).
Hence
\[
        t_m^{\C}(K)\le \|A\|_K
        =\|A(x)\|_{x^{-1}(K)}
        \le\|F\|_{x^{-1}(K)}.
\]
Taking the infimum over \(F\) gives one inequality.  Conversely, \(Q_m(x)\) is
admissible of order \(2m\), and
\[
        \|Q_m(x)\|_{x^{-1}(K)}=\|Q_m\|_K=t_m^{\C}(K).
\]
This gives the reverse inequality.
\end{proof}

This is the degree-two instance of averaging over all points in a fibre.  
Polynomial inverse-image identities for extremal norms have a substantial
literature: see Kamo and Borodin \cite{KamoBorodin1994}, Peherstorfer and
Steinbauer \cite{PeherstorferSteinbauer2001}, and Christiansen, Simon, and
Zinchenko \cite[Theorem~6.1]{CSZ3}.  The weighted
version is Theorem~2.4 of Novello, Schiefermayr, and Zinchenko
\cite{NovelloSchiefermayrZinchenko2021}.  Proposition
\ref{prop:trace-transfer} is the same averaging idea written in the concrete
two-sheeted coordinates of this torus.

The norm identity also gives
\begin{equation}\label{eq:finite-map-capacity}
        \capc_X(x^{-1}(K),0)=\capc_{\C}(K)^{1/2},
\end{equation}
\begin{equation}\label{eq:finite-map-Widom}
        W_{2m}^X(x^{-1}(K),0)=W_m^{\C}(K).
\end{equation}
Indeed, take \(2m\)-th roots in \eqref{eq:trace-transfer-norm} and use the
planar root asymptotic together with Theorem~\ref{thm:main-root}.

When \(K\) is regular, this can be checked directly.  Let \(g_K\) be the planar
Green function with pole at infinity, extended by zero to \(K\), then
\begin{equation}\label{eq:finite-map-green}
        g_{x^{-1}(K)}(z)=\frac12 g_K(x(z)).
\end{equation}
The right-hand side is harmonic off \(x^{-1}(K)\cup\{0\}\), vanishes on the
compact set, and near \(0\) has the expansion
\[
        \frac12g_K(x(z))
        =-\log|z|-\frac12\log\capc_{\C}(K)+o(1).
\]
Uniqueness of the Green function proves \eqref{eq:finite-map-green} and again
gives \eqref{eq:finite-map-capacity}; compare the classical polynomial preimage
formula in \cite[Theorem~5.2.5]{Ransford}.

\subsection{Example 1: the inverse image of a disk}

Fix \(0<R<e\), and let
\begin{equation}\label{eq:canonical-elliptic-disk}
        E_R:=\{z\in X:|x(z)|\le R\}.
\end{equation}
The planar disk contains exactly one branch value of \(x\), namely \(0\), and
its boundary contains none.  The monodromy around \(0\) interchanges the two
sheets, so \(E_R\) and its boundary are both connected.  The 
Riemann--Hurwitz formula gives
\[
        \chi(E_R)=2\chi(\overline{\mathbb D})-1=1.
\]
Thus \(E_R\) is an embedded closed disk.  Since \(|w|=R\) contains no critical
value, its boundary is real analytic.  This example satisfies the hypotheses
of Assumption \ref{ass1} with one boundary component.

\subsubsection*{Even orders}

The monic planar extremal of degree \(m\) on \(|w|\le R\) is \(w^m\).
Proposition~\ref{prop:trace-transfer} gives
\begin{equation}\label{eq:elliptic-disk-even-extremal}
        T_{2m}^{(R)}(z)=x(z)^m=\wp(z)^m,
        \qquad
        t_{2m}(E_R,0)=R^m.
\end{equation}
The only zero is \((1+i)/2\), with multiplicity \(2m\).  Square-lattice
rotation gives
\[
        x(iz)=-x(z),
        \qquad
        T_{2m}^{(R)}(iz)=(-1)^mT_{2m}^{(R)}(z).
\]
Moreover,
\begin{equation}\label{eq:elliptic-disk-green-capacity}
        g_{E_R}(z)=\frac12\log\frac{|x(z)|}{R}
        \quad(z\in X\setminus E_R),
        \qquad
        \capc(E_R)=R^{1/2},
        \qquad
        W_{2m}(E_R,0)=1.
\end{equation}

The extremal is unique.  The exterior \(\{|x|>R\}\) is connected because it
contains the unique pole of \(x\) (see \S10.5 for more details).  If \(F\) were another minimizer, then
\(F/x^m\) would be holomorphic on the exterior, extend to \(0\) with value one,
and have boundary modulus at most one.  The maximum-modulus principle forces
\(F/x^m\equiv1\).

The same formula displays the Green function's monodromy.  On the universal cover of the
exterior take
\begin{equation}\label{eq:elliptic-disk-Phi}
        \Phi_{E_R}=\left(\frac{x}{R}\right)^{1/2}.
\end{equation}
Its character has order two, since \(\Phi_{E_R}^2=x/R\) is single-valued, but it
is not trivial.  Indeed,
\[
        (x)=2[(1+i)/2]-2[0].
\]
A global square root of \(x\) would have one simple zero and one simple pole and
would define a degree-one meromorphic map from a genus-one surface to the
sphere, which is impossible.  An exterior square root is impossible for the
same reason: \(E_R\) is a disk containing the unique double zero of \(x\), so
\(x\) has a holomorphic square root inside.  Since the boundary is connected,
an exterior root could, after changing its sign, be glued to the interior root.

Thus
\begin{equation}\label{eq:elliptic-disk-Widom-section}
\begin{split}
        \capc(E_R)^{-2m}T_{2m}^{(R)}\Phi_{E_R}^{-2m}
        &=R^{-m}x^m\left(\frac{x}{R}\right)^{-m}\\
        &=1.
\end{split}
\end{equation}
The maximum principle gives \(\mu(L)\ge1\) for every normalized Widom class, so
\begin{equation}\label{eq:elliptic-disk-mu}
        \mu(L_{E_R}^{-2m})=W_{2m}(E_R,0)=1.
\end{equation}

\subsubsection*{Odd orders}

If \(F\) is admissible of order \(2m+3\), its odd part in
\eqref{eq:two-sheet-average} is \(yq(x)\), where \(q\) is monic of degree \(m\).
This projection does not increase the norm, and every \(yq(x)\) is admissible.
Hence
\begin{equation}\label{eq:odd-weighted-reduction}
        t_{2m+3}(x^{-1}(K),0)
        =\inf_{\substack{q\text{ monic}\\ \deg q=m}}
          \max_{w\in K}|w(w^2-e^2)|^{1/2}|q(w)|.
\end{equation}
Thus odd surface orders become weighted planar problems.  The weight is not
inserted by hand: it is \(|y|\), obtained from
\eqref{eq:lemniscatic-curve}.

For the disk, the first two odd cases are elementary:
\begin{equation}\label{eq:elliptic-disk-odd-examples}
\begin{array}{c|c|c}
N&T_N^{(R)}&t_N(E_R,0)\\ \hline
3&y&\sqrt{R(R^2+e^2)}\\[2mm]
5&xy&R^{3/2}\sqrt{R^2+e^2}.
\end{array}
\end{equation}
For \(N=3\), there is no polynomial \(q\) to choose.  For \(N=5\), write
\(q(w)=w+c\).  The disk and the weight are invariant under \(w\mapsto-w\), so
averaging \(q(w)\) with \(-q(-w)\) replaces it by \(w\) without increasing the
weighted norm.

It remains to verify the norms.  The maximum-modulus principle puts the maximum
on \(|w|=R\), and there
\[
        |w^2-e^2|^2
        =R^4+e^4-2R^2e^2\cos(2\arg w)
        \le (R^2+e^2)^2.
\]
Equality holds at \(w=\pm iR\).  Therefore
\[
        \max_{|w|\le R}|w(w^2-e^2)|=R(R^2+e^2),
\]
which gives the norm of \(y\).  For \(xy\), observe that
\[
        |xy|^2=|w^3(w^2-e^2)|.
\]
The maximum-modulus principle again puts the maximum on \(|w|=R\); there the
additional factor is \(R^2\).  This gives the displayed norm of \(xy\).  Since
\(\capc(E_R)=R^{1/2}\),
\begin{equation}\label{eq:elliptic-disk-low-odd-Widom}
        W_3(E_R,0)=W_5(E_R,0)
        =\frac{\sqrt{R^2+e^2}}{R}>1.
\end{equation}
Thus every even Widom factor is one, whereas the first two odd factors are
strictly larger than one.

For completeness, the root asymptotic can be checked directly.  Let \(\tau_m\)
denote the right-hand side of \eqref{eq:odd-weighted-reduction} for the disk.
The weight is bounded by a constant \(C_R\) on the disk, so the trial polynomial
\(q(w)=w^m\) gives \(\tau_m\le C_RR^m\), and hence
\[
        \limsup_{m\to\infty}\tau_m^{1/m}\le R.
\]
Conversely, for \(0<r<R\), we have by Cauchy's theorem
\(\max_{|w|=r}|q(w)|\ge r^m\), while
\[
        \min_{|w|=r}|w(w^2-e^2)|^{1/2}
        \ge\sqrt{r(e^2-r^2)}.
\]
Thus \(\liminf\tau_m^{1/m}\ge r\).  Letting \(r\uparrow R\) gives
\(\tau_m^{1/m}\to R\), and the even and odd subsequences together yield
\[
        \lim_{N\to\infty}t_N(E_R,0)^{1/N}
        =R^{1/2}=\capc(E_R).
\]

\subsection{Example 2: the inverse image of an interval}

Let
\[
        K_I=[2e,3e],
        \qquad
        E_I:=x^{-1}(K_I).
\]
The interval misses all three finite branch values, so \(E_I\) is the disjoint
union of two analytic arcs exchanged by \(z\mapsto-z\).

The capacity of \(K_I\) is \(e/4\).  Its monic Chebyshev polynomial of degree
\(m\) is
\[
        Q_m(w)=2\left(\frac e4\right)^m
        T_m\left(\frac{2w}{e}-5\right),
\]
where \(T_m(\cos\theta)=\cos(m\theta)\).  Since the leading coefficient of
\(T_m\) is \(2^{m-1}\), the coefficient of \(w^m\) here is
\[
        2\left(\frac e4\right)^m2^{m-1}
        \left(\frac2e\right)^m=1.
\]
Proposition~\ref{prop:trace-transfer} gives
\begin{equation}\label{eq:elliptic-interval-extremal}
        T_{2m}^{I}(z)
        =2\left(\frac e4\right)^m
        T_m\left(\frac{2\wp(z)}e-5\right),
\end{equation}
\begin{equation}\label{eq:elliptic-interval-values}
        t_{2m}(E_I,0)=2\left(\frac e4\right)^m,
        \qquad
        \capc(E_I)=\sqrt{\frac e4},
        \qquad
        W_{2m}(E_I,0)=2.
\end{equation}
In particular,
\[
        t_{2m}(E_I,0)^{1/(2m)}
        =2^{1/(2m)}\sqrt{\frac e4}
        \longrightarrow\capc(E_I).
\]

The lifted function retains the classical zeros and alternating values.  Its
zeros are the two inverse images of each point
\[
        \frac{5e}{2}+\frac e2
        \cos\frac{(2j-1)\pi}{2m},
        \qquad j=1,\ldots,m,
\]
and all \(2m\) zeros are simple.  At both inverse images of
\[
        \frac{5e}{2}+\frac e2\cos\frac{j\pi}{m},
        \qquad j=0,\ldots,m,
\]
the value is \(2(e/4)^m(-1)^j\).

We also check uniqueness on the surface.  Suppose \(F\) is another minimizer
and set \(M=2(e/4)^m\).  Its even projection is the unique planar interval
extremal.  At an alternation point \(w_j\), with inverse images \(p_j,-p_j\),
\[
        \frac{F(p_j)+F(-p_j)}2=Q_m(w_j)=(-1)^jM,
        \qquad |F(\pm p_j)|\le M.
\]
Equality in the triangle inequality forces
\(F(p_j)=F(-p_j)=(-1)^jM\).  Thus \(F-T_{2m}^{I}\) has at least
\(2(m+1)\) distinct zeros.  Its leading term has cancelled, so it lies in
\(L((2m-1)\cdot0)\); a nonzero function there has at most \(2m-1\) zeros.
This contradiction proves uniqueness.

For a general real interval \(K=[A,B]\), \(A<B\), the same calculation gives
\begin{equation}\label{eq:general-elliptic-interval}
        T_{2m}(z)
        =\frac{(B-A)^m}{2^{2m-1}}
        T_m\left(\frac{2\wp(z)-A-B}{B-A}\right),
\end{equation}
\[
        t_{2m}=2\left(\frac{B-A}{4}\right)^m,
        \qquad
        \capc\bigl(\wp^{-1}([A,B])\bigr)
        =\sqrt{\frac{B-A}{4}}.
\]
If \(c=(A+B)/2\) and \(r=(B-A)/2\), the first three functions are
\[
\begin{split}
        T_2&=x-c,\\
        T_4&=(x-c)^2-\frac{r^2}{2},\\
        T_6&=(x-c)^3-\frac{3r^2}{4}(x-c).
\end{split}
\]
These formulas directly check the leading coefficient and scaling in
\eqref{eq:general-elliptic-interval}.

\subsection{Example 3: lemniscates of \texorpdfstring{\(\wp'\)}{wp'}}

We first record the elementary argument.  Let \(H\) be any admissible function
of order \(d\), and put
\[
        E_{H,R}:=\{p\in X:|H(p)|\le R\},\qquad R>0.
\]
The exterior \(\Omega_R=\{|H|>R\}\) is connected.  Indeed, the restriction of
\(H\) to each component of
\(H^{-1}(\{|w|>R\}\cup\{\infty\})\) is a proper open map and hence maps that
component onto \(\{|w|>R\}\cup\{\infty\}\).  Every component would therefore
contain a pole, whereas \(0\) is the unique pole of \(H\).

If \(F\) is admissible of order \(dm\), then \(F/H^m\) is holomorphic on
\(\Omega_R\) and extends to \(0\) with value one.  On the boundary,
\[
        \left|\frac{F}{H^m}\right|
        \le\frac{\|F\|_{E_{H,R}}}{R^m}.
\]
The maximum-modulus principle gives \(\|F\|_{E_{H,R}}\ge R^m\).  Equality is
attained by \(H^m\), and equality in the maximum principle proves uniqueness.
Moreover,
\[
        g_{E_{H,R}}=\frac1d\log\frac{|H|}{R}
        \quad\text{on }\Omega_R.
\]
Comparison at \(0\) proves the following formulas.

\begin{corollary}
\label{cor:surface-lemniscates}
For every \(m\ge1\), \(H^m\) is the unique order-\(dm\) Chebyshev extremal on
\(E_{H,R}\), and
\begin{equation}\label{eq:lemniscate-values}
        t_{dm}(E_{H,R},0)=R^m,
        \qquad
        \capc(E_{H,R})=R^{1/d},
        \qquad
        W_{dm}(E_{H,R},0)=1.
\end{equation}
\end{corollary}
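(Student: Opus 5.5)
The plan is to assemble the three pieces of the elementary argument given just before the statement: connectedness of $\Omega_R$, the maximum-modulus comparison with $H^m$, and identification of the Green function.

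\emph{Step 1: the exterior is connected.} Set $\Omega_R=\{|H|>R\}\cup\{0\}$. Since $H$ is admissible of order $d$, its only pole is at $0$, and there it has order exactly $d$. The restriction of $H$ to any component of $H^{-1}(\{|w|>R\}\cup\{\infty\})$ is proper and open, hence onto, so every component contains a pole. There is only one pole, so $\Omega_R$ is connected.

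\emph{Step 2: the lower bound and the extremal.} Let $F$ be admissible of order $dm$. The quotient $F/H^m$ is holomorphic on $\Omega_R\setminus\{0\}$. Both functions have leading coefficient $\zeta^{-dm}$ at $0$, so the quotient extends with value $1$ at $0$. On $\partial\Omega_R\subset\{|H|=R\}$ its modulus is at most $\|F\|_{E_{H,R}}/R^m$. The maximum principle on the connected set $\Omega_R$ then gives
\[
1\le \|F\|_{E_{H,R}}R^{-m},
\]
that is, $t_{dm}(E_{H,R},0)\ge R^m$. On the other hand $H^m$ is admissible of order $dm$ with $\|H^m\|_{E_{H,R}}=R^m$, so the bound is attained and $t_{dm}=R^m$.

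\emph{Step 3: uniqueness.} If $F$ is also a minimizer, then $|F/H^m|\le1$ on the boundary and $F/H^m$ equals $1$ at the interior point $0$. The strong maximum principle forces $F/H^m\equiv1$ on $\Omega_R$. By the identity theorem, $F=H^m$ on all of $X$.

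\emph{Step 4: Green function and capacity.} Set $u:=\frac1d\log(|H|/R)$ on $\Omega_R$ and $u:=0$ on $E_{H,R}$. Then:
\begin{itemize}
\item $u$ is continuous, nonnegative, and vanishes on $E_{H,R}$;
\item $u$ is harmonic on $\Omega_R\setminus\{0\}$;
\item near $0$, $u=-\log|\zeta|-\frac1d\log R+o(1)$, using $H=\zeta^{-d}(1+O(\zeta))$.
\end{itemize}
Let $g$ be the Green function $g_{E_{H,R}}$. The difference $u-g$ is bounded and harmonic on $\Omega_R$ (the singularity at $0$ is removable), and it vanishes quasi-everywhere on the boundary. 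By the extended maximum principle, $u=g$.

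Comparing the expansion of $u$ at $0$ with \eqref{eq:gE-at-Pinfty} gives $\log\capc(E_{H,R})=\frac1d\log R$, so $\capc(E_{H,R})=R^{1/d}$. Hence $W_{dm}=R^m/(R^{1/d})^{dm}=1$.

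The only step needing care is Step 4. The set $E_{H,R}$ may have several components, and its boundary may contain critical points of $H$. Uniqueness of the Green function therefore has to be invoked through the extended maximum principle rather than classical boundary regularity. The level set $\{|H|=R\}$ is a finite union of analytic arcs, so $E_{H,R}$ is regular, and this causes no real difficulty.
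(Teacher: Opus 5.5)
Your proposal is correct and follows the paper's argument step for step: connectedness of $\{|H|>R\}$ via the proper-map argument, the maximum-modulus comparison of $F/H^m$ (normalized to $1$ at $0$), uniqueness from equality in the maximum principle, and the identification $g_{E_{H,R}}=\frac1d\log(|H|/R)$, compared with \eqref{eq:gE-at-Pinfty} at $0$ to obtain the capacity. The only difference is that you justify the Green-function identification through the extended maximum principle and the regularity of $E_{H,R}$, whereas the paper simply asserts it.
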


This is the surface version of the classical solid-lemniscate calculation; the
planar equality cases are discussed in
\cite[Theorem~1.2 and Corollary~3.5]{CSZ3}.

Now take
\[
        H=y=-\frac12\wp'=z^{-3}+O(z).
\]
It is a degree-three meromorphic map with the unique pole \(3[0]\).  For
\[
        E_R^{(3)}:=\{z\in X:|y(z)|\le R\},
\]
Corollary~\ref{cor:surface-lemniscates} gives
\begin{equation}\label{eq:wp-prime-family}
        T_{3m}^{(3)}(z)=y(z)^m
        =\left(-\frac12\wp'(z)\right)^m,
\end{equation}
\begin{equation}\label{eq:wp-prime-values}
        t_{3m}(E_R^{(3)},0)=R^m,
        \qquad
        \capc(E_R^{(3)})=R^{1/3},
        \qquad
        W_{3m}(E_R^{(3)},0)=1.
\end{equation}

The zeros of \(y\) are the three nonzero half-periods, so the zero divisor of
\(y^m\) contains each with multiplicity \(m\).  Also
\[
        y(iz)=iy(z),
        \qquad
        T_{3m}^{(3)}(iz)=i^mT_{3m}^{(3)}(z).
\]
The zeros are simple.  Indeed,
\[
        y'=-\frac12\wp''=-3x^2+e^2,
\]
so \(y'=e^2\) at \(x=0\) and \(y'=-2e^2\) at \(x=\pm e\).  The
inverse-function theorem now shows that, for every sufficiently small \(R>0\),
\(E_R^{(3)}\) is the disjoint union of three embedded analytic closed disks.
This fixed family contains odd pole orders whenever \(m\) is odd.

There is an exact lemniscate example at every admissible order.  With \(H_N\)
as in \eqref{eq:canonical-elliptic-monomials}, set
\[
        E_{N,R}:=\{|H_N|\le R\}.
\]
Then \(H_N\) is the unique order-\(N\) extremal on \(E_{N,R}\), and
\[
        t_N(E_{N,R},0)=R,
        \qquad
        \capc(E_{N,R})=R^{1/N},
        \qquad
        W_N(E_{N,R},0)=1.
\]
If \(|w|=R\) contains no critical value of \(H_N\), then
\(\partial E_{N,R}\) is real analytic.  Here the set varies with \(N\).
\subsection*{Acknowledgements}
I thank Benedikt Buchecker and Aron Wennman for helpful discussions.
Sampad Lahiry acknowledges financial support from the International Research Training Group (IRTG) between KU Leuven and University of Melbourne and the  Melbourne Research Scholarship of University of Melbourne.

\end{document}